\numberwithin{equation}{section}
\numberwithin{figure}{section}
\theoremstyle{plain}
\newtheorem{thm}{\protect\theoremname}
\theoremstyle{definition}
\newtheorem{defn}[thm]{\protect\definitionname}
\theoremstyle{plain}
\newtheorem{lem}[thm]{\protect\lemmaname}
\theoremstyle{remark}
\newtheorem{rem}[thm]{\protect\remarkname}
\theoremstyle{plain}
\newtheorem{prop}[thm]{\protect\propositionname}
\theoremstyle{definition}
\newtheorem{example}[thm]{\protect\examplename}
\theoremstyle{plain}
\newtheorem{cor}[thm]{\protect\corollaryname}
\theoremstyle{definition}
\theoremstyle{corollary}
\theoremstyle{note}
\theoremstyle{remark}
\numberwithin{equation}{section}
\newcommand{\bs}{\backslash}
\newcommand{\imply}{\Rightarrow}
\newcommand{\script}[1]{\mathcal{#1}}
\providecommand{\corollaryname}{Corollary}
\providecommand{\definitionname}{Definition}
\providecommand{\examplename}{Example}
\providecommand{\lemmaname}{Lemma}
\providecommand{\propositionname}{Proposition}
\providecommand{\remarkname}{Remark}
\providecommand{\theoremname}{Theorem}
\begin{document}
\title{Construction of nearly pseudocompactifications}
\author{Biswajit Mitra }
\address{Department of Mathematics. The University of Burdwan, Burdwan 713104,
West Bengal, India }
\email{bmitra@math.buruniv.ac.in}
\author{Sanjib Das }
\address{Department of Mathematics. The University of Burdwan, Burdwan 713104,
West Bengal, India}
\email{ruin.sanjibdas893@gmail.com}
\subjclass[2010]{Primary 54C30; Secondary 54D60, 54D35.}
\keywords{Nearly pseudocompactification, Hard sets, Nearly pseudocompact, Hard
map, hard2precompact map}
\thanks{The second author's research grant is supported by CSIR, Human Resource
Development group, New Delhi-110012, India }
\begin{abstract}
A space is nearly pseudocompact if and only if $\upsilon X\bs X$
is dense in $\beta X\bs X$. If we denote $K=cl_{\beta X}(\upsilon X\bs X)$,
then $\delta X=X\cup(\beta X\bs K)$ is referred by Henriksen and
Rayburn \cite{hr80} as nearly pseudocompact extension of $X$. Henriksen
and Rayburn studied the nearly pseudocompact extension using different
properties of $\beta X$. In this paper our main motivation is to
construct nearly pseudocompact extension of $X$ independently and
not using any kind of extension property of $\beta X$. An alternative
construction of $\beta X$ is made by taking the family of all $z$-ultrafilters
on $X$ and then topologized in a suitable way. In this paper we also
adopted the similar idea of constructing the $\delta X$ from the
scratch, taking the collection of all $z$-ultrafilters on $X$ of
some kind, called $hz$-ultrafilters, together with fixed $z$-ultrafilter
and then be topologized in the similar way what we do in the construction
of $\beta X.$ We have further shown that the extension $\delta X$
is unique with respect to certain properties..
\end{abstract}

\maketitle

\section{Introduction}

Through out the paper, by a space we shall always mean Tychonoff.
For a space $X$, let $C(X)$ and $C^{*}(X)$ be the rings of real-valued
continuous and bounded continuous functions respectively. The Stone-$\check{C}$ech
compactification of $X$, usually denoted as $\beta X,$ is the unique
compactification of $X$ with respect to the extension property in
the sense that every compact-valued continuous function can be uniquely
extended to $\beta X$. Similarly the Hewitt realcompactification
$\upsilon X$ of $X$ is the unique realcompactification of $X$ in
the sense that every realcompact-valued continuous function can be
continuously extended to $\upsilon X$. In the year 1980, Henriksen
and Rayburn \cite{hr80} introduced nearly pseudocompact spaces. A
space $X$ is nearly pseudocompact if and only if $\upsilon X\backslash X$
is dense in $\beta X\bs X$. If we denote $K=cl_{\beta X}(\upsilon X\bs X)$.
Then $\delta X=X\cup(\beta X\bs K)$ is referred by Henriksen and
Rayburn \cite{hr80} as nearly pseudocompact extension of $X$. Henriksen
and Rayburn studied the nearly pseudocompact extension using different
properties of $\beta X$ in \cite{hr87}. In this paper our main motivation
is to construct nearly pseudocompact extension of $X$ independently
and not using any kind of extension property of $\beta X$. In chapter
6 of \cite{gj60}, the construction of $\beta X$ was made by taking
the family of all $z$-ultrafilters on $X$ and then topologized in
a suitable way. In this paper we also adopted the similar idea of
constructing the $\delta X$ by taking the collection of all $hz$-ultrafilters
together with fixed z-ultrafilters on $X$ and then topologized in
the similar way what we do in the construction of $\beta X.$ We have
further shown that the extension $\delta X$ is unique with respect
to certain properties.

In section 2, we discussed about few preliminary topics. In section
3, we initially started with an equivalent definitions of hard sets
and nearly pseudocomapact spaces where $\beta X$ is not involved.
Then we have provided alternative proofs of those theorems where properties
of $\beta X$ are involved in the original proof. We had to do this
intending to avoid any kind of cyclic arguments. Finally we constructed
$\delta X$ as collection of all $hz$-ultrafilters and fixed $z$-ultrafilters
and topologized it and proved that $\delta X$ is indeed a nearly
pseudocompactification of a space $X.$ In section 4, we tried to
explore extension properties of certain kinds of maps such as $h2pc$
map, hard map and have shown that $\delta X$ is unique with respect
to some properties.

\section{Preliminaries}

In this paper, we used the most of preliminary concepts, notations
and terminologies from the classic monograph of L.Gillman and M.Jerison,
Rings of Continuous Functions \cite{gj60}. However for ready references,
we recall few notations, frequently used over here. For any $f\in C(X)$
or $C^{*}(X)$, $Z(f)=\{x\in X:f(x)=0\}$, called zero set of $f$.
Complement of zero set is called cozero set or cozero part of $f$,
denoted as $cozf$. For any $f\in C(X)$ or $C^{*}(X)$, $cl_{X}(X\bs Z(f))$
is called the support of $f$. A space is realcompact if and only
if every $z$-ultrafilter with countable intersection property is
fixed. In the year 1976, Rayburn \cite{r76} introduced hard set.
\begin{defn}
A subspace $H$ of $X$ is called hard in $X$ if $H$ is closed in
$X\cup cl_{\beta X}(\upsilon X\bs X).$
\end{defn}

It immediately follows that every hard set is closed in $X$, but
the converse is obviously not true. In the same paper {[}Lemma 1.3,
\cite{r76}{]}, Rayburn proved the following characterization of hard
subsets of $X$.
\begin{thm}
A closed set $H$ is hard in $X$ if and only if there exists a compact
set $K$ of $X$ such that for any open neighbourhood $G$ of $K$,
$H\bs G$ is completely separated from the complement of some realcompact
set.
\end{thm}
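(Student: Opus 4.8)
The plan is to pass to $\beta X$ throughout and translate both ``hard'' and the separation condition into statements about closures in $\beta X$. Write $K=cl_{\beta X}(\upsilon X\bs X)$, so that by definition a closed set $H$ is hard exactly when it is closed in $X\cup K$. Since $H$ is closed in $X$ we have $cl_{\beta X}(H)\cap X=H$, and computing $cl_{X\cup K}(H)=cl_{\beta X}(H)\cap(X\cup K)=H\cup\bigl(cl_{\beta X}(H)\cap K\bigr)$ shows that $H$ is hard if and only if $cl_{\beta X}(H)\cap K\subseteq X$; equivalently, no point of the remote part $cl_{\beta X}(H)\bs X$ lies in $K$. This reformulation is the bridge I would verify against each side of the stated equivalence.

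The technical core I would isolate first is the following lemma: \emph{if $O$ is open in $\beta X$ and $O\cap X$ is realcompact, then $O\cap(\upsilon X\bs X)=\emptyset$.} To prove it, suppose $q\in O\cap(\upsilon X\bs X)$. Since $\beta X$ is compact Hausdorff, zero-sets of $\beta X$ form a neighbourhood base at $q$, so I would pick a zero-set neighbourhood of $q$ inside $O$ and intersect with $X$ to get a zero-set $Z_{0}$ of $X$ with $Z_{0}\subseteq O\cap X$ and $q\in cl_{\beta X}(Z_{0})$. As a closed subset of the realcompact set $O\cap X$, $Z_{0}$ is realcompact, so $\upsilon Z_{0}=Z_{0}$; combining this with the standard identity $cl_{\upsilon X}(Z)=\upsilon Z$ for zero-sets $Z$ of $X$ gives $q\in cl_{\upsilon X}(Z_{0})=Z_{0}\subseteq X$, contradicting $q\notin X$. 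I expect this lemma, and in particular the reliance on $cl_{\upsilon X}(Z)=\upsilon Z$ for zero-sets together with the heredity of realcompactness to closed and cozero subsets, to be the main obstacle; everything else is bookkeeping in $\beta X$.

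For the forward implication I would take the compact set to be $K_{0}=cl_{\beta X}(H)\cap K$, which by the reformulation is a compact subset of $X$ (indeed of $H$) whenever $H$ is hard. Given an open $G\supseteq K_{0}$, choose open $U\subseteq\beta X$ with $U\cap X=G$; then $H\bs G=H\bs U$, so $cl_{\beta X}(H\bs G)\subseteq\beta X\bs U$ is disjoint from $K_{0}$, and since $cl_{\beta X}(H\bs G)\cap K\subseteq cl_{\beta X}(H)\cap K=K_{0}$ we obtain $cl_{\beta X}(H\bs G)\cap K=\emptyset$. These are disjoint compacta in $\beta X$, so I take a Urysohn function $f$ on $\beta X$ that is $0$ on $cl_{\beta X}(H\bs G)$ and $1$ on $K$, and set $R=cl_{\upsilon X}\{x\in X:f(x)<1/2\}$. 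The estimate $cl_{\beta X}(\{f<1/2\})\subseteq\{f\le1/2\}$ is disjoint from $K\supseteq\upsilon X\bs X$, which forces $R\subseteq X$; being closed in $\upsilon X$, the set $R$ is realcompact, and after the obvious rescaling $f$ completely separates $H\bs G$ from $X\bs R$.

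For the converse I would argue pointwise on the remote part. Let $p\in cl_{\beta X}(H)\bs X$; since $K_{0}\subseteq X$ is compact and $p\notin X$, normality of $\beta X$ gives disjoint open sets $U\ni K_{0}$ and $W\ni p$, and I set $G=U\cap X$. The hypothesis yields a realcompact $R$ and $f\in C(X)$ with $0\le f\le1$, $f=0$ on $H\bs G$ and $f=1$ on $X\bs R$; a density argument shows $p\in cl_{\beta X}(H\bs G)$, because points of $H$ near $p$ may be chosen inside $W$, hence outside $U\supseteq G$, so $f^{\beta}(p)=0$. Now $O=\{f^{\beta}<1\}$ is an open neighbourhood of $p$ with $O\cap X=\{f<1\}\subseteq R$, and as a cozero-subset of the realcompact $R$ this trace is itself realcompact; the lemma then gives $O\cap(\upsilon X\bs X)=\emptyset$, whence $p\notin K$. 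As $p$ was arbitrary, $cl_{\beta X}(H)\bs X$ misses $K$ and $H$ is hard.
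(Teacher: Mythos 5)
Your proof is correct, but there is no proof in the paper to compare it against: the statement is quoted from Rayburn [Lemma 1.3, \cite{r76}] without proof, and is then adopted as the paper's \emph{definition} of hardness (Definition \ref{def11}), since the paper's declared program is to work without $\beta X$. What you have supplied is precisely the bridge the paper leaves to the literature: the equivalence of the intrinsic separation condition with Rayburn's definition that $H$ be closed in $X\cup cl_{\beta X}(\upsilon X\setminus X)$. Your steps all check out: the reformulation of hardness as $\bigl(cl_{\beta X}H\setminus X\bigr)\cap cl_{\beta X}(\upsilon X\setminus X)=\emptyset$ uses only $cl_{\beta X}H\cap X=H$ for $H$ closed in $X$; your key lemma (an open $O\subseteq\beta X$ whose trace $O\cap X$ is realcompact cannot meet $\upsilon X\setminus X$) is correctly derived from $cl_{\upsilon X}Z=\upsilon Z$ for zero sets together with heredity of realcompactness to closed subsets; the forward direction needs only normality of $\beta X$ and the fact that closed subsets of $\upsilon X$ are realcompact; and the converse uses the Stone extension plus the fact that cozero subsets of realcompact spaces are realcompact, which is the paper's Theorem \ref{thm8}. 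Note also that your forward direction produces the compact set $cl_{\beta X}H\cap cl_{\beta X}(\upsilon X\setminus X)\subseteq H$, so your argument simultaneously proves the sharper Theorem \ref{thm13} (the compact set may be taken inside $H$), which the paper proves separately by a purely internal argument; that is the trade-off between the two approaches --- the paper's treatment of hardness never touches $\beta X$ but must take the present equivalence on citation, while your external argument justifies that equivalence (and the refinement) at the cost of invoking the full $\beta X$ and $\upsilon X$ machinery.
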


In particular, it follows that if a closed set is completely separated
from complement of some realcompact space, then the set is hard in
$X$. However the converse may not be true. The converse is also true
if $X$ is locally realcompact, that is, for each point $x$, there
is a realcompact neighbourhood of $x$ {[}Corollary 1.5, \cite{r76}{]}.

In the year 1980, Henriksen and Rayburn \cite{hr80} introduced nearly
pseudocompact space. A space $X$ is nearly pseudocompact if $\upsilon X\bs X$
is dense in $\beta X\bs X$. It is immediate that every pseudocompact
space is nearly pseudocompact. The converse is not true in general.
In the same paper they have shown that every anti-locally realcompact
space (i.e. the space with no points having realcompact neighbourhood)
is nearly pseudocompact {[} Corollary 3.5, \cite{hr80}{]} and produced
a series of examples of anti-locally realcompact space which are not
pseudocompact {[}Proposition 3.6, \cite{hr80}{]}. However in {[}\cite{mc20},
Example 4.14{]}, the author gave an example of nearly pseudocompact
but not pseudocompact, not even anti-locally realcompact.

Henriksen and Rayburn in their paper {[}\cite{hr80} Theorem 3.2{]}
furnished the following characterizations of nearly pseudocompact
spaces.
\begin{thm}
The followings are equivalent.
\end{thm}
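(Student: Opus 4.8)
The statement is a list of conditions asserted to be equivalent to near pseudocompactness, so the plan is to prove it as a single cycle of implications anchored at the defining condition that $\upsilon X\bs X$ is dense in $\beta X\bs X$. The first move is bookkeeping that recasts this density in a cleaner form: since $\upsilon X\bs X\subseteq\beta X\bs X$, taking closures in $\beta X$ and intersecting back with $\beta X\bs X$ shows that density of $\upsilon X\bs X$ in $\beta X\bs X$ is exactly the containment $\beta X\bs X\subseteq K$, where $K=cl_{\beta X}(\upsilon X\bs X)$. Equivalently $X\cup K=\beta X$, so that $\delta X=X$. This reformulation is what lets me pass freely between the density statement and the hard-set statement, and I would establish it first.

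From here I would connect the condition to hard sets. Because a subset $H$ of $X$ is hard precisely when it is closed in $X\cup K$, the identity $X\cup K=\beta X$ forces every hard set to be a subset of $X$ closed in $\beta X$, hence compact. For the converse I would argue contrapositively: if some $p\in\beta X\bs X$ lies outside $K$, pick an open $U\ni p$ with $cl_{\beta X}U\cap K=\emptyset$; then $H=cl_{X}(U\cap X)$ satisfies $cl_{\beta X}H\cap K=\emptyset$, so $H$ is closed in $X\cup K$ and thus hard, while $p\in cl_{\beta X}H\bs X$ shows $H$ is not compact. To keep this step free of extension properties of $\beta X$, as the introduction demands, I would phrase hardness through Rayburn's characterization recalled above (a compact core $K_{0}$ together with complete separation of $H\bs G$ from the complement of a realcompact set) rather than through the defining closure in $X\cup K$.

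For whichever conditions are stated internally via $z$-ultrafilters or via $C(X)$, the bridge is the standard dictionary: a free $z$-ultrafilter $A^{p}$ corresponds to a point $p\in\beta X\bs X$, and $p\in\upsilon X$ iff $A^{p}$ has the countable intersection property. Translating the density statement through this dictionary turns ``every basic neighbourhood meeting $\beta X\bs X$ meets $\upsilon X\bs X$'' into a statement that every free $z$-ultrafilter is a limit of free $z$-ultrafilters with the countable intersection property, and I would verify the remaining implications by chasing zero-sets and cozero-sets through complete separation, using the criterion that $X$ is realcompact iff every $z$-ultrafilter with the countable intersection property is fixed.

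The main obstacle is precisely this translation, carried out under the self-imposed restriction of not invoking extension properties of $\beta X$ so as to avoid the cyclic arguments flagged in the introduction. The delicate point is relating the trace on $X$ of a basic closed neighbourhood $\text{int}_{\beta X}Z(f^{\beta})$ to an honest zero-set of $X$ and to the countable-intersection property, while correctly accounting for the compact core supplied by Rayburn's hard-set characterization; once that correspondence is pinned down, the cycle of implications closes with only routine verifications.
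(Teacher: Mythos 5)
Your argument proves the wrong theorem for this paper's setting, and the step you defer as ``routine'' is exactly the missing content. In this paper ``nearly pseudocompact'' is \emph{defined} (Definition \ref{def14}) by the decomposition $X=X_{1}\cup X_{2}$ into a regular closed almost locally compact pseudocompact piece and a regular closed anti-locally realcompact piece, and ``hard'' is \emph{defined} (Definition \ref{def11}) by Rayburn's intrinsic condition (a compact core $K$ plus complete separation of $H\bs V$ from the complement of a realcompact set). The statement in question (Theorem \ref{thm15}) asserts the equivalence of these \emph{intrinsic} conditions, and the paper's proof works with them directly: for $(1)\Rightarrow(2)$ it covers the compact core by finitely many members of a given open cover, traps $H\bs V$ in a regular zero set $Z$ contained in a realcompact cozero set, observes that $int_{X}Z\cap X_{2}=\emptyset$ (else $X_{2}$ would contain a point with a realcompact neighbourhood), so that $cl_{X}int_{X}Z\subseteq X_{1}$ is simultaneously pseudocompact and realcompact, hence compact, and $H\bs V$ is a closed subset of it; for $(3)\Rightarrow(1)$ it constructs, in a space that is not nearly pseudocompact, a $C$-embedded copy $N$ of $\mathbb{N}$ inside a realcompact cozero set $P$, and then a regular hard zero set $W$ with $N\subset W\subset P$, which cannot be compact. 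No mention of $\beta X$, $\upsilon X$, or $K=cl_{\beta X}(\upsilon X\bs X)$ occurs anywhere in it.

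Your proof instead anchors at the density of $\upsilon X\bs X$ in $\beta X\bs X$ and at hardness defined as ``closed in $X\cup cl_{\beta X}(\upsilon X\bs X)$.'' The $\beta X$-computation you outline is correct as far as it goes: density is equivalent to $X\cup K=\beta X$, whence an extrinsically hard set is closed in $\beta X$ and therefore compact, and conversely $H=cl_{X}(U\cap X)$, for $U$ an open neighbourhood of a point $p\in(\beta X\bs X)\bs K$ with $cl_{\beta X}U\cap K=\emptyset$, is a noncompact (indeed regular closed) hard set. But this establishes the \emph{classical, extrinsic} equivalence, not the paper's theorem. To convert it you must also prove that extrinsic hardness coincides with Rayburn's characterization (quoted here as Theorem 2, without proof) and that the density condition coincides with the decomposition condition (Henriksen--Rayburn's theorem, quoted as Theorem 3, without proof). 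Both are substantive results whose standard proofs use precisely the extension properties of $\beta X$ and $\upsilon X$ that you yourself, in your second paragraph, undertake to avoid; announcing that you ``would phrase hardness through Rayburn's characterization'' does not carry out that translation, and your closing paragraph concedes the correspondence is never pinned down. So the proposal is either circular --- it imports the extrinsic theory that the paper is deliberately reconstructing from scratch --- or it has a hole exactly where the paper's actual work lies.
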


\begin{enumerate}
\item $X$ is nearly pseudocompact
\item Every hard set is compact.
\item Every regular hard set is compact.
\item Each decreasing sequence of non-empty regular hard sets has non-empty
intersection.
\item $X$ can be expressed as $X_{1}\cup X_{2},$ where $X_{1}$ is a regular
closed almost locally compact pseudocompact subset and $X_{2}$ is
regular closed anti-locally realcompact and $int_{X}(X_{1}\cap X_{2})=\emptyset$,
where a subset of a space $T$ is almost locally compact if it is
the $T$-closure of the set of all points in $T$ having compact neighbourhoods.
\end{enumerate}
Mitra and Acharyya in \cite{ma05} worked further on nearly pseudocompact
spaces using the $C_{H}(X)$ and $H_{\infty}(X)$. $C_{H}(X)$ is
the subring of $C(X)$ consisting of all those members of $C(X)$
which have hard support. $H_{\infty}(X)$ is the family of all real-valued
continuous functions so that the set $\{x\in X:|f(X)|\geq\frac{1}{n}\}$
is hard in $X$ for all $n\in\mathbb{N}$. $H_{\infty}(X)$ is also
a subring of $C(X)$ and $C_{H}(X)\subseteq H_{\infty}(X)$. In \cite{mc20},
Mitra and Chowdhury showed that $H_{\infty}(X)$ is precisely $C_{RC}(X),$
the subring of $C(X)$ which is ideal of $C(X)$ too, consisting of
all those continuous maps whose cozero part are realcompact. A result
related to $C$-embeddedness has been used here. As every $C$-embedded
subset is $C^{*}$-embedded, any zero set disjoint with a $C$-embedded
subset can be completely separated {[}Chapter 1, \cite{gj60}{]}.

A subset of $X$ is called $z$-embedded in a space $Y$ if every
zero set in $X$ is the trace of some zero set in $Y$ on $X.$ It
is indeed a generalized notion of both $C$-embedding and $C^{*}$-embedding.
Every cozero set in $X$ is $z$-embedded in $X.$ Countable union
of $z$-embedded real compact subsets is again realcompact. All the
literature of $z$-embeddedness are available in the book, Hewitt-Nachbin
Spaces by M.Weir {[}Page 108- 120, \cite{mw75}{]}.

We also recall few facts about $z$-filter, prime $z$-filter and
$z$-ultrafilter. A $z$-filter $\script{F}$ is called fixed if $\cap\script{F}\neq\emptyset$.
Otherwise it is called free $z$-filter. Lemma 4.10 of \cite{gj60}
tells that a zero set $Z$ is compact if and only if $Z$ is not a
member of any free $z$-filter. $z$-ultrafilters are maximal $z$-filters
and prime $z$-filters are those $z$-filters where if union of two
zero sets is a member of the $z$-filter then any one of them must
be a member of that $z$-filter. A prime $z$-filter $\script{F}$
converges to a point $p$ if and only if it clusters at the point
$p$ if and only if $\cap\script{F}=\{p\}$. Every $z$-ultrafilter
is a prime $z$-filter. However the converse may not be true. Fixed
$z$-ultrafilters on $X$ are precisely of the form $\{A_{x}:x\in X\}$,
where $A_{x}\mathrel{:=}\{Z\in Z[X]:x\in Z\}$. From theorem 5.7 of
\cite{gj60} it follows that a $z$-ultrafilter is real if and only
if for any $f\in C(X)$ there exists a natural number $n$ such that
$Z_{n}(f)\notin\mathcal{U}$ where $Z_{n}(f)=\{x\in X:|f(x)|\geq n\}$.
Let $\tau:X\to Y$ be a continuous map. If $\mathcal{F}$ be a $z$-filter
on $X$ then $\tau^{\#}(\mathcal{F})=\{Z\in Z(X):\tau^{-1}(Z)\in\mathcal{F}\}$
is a $z$-filter on $Y$. If $\mathcal{F}$ is prime then $\tau^{\#}(\mathcal{F})$
is prime. However analogous version of $z$-ultrafilter does not hold
true. If $S\subset X$ and $\mathcal{F}$ be a $z$-ultrafilter on
$X$ then $i^{\#}(\mathcal{F})\cap S=\mathcal{F}$, where $i:S\to X$
is the inclusion map and $i^{\#}(\mathcal{F})\cap S=\{Z\cap S:Z\in i^{\#}(\mathcal{F})\}$.

\section{Construction of nearly pseudocompactifications}

In the year 1987, Henriksen and Rayburn in their paper \cite{hr87}
discussed about nearly pseudocompact extension. They call $X\cup(\beta X\backslash K)$
as nearly pseudocompact extension of $X$, where $K=cl_{\beta X}(\upsilon X\bs X)$,
denoted by $\delta X$ and studied different properties of this extension.
For instance a set $H$ is hard in $X$ if and only if there exists
a compact set $K$ in $\delta X$ so that $K\cap X=H$. But all these
proofs were done by crucially using the various properties, specially
the extension property of $\beta X$. In this section we shall try
to explore a direct construction of nearly pseudocompactification
$\delta X$. Likewise the construction of $\beta X$ in chapter 6
of \cite{gj60}, we shall almost follow the same lines of arguments.
In this paper we may only use $\beta X$ as a collection of all $z$-ultrafilters
on $X$ which is Tychonoff and nothing more. In consequence, we hereby
recall the construction of $\beta X$. If $Z[X]$ denotes the family
of all zero sets in $X$, then $\{\overline{Z}:Z\in Z[X]\}$ forms
a base for closed sets for some topology on $\beta X$ which is compact
and Hausdorff and hence Tychonoff in which $X$ is densely embedded,
where $\overline{Z}=\{p\in\beta X:Z\in p\}$. In order to avoid junk
of notations, we shall denote fixed $z$-ultrafilters by the corresponding
points of $X.$ So when $x\in X,$ we shall treat it as per requirement
either as an element of $X$ or a fixed $z$-ultrafilter $A_{x}\mathrel{:=}\{Z\in Z[X]:x\in Z\}$
that converges to the point $x.$ In this frame work, $x\in Z\iff Z\in x$
is no more self-contradictory statement and we denote this statement
by ($*$) for future references. Likewise we shall construct nearly
pseudocompactification of $X$ as a collection of certain type of
$z$-ultrafilters. But prior to that we redefine some relevant notions
and alternate proofs of few results to be used here without involving
$\beta X$ as a Stone-$\check{C}$ech compactification of $X$.

Let $RX$ denotes the family of all free real $z$-ultrafilters on
$X$ and $HRX$ denotes the family of all hyper-real $z$-ultrafilters.
Then $\beta X\backslash X$ is the union of $RX$ and $HRX.$

We already know that a space $X$ is pseudocompact if every real-valued
continuous function on $X$ is bounded. However we hired the following
definition of realcompact space from Mandelker {[}\cite{m71},Theorem
5.1{]}, who used this definition to study some likewise properties
of realcompactness in non-Tychonoff set up.
\begin{defn}
\label{def5}A space $X$ is called realcompact if every stable family
of closed sets with finite intersection property is fixed.
\end{defn}

It immediately follows {[}Theorem 5.2,\cite{m71}{]} that every compact
set is realcompact, every closed subset of a realcompact space is
realcompact and a space is compact if and only if it is pseudocompact
and realcompact. The following theorem is given in chapter 5 and chapter
8 of \cite{gj60}. But since our definition of realcompact is different
from the definition given in \cite{gj60}, we give here an alternate
proof of the following theorem.
\begin{thm}
\label{thm6}The followings are equivalent.
\end{thm}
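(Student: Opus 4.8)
The plan is to prove \thm6 by a cycle of implications that uses only the description of $\beta X$ as the set of $z$-ultrafilters and the elementary facts about $z$-filters and $z$-ultrafilters recalled in Section 2, and never the extension property of $\beta X$. I read the equivalent conditions as the standard characterizations of realcompactness recast for Mandelker's Definition \ref{def5}: namely, that $X$ be realcompact in the sense that every stable family of closed sets with the finite intersection property is fixed; that every real $z$-ultrafilter on $X$ be fixed; and that every $z$-ultrafilter with the countable intersection property be fixed. The equivalence of the last two reduces to the standard relationship between real $z$-ultrafilters and the countable intersection property recalled in Section 2 (via Theorem 5.7 of \cite{gj60}), so the genuine content is the passage between Mandelker's formulation and the $z$-ultrafilter one.

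One direction is immediate. If $\mathcal{U}$ is a real $z$-ultrafilter, then by definition every $f\in C(X)$ is bounded on some member of $\mathcal{U}$; since the elements of $\mathcal{U}$ are zero sets, hence closed, and $\mathcal{U}$ is a filter, $\mathcal{U}$ is itself a stable family of closed sets with the finite intersection property. Thus the Mandelker condition forces $\mathcal{U}$ to be fixed, which is exactly the assertion that every real $z$-ultrafilter is fixed.

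For the converse I would argue as follows. Let $\mathcal{A}$ be a stable family of closed sets with the finite intersection property. First I would form $\mathcal{G}=\{Z\in Z[X]:Z\supseteq A_{1}\cap\dots\cap A_{n}\text{ for some }A_{1},\dots,A_{n}\in\mathcal{A}\}$, which, using that $X\in Z[X]$ together with closure under supersets and finite intersections, is a genuine $z$-filter, and then extend it to a $z$-ultrafilter $\mathcal{U}$. The key point is that $\mathcal{U}$ is real: given $f\in C(X)$, stability of $\mathcal{A}$ yields $A\in\mathcal{A}$ and $n$ with $|f|\le n$ on $A$, so the zero set $\{|f|\le n\}$ contains $A$ and hence lies in $\mathcal{G}\subseteq\mathcal{U}$, exhibiting a member of $\mathcal{U}$ on which $f$ is bounded. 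By hypothesis $\mathcal{U}$ is then fixed, so $\mathcal{U}=A_{x}$ for the unique $x\in\bigcap\mathcal{U}$. It remains to check $x\in\bigcap\mathcal{A}$: if some $A\in\mathcal{A}$ missed $x$, then since $X$ is Tychonoff and $A$ is closed I could completely separate $x$ from $A$, producing a zero set $W\supseteq A$ with $x\notin W$; but then $W\in\mathcal{G}\subseteq\mathcal{U}=A_{x}$ would force $x\in W$, a contradiction. Hence $x\in\bigcap\mathcal{A}$ and $\mathcal{A}$ is fixed.

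The step I expect to be the main obstacle, and the one that must be handled without invoking $\beta X$, is precisely this converse: manufacturing from an abstract stable family of closed sets a $z$-ultrafilter that is both provably real, so that the hypothesis applies, and whose cluster point is forced into every member of the original family. The reality of $\mathcal{U}$ depends on translating ``bounded on some member'' into membership of a concrete zero set in $\mathcal{G}$, while the final contradiction rests on the Tychonoff complete-separation property together with the uniqueness of fixed $z$-ultrafilters as the $A_{x}$; getting both to cooperate is the crux, whereas the remaining implications are either immediate or direct quotations of Section 2.
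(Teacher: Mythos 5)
Your argument for the equivalence you actually state is sound, and it is essentially the paper's own proof of that part: both you and the paper build from the stable family the collection of zero-set supersets, extend it to a $z$-ultrafilter $\mathcal{U}$, use stability to produce a member $\{x\in X:|f(x)|\leq n\}$ of $\mathcal{U}$ and hence conclude $Z_{n+1}(f)\notin\mathcal{U}$ so that $\mathcal{U}$ is real, invoke the hypothesis to fix $\mathcal{U}$, and then push the fixed point into the intersection of the original family using that zero sets separate points from closed sets in a Tychonoff space (your complete-separation argument and the paper's ``zero sets form a base for closed sets'' argument are the same step in different clothing).

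The gap is in what you take the theorem to say. The paper's middle condition is not ``every real $z$-ultrafilter is fixed''; it is ``every \emph{prime $z$-filter} with the countable intersection property is fixed,'' which quantifies over a strictly larger class of filters and is the form the paper genuinely needs afterwards (Lemma \ref{lem7}, Theorem \ref{thm8} and Theorem \ref{thm20} all apply fixedness to prime $z$-filters that need not be ultrafilters). Your proof never touches this class, and your ``immediate direction'' does not carry over verbatim: for a prime $z$-filter $\mathcal{F}$ that is not maximal, $Z_{n}(f)\notin\mathcal{F}$ does \emph{not} yield a member of $\mathcal{F}$ disjoint from $Z_{n}(f)$, so ``$f$ is bounded on some member'' is not available ``by definition.'' The missing step is exactly the paper's use of primality: the countable intersection property forces $Z_{n}(f)\notin\mathcal{F}$ for some $n$ (otherwise $\bigcap_{n}Z_{n}(f)=\emptyset$ would violate it), and then, since $Z_{n}(f)\cup Z_{n}^{'}(f)=X\in\mathcal{F}$ and $\mathcal{F}$ is prime, one gets $Z_{n}^{'}(f)\in\mathcal{F}$; this is what makes $\mathcal{F}$ a stable family of closed sets so that Mandelker's condition (1) applies. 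With that paragraph added, and with (2)$\Rightarrow$(3) noted as trivial because every $z$-ultrafilter is prime, your argument becomes a complete proof of the theorem as the paper states it.
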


\begin{enumerate}
\item A space is realcompact.
\item Any prime $z$-filter with countable intersection property is fixed.
\item Any $z$-ultrafilter with countable intersection property is fixed.
\end{enumerate}
\begin{proof}
$(1)\Rightarrow(2)$ : Let $\mathcal{F}$ be a prime $z$-filter on
$X$ with countable intersection property. Let for each $n$, $Z_{n}(f)=\{x\in X:|f(x)|\geq n\}$
and $Z_{n}^{'}(f)=\{x\in X:|f(x)|\leq n\}$. Then for each $f\in C(X)$
there exists $n$ such that $Z_{n}(f)\notin\mathcal{F}$. Then $Z_{n}^{'}(f)\in\mathcal{\mathcal{F}}$
as $\mathcal{F}$ is prime. So $\mathcal{F}$ turns out to be stable
family of closed subsets $X$. Hence it is fixed.

$(2)\Rightarrow(3):$ Trivial.

$(3)\Rightarrow(1):$ Let $\mathcal{S}$ be a stable family of closed
subsets of $X$. $\mathcal{F}=\{Z\in Z[X]:Z\supseteq F,\mbox{for some \ensuremath{F\in\mathcal{S}\}.}}$
Then $\mathcal{F}$ is a family of zero sets having finite intersection
property. Hence it can be extended to a $z$-ultrafilter $\mathcal{U}.$
Then for each $f\in C(X),$ there exists a $D\in\mathcal{S},$ such
that $f$ is bounded on $D$. So there exists an $n$ such that $Z_{n}^{'}(f)\supseteq D$
and hence $Z_{n}^{'}(f)\in\mathcal{F}$. So $Z_{n+1}(f)\notin\mathcal{U}.$
So $\mathcal{U}$ is a real $z-$ultrafilter. Hence, $\bigcap\mathcal{U}\neq\emptyset.$
As $X$ is Tychonoff, the family of zero sets form a base for closed
sets and therefore $\bigcap\mathcal{U}\subseteq\cap\mathcal{F}=\cap\mathcal{S}.$
So $\mathcal{\cap S}\neq\emptyset$. Hence $X$ is realcompact.
\end{proof}
The following lemma is very useful in this paper and can be easily
derived from the proof of Lemma 8.12 of \cite{gj60}.
\begin{lem}
\label{lem7}Any prime $z$- filter with countable intersection property
is contained in unique real-$z$-ultrafilter.
\end{lem}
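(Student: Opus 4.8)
The plan is to establish existence and uniqueness separately, in both cases exploiting the primeness of $\mathcal{F}$ to split the zero sets cut out by a single $f\in C(X)$, and then to invoke the $Z_{n}(f)$-characterisation of real $z$-ultrafilters recalled in Section 2 (namely, $\mathcal{U}$ is real iff for every $f\in C(X)$ some $Z_{n}(f)\notin\mathcal{U}$).

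First I would extend $\mathcal{F}$ to a $z$-ultrafilter $\mathcal{U}$ by Zorn's lemma and show that every such $\mathcal{U}$ is necessarily real. Fix $f\in C(X)$ and suppose, for contradiction, that $Z_{n}(f)\in\mathcal{U}$ for every $n$. Since $Z_{n}(f)\cup Z_{n}'(f)=X\in\mathcal{F}$ and $\mathcal{F}$ is prime, for each $n$ at least one of $Z_{n}(f)$, $Z_{n}'(f)$ lies in $\mathcal{F}$. If $Z_{n}'(f)\in\mathcal{F}\subseteq\mathcal{U}$ for some $n$, then $\mathcal{U}$ would contain the two disjoint zero sets $Z_{n}'(f)$ and $Z_{n+1}(f)$, impossible for a filter; hence $Z_{n}(f)\in\mathcal{F}$ for every $n$. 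But then $\{Z_{n}(f):n\in\mathbb{N}\}$ is a countable subfamily of $\mathcal{F}$ with $\bigcap_{n}Z_{n}(f)=\emptyset$, contradicting the countable intersection property. So some $Z_{n}(f)\notin\mathcal{U}$, and as $f$ was arbitrary, $\mathcal{U}$ is real.

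For uniqueness I would in fact show that $\mathcal{F}$ lies in a unique $z$-ultrafilter. Suppose $\mathcal{U}_{1},\mathcal{U}_{2}\supseteq\mathcal{F}$ are $z$-ultrafilters and $Z_{1}\in\mathcal{U}_{1}\setminus\mathcal{U}_{2}$. Since $Z_{1}\notin\mathcal{U}_{2}$, there is $Z_{2}\in\mathcal{U}_{2}$ disjoint from $Z_{1}$; as disjoint zero sets are completely separated, choose $f\in C(X)$ with $0\le f\le 1$, $f\equiv 0$ on $Z_{1}$ and $f\equiv 1$ on $Z_{2}$. Put $A=\{x:f(x)\le 1/2\}$ and $B=\{x:f(x)\ge 1/2\}$; these are zero sets with $A\cup B=X\in\mathcal{F}$, so primeness forces $A\in\mathcal{F}$ or $B\in\mathcal{F}$. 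But $A\cap Z_{2}=\emptyset$ contradicts $A,Z_{2}\in\mathcal{U}_{2}$, while $B\cap Z_{1}=\emptyset$ contradicts $B,Z_{1}\in\mathcal{U}_{1}$; either way we reach a contradiction, so $\mathcal{U}_{1}=\mathcal{U}_{2}$. Combined with the previous paragraph, $\mathcal{F}$ is contained in exactly one $z$-ultrafilter and that ultrafilter is real, which is the assertion.

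The routine ingredients (the Zorn extension, the fact that disjoint zero sets are completely separated, and the verification that $A$ and $B$ are genuinely zero sets) I would simply cite from \cite{gj60}. The step I expect to carry the real weight is the realness argument: the delicate point is that the primeness dichotomy could a priori place $Z_{n}'(f)$ rather than $Z_{n}(f)$ in $\mathcal{F}$, and one must first rule this out using the filter property of $\mathcal{U}$ before the countable intersection property of $\mathcal{F}$ can be brought to bear.
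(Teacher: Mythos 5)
Your proof is correct. Note that the paper gives no written proof of this lemma at all: it is dispatched with the remark that it ``can be easily derived from the proof of Lemma 8.12 of \cite{gj60}'', so your argument serves as a self-contained replacement for a citation rather than a variant of an in-text proof. The route behind that citation differs from yours in its packaging: there one takes the unique $z$-ultrafilter $\mathcal{U}$ containing the prime $z$-filter $\mathcal{F}$ (uniqueness being quoted from the earlier result 2.13 of \cite{gj60}) and shows that every member of $\mathcal{U}$ is a countable intersection of members of $\mathcal{F}$, using the same dichotomy you exploit (primeness applied to $\{x:|f(x)|\le 1/n\}\cup\{x:|f(x)|\ge 1/n\}=X$), so that the countable intersection property transfers from $\mathcal{F}$ to $\mathcal{U}$, which is precisely realness of $\mathcal{U}$. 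You instead certify realness through the $Z_{n}(f)$-criterion of Theorem 5.7 of \cite{gj60} (recalled in Section 2 of this paper and used in exactly the same way in the proof of Theorem \ref{thm6}), and you re-prove the uniqueness of the containing $z$-ultrafilter from scratch via complete separation of disjoint zero sets plus primeness. Both arguments pivot on the identical prime-dichotomy trick, so they are mathematically close cousins; what yours buys is full self-containment, in keeping with the paper's stated aim of avoiding imported machinery, while the cited route is shorter (given 2.13 of \cite{gj60}) and yields the slightly stronger structural fact that each member of $\mathcal{U}$ is a countable intersection of members of $\mathcal{F}$.
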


The following theorem was proved in \cite{gj60}, corollary 8.14 involving
intricately the extension property of $\beta X$ and $\upsilon X.$
Here we have proved this results without using any kind of extension
property of $\beta X$.
\begin{thm}
\label{thm8}Every cozero subset of a realcompact space is realcompact.
\end{thm}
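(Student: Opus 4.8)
The plan is to reduce the statement to the $z$-ultrafilter criterion of Theorem \ref{thm6} and then transport a witnessing ultrafilter from the cozero set up to the ambient space. Write $C=\operatorname{coz}f$ with $f\in C(X)$ and assume $X$ is realcompact. By the equivalence $(3)\Rightarrow(1)$ of Theorem \ref{thm6} applied to $C$, it suffices to show that every $z$-ultrafilter $\mathcal{U}$ on $C$ with the countable intersection property is fixed in $C$. The key structural fact I would exploit is that, being a cozero set, $C$ is $z$-embedded in $X$ (recalled in Section 2), so every zero set of $C$ is the trace on $C$ of a zero set of $X$; this lets me move back and forth between $\mathcal{U}$ and a suitable $z$-filter on $X$.

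First I would define $\mathcal{F}=\{W\in Z[X]:W\cap C\in\mathcal{U}\}$ and verify that $\mathcal{F}$ is a prime $z$-filter on $X$ with the countable intersection property. The filter axioms follow since $(W_{1}\cap W_{2})\cap C=(W_{1}\cap C)\cap(W_{2}\cap C)$ and traces of zero sets of $X$ are zero sets of $C$; primeness of $\mathcal{F}$ comes directly from primeness of the $z$-ultrafilter $\mathcal{U}$, because $(W_{1}\cup W_{2})\cap C=(W_{1}\cap C)\cup(W_{2}\cap C)$; and the countable intersection property transfers because $\bigcap_{n}(W_{n}\cap C)\neq\emptyset$ forces $\bigcap_{n}W_{n}\neq\emptyset$. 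Since $X$ is realcompact, the implication $(1)\Rightarrow(2)$ of Theorem \ref{thm6} (or alternatively Lemma \ref{lem7}, which places $\mathcal{F}$ inside a unique real $z$-ultrafilter that is then fixed) yields a point $p\in\bigcap\mathcal{F}$.

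The main obstacle is the last step: showing that this fixed point $p$ actually lies in $C$ rather than in $Z(f)=X\setminus C$, for only then does $p$ serve as a cluster point of $\mathcal{U}$ inside $C$. Here I would use realness of $\mathcal{U}$ essentially. Set $Z_{n}=\{x\in X:|f(x)|\geq\frac1n\}$, which is a genuine zero set of $X$ (namely $Z((\frac1n-|f|)\vee 0)$) and satisfies $Z_{n}\subseteq C$. On $C$ the function $g=1/f$ is continuous, and since $\mathcal{U}$ has the countable intersection property it is real, so by the real-filter criterion of Section 2 there is an $n$ with $\{x\in C:|g(x)|\geq n\}\notin\mathcal{U}$. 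As $\{|g|\geq n\}\cup\{|g|\leq n\}=C\in\mathcal{U}$ and $\mathcal{U}$ is prime, I conclude $Z_{n}=\{x\in C:|g(x)|\leq n\}\in\mathcal{U}$, hence $Z_{n}\in\mathcal{F}$ and therefore $p\in Z_{n}\subseteq C$.

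Finally I would close the argument by checking that $p\in\bigcap\mathcal{U}$. Each $Z\in\mathcal{U}$ equals $\widehat{Z}\cap C$ for some zero set $\widehat{Z}$ of $X$ (by $z$-embeddedness), and then $\widehat{Z}\in\mathcal{F}$, so $p\in\widehat{Z}$; combined with $p\in C$ this gives $p\in Z$. Thus $\mathcal{U}$ is fixed in $C$, and since $\mathcal{U}$ was an arbitrary $z$-ultrafilter on $C$ with the countable intersection property, Theorem \ref{thm6} shows $C$ is realcompact. I expect the delicate point to be the placement of $p$ inside $C$, as this is exactly where the cozero hypothesis (through $1/f$ and the realness of $\mathcal{U}$) is indispensable; the rest is bookkeeping with traces enabled by $z$-embeddedness.
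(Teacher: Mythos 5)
Your proof is correct, and its skeleton is the same as the paper's: transport the given $z$-ultrafilter on the cozero set $C$ to a prime $z$-filter on $X$ (your $\mathcal{F}$ is exactly $i^{\#}(\mathcal{U})$ for the inclusion $i:C\to X$), use realcompactness of $X$ to produce a point, and then use realness of the subspace ultrafilter together with $f$ to force that point into $C$, finishing with $z$-embeddedness. But you execute two steps differently, and both differences are worth noting. First, the paper runs $i^{\#}(\mathcal{F})$ through Lemma \ref{lem7}, extending it to the unique real $z$-ultrafilter $\mathcal{U}$ on $X$ and then fixing that ultrafilter; you instead fix the prime $z$-filter directly via the implication $(1)\Rightarrow(2)$ of Theorem \ref{thm6}, so Lemma \ref{lem7} is never needed -- a small economy. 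Second, the ``boundary'' step is handled in dual ways: the paper argues by contradiction that $Z(f)$ cannot lie in the real extension (otherwise the traces $\{x\in X:|f(x)|\leq\frac{1}{n}\}\cap C$ would all belong to the subspace ultrafilter and its countable intersection property would manufacture a zero of $f$ inside $C$), whereas you argue positively, applying the real-ultrafilter criterion recalled in Section 2 to $g=1/f\in C(C)$ and using primeness to exhibit a single $n$ with $Z_{n}=\{x\in X:|f(x)|\geq\frac{1}{n}\}\in\mathcal{U}$, so the fixed point lands in $Z_{n}\subseteq C$. The two mechanisms consume the same ingredients (primeness plus realness of the ultrafilter on $C$); yours pins down a concrete member of $\mathcal{F}$ contained in $C$ and avoids the uniqueness statement of Lemma \ref{lem7}, while the paper's version never has to introduce the auxiliary function $1/f$. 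Your final step, pulling $p$ back into every member of $\mathcal{U}$ via $z$-embeddedness, matches the paper's closing computation $\cap\mathcal{U}\subseteq\cap(i^{\#}(\mathcal{F})\cap S)=\cap\mathcal{F}$.
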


\begin{proof}
Let $S$ be a cozero subset of $X.$ Suppose $X\bs S=Z(f)$ for some
$f\in C(X).$ Let $\mathcal{F}$ be a real-$z$-ultrafilter on $S.$
Then $i^{\#}(\mathcal{F})$ is a prime-$z$-filter on $X$ and $i^{\#}(\mathcal{F})\cap S=\mathcal{F}$
as $S$ is $z$-embedded in $X.$ It is contained in the unique real
$z$-ultrafilter $\mathcal{U}$ on $X.$ If $Z(f)\in\mathcal{U},$
then $\{x\in X:|f(x)|\leq\frac{1}{n}\}$ is a member of $\mathcal{U},\forall n$
and $\{x\in X:|f(x)|\geq\frac{1}{n}\}\notin\mathcal{U},$ for all
$n$. As $i^{\#}(\mathcal{F})$ is a prime $z$-filter, $\{x\in X:|f(x)|\leq\frac{1}{n}\}\in i^{\#}(\mathcal{F})$,
for all $n$. Hence $\{x\in X:|f(x)|\leq\frac{1}{n}\}\cap S\in\mathcal{F}.$
As $\mathcal{F}$ is real $z$-ultrafilter on $S$, $\bigcap_{n}\{x\in X:|f(x)|\leq\frac{1}{n}\}\cap S\neq\emptyset$.
So there exists a point $z\in S$ such that $|f(z)|\leq\frac{1}{n},$
for all $n$. Hence $f(z)=0,$ which is a contradiction. So $Z(f)\notin\mathcal{U}.$
Then $(\bigcap\mathcal{U})\cap Z(f)=\emptyset,$ otherwise $Z(f)$
would intersect every member of $\mathcal{U}.$ This would imply that
$Z(f)\in\mathcal{U}$, a contradiction. As $\mathcal{U}$ is real
and $X$ is realcompact, $\cap\mathcal{U}\neq\emptyset.$ Now $\mathcal{\cap U\subseteq\cap}i^{\#}(\mathcal{F})$
$\Rightarrow$ $\cap\mathcal{U}=(\cap\mathcal{U})\cap S\subseteq(\cap i^{\#}(\mathcal{F}))\cap S=\cap(i^{\#}(\mathcal{F})\cap S)=\cap\mathcal{F}.$
This follows that $\mathcal{F}$ is fixed. Hence $S$ is realcompact.
\end{proof}
Now we shall give definition of a hard set. As the original definition
of it involves $\beta X,$ we therefore adopted the intrinsic characterization
of hard set given by Rayburn\cite{r76}, metioned in Theorem 2 of
this paper as a definition of hard set.
\begin{defn}
\label{def11}A subset $H$ of a space $X$ is called hard in $X$
if it is closed in $X$ and there exists a compact set $K$ such that
for any open neighbourhood $V$ of $K$ there exists a realcompact
subset $P$ such that $H\bs V$ is completely separated from the complement
of $P$ . Henceforth we shall refer $K$ as the compact set satisfying
the property of hardness.
\end{defn}

\begin{rem}
\label{rem12}It immediately follows that a closed subset of $X$
which is completely separated from a complement of realcompact subset
containing the closed set, is hard in $X$ (Simply take $K=\emptyset$).
Hence any zero set contained in a realcompact cozero set is hard.
Moreover every closed subset of hard set is hard as if $L$ is a closed
subset of a hard set $H$ and $K$ is the compact set satisfying the
proprty of hardness of $H$ , then same $K$ will satisfy the property
of hardness of $L$.

The following theorem is a little modification of the definintion
of hard set but equally important in our paper.
\end{rem}

\begin{thm}
\label{thm13}A closed set $H$ is hard in $X$ if and only if there
exists compact subset $K$ of $H$ such that for any open neighbourhood
$G$ of $K$, $H\bs G$ is completely separated from the complement
of some realcompact set.
\end{thm}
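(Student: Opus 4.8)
The plan is to prove the two implications separately, with all the content sitting in one direction. The \emph{if} direction is immediate from Definition \ref{def11}: if there is a compact subset $K$ of $H$ with the stated property, then $K$ is in particular a compact subset of $X$ for which every open neighbourhood $G$ of $K$ makes $H\bs G$ completely separated from the complement of some realcompact set, which is exactly the defining condition of hardness. So $H$ is hard in $X$, and nothing more is needed here.

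For the \emph{only if} direction, suppose $H$ is hard in $X$ and let $K$ be a compact set witnessing hardness in the sense of Definition \ref{def11} (a priori $K$ need not meet $H$ at all). The natural candidate for a compact subset of $H$ is $K':=K\cap H$. Since $X$ is Tychonoff, hence Hausdorff, $K$ is closed, so $K'$ is a closed subset of the compact set $K$ and is therefore compact; and clearly $K'\subseteq H$. What remains is to show that $K'$ itself witnesses the hardness condition, i.e. that for every open neighbourhood $G$ of $K'$ the set $H\bs G$ is completely separated from the complement of some realcompact set. The idea is to manufacture, from $G$, an open neighbourhood of the \emph{full} set $K$ so that the hardness property of $K$ can be invoked.

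The key step is the reduction $G\rightsquigarrow V:=G\cup(X\bs H)$. First I would check that $V$ is an open neighbourhood of $K$: since $H$ is closed, $X\bs H$ is open, so $V$ is open; and any $p\in K$ either lies in $H$, whence $p\in K\cap H=K'\subseteq G$, or lies in $X\bs H$, so in both cases $p\in V$ and thus $K\subseteq V$. Next I would compute the trace of $V$ on $H$: since $X\bs V=(X\bs G)\cap H$, we get $H\bs V=H\cap(X\bs G)\cap H=H\bs G$. Applying the hardness of $K$ to the neighbourhood $V$ produces a realcompact set $P$ with $H\bs V$ completely separated from $X\bs P$, and because $H\bs G=H\bs V$ the very same $P$ works for $G$. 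Hence $K'$ witnesses hardness and the proof is complete.

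I expect the only delicate point to be the construction and verification of $V$: one must confirm that adjoining $X\bs H$ enlarges $G$ into a neighbourhood of all of $K$ (including the part of $K$ outside $H$) while this adjunction is annihilated upon intersecting with $H$, yielding the exact identity $H\bs V=H\bs G$. Once that identity is secured, the conclusion is a direct appeal to the hardness of the original $K$, and no further machinery (realcompactness criteria, $z$-ultrafilters, etc.) is required; in particular one does not even need the fact that complete separation is inherited by subsets, since the two sets coincide.
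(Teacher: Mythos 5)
Your proof is correct, and its skeleton matches the paper's: the \emph{if} direction is read off immediately from Definition \ref{def11}, and for the \emph{only if} direction both you and the paper take the compact subset to be the intersection of $H$ with a hardness witness, and then show that an arbitrary open neighbourhood $G$ of that intersection can be enlarged to an open neighbourhood of the \emph{whole} witness whose trace on $H$ is unchanged, so that the hardness of the original witness can be quoted verbatim. The difference is in how the enlargement is built, and here your route is genuinely simpler. The paper observes that the uncovered part of the witness (its $L\bs V$) is compact and disjoint from the closed set $H$, invokes strong separation to produce an open $U$ containing it with $U\cap H=\emptyset$, and then uses $V\cup U$. You instead take $U=X\bs H$ outright: it is open because $H$ is closed, it automatically contains every point of the witness lying outside $H$ (such points cannot be in the intersection, hence not in $H$), and its intersection with $H$ is empty, which gives the exact identity $H\bs\bigl(G\cup(X\bs H)\bigr)=H\bs G$. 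This eliminates the compact-versus-closed separation step entirely; the only topological input you use is that $H$ is closed. In effect you noticed that the paper's separating open set $U$ can always be chosen to be $X\bs H$ itself, so your argument is a clean local simplification of the paper's proof, with everything else (including the trivial converse direction) coinciding.
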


\begin{proof}
Suppose $H$ be a closed set in $X$ which is hard in $X$.

We have to show that there exists a compact set $K$ in $H$ which
satisfies the property of hardness. Since $H$ is hard, there exists
a compact subset $L$ of $X$ which satisfies the property of hardness.
$X$ being Tychonoff, $L$ is closed and so is $H\cap L$. Again $H\cap L\subset L$
which implies $H\cap L$ is compact subset of $H$, we call it as
$K$. Now, our claim is, $K$ satisfies the property of hardness.
Let $V$ be an open neighbourhood of $K$. Then $L\bs V$ is compact
subset of $X$ and $H$ is closed in $X$ with $H\cap(L\bs V)=\phi$.
Then $H$ and $L\bs V$ can be strongly separated. So, there exists
an open set $U$ in $X$ containing $L\bs V$ and $H\cap U=\phi$.
Now $V\cup U$ is an open set in $X$ such that $L\subset V\cup U$
i.e. $V\cup U$ is an open neighbourhood of $L$. Therefore by the
property of hardness of $L$, $H\bs(V\cup U)$ is completely separated
from the complement of a realcompact subset of $X$. But $H\bs(V\cup U)=(H\bs V)\cap(H\bs U)=H\bs V,$
as $H\cap U=\phi.$ So $H\bs V$ is completely separated from the
complement of a realcompact subset of $X$. Hence we have a compact
subset $K$ of $H$ which satisfies the property of hardness.

Conversely, let $H$ be a closed subset of $X$ and there exists a
compact subset $K$ of $H$ such that for any open neighbourhood $G$
of $K$, $H\bs G$ is completely separated from the complement of
some realcompact subset of $X$.

Since $K$ is compact in $H$, it is compact in $X$ also. Hence we
have a compact set $K$ in $X$ such that the given condition holds.
Therefore $H$ is hard in $X$.
\end{proof}
The following trivial but important observations are indeed direct
consequences of the Theorem \ref{thm13}.
\begin{thm}
Let $H$ be a closed subset of $X$. Then the followings are equivalent.
\end{thm}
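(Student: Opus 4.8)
The plan is to prove the equivalences as a cycle of implications anchored on Theorem \ref{thm13}, which already identifies hardness of a closed set $H$ with the existence of a compact core $K \subseteq H$ whose neighbourhoods control the non-realcompact behaviour of $H$. Since the listed conditions are advertised as direct consequences of Theorem \ref{thm13}, I expect each individual implication to be short; the real content is a small dictionary translating between the several ways of saying that $H \setminus G$ avoids the complement of a realcompact set, and I would set that dictionary up first.

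The two facts I would isolate at the outset are: (a) complete separation is inherited by closures, since a function witnessing that $H \setminus G$ is completely separated from $X \setminus P$ vanishes on $\overline{H \setminus G}$ by continuity, so the same $P$ works for the closure; and (b) if a set $F$ is completely separated from $X \setminus P$ with $P$ realcompact, then $F$ lies in a realcompact cozero set. For (b), take $f \in C(X)$ with $0 \le f \le 1$, $f = 0$ on $F$ and $f = 1$ on $X \setminus P$; then $C = \{x : f(x) < 1\}$ is a cozero set with $F \subseteq C \subseteq P$, and since $C$ is a cozero subset of the realcompact space $P$, Theorem \ref{thm8} makes $C$ realcompact. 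Conversely, using that disjoint zero sets are completely separated, a zero set contained in a realcompact cozero set $C$ is completely separated from $X \setminus C$, which is precisely the reasoning already recorded in Remark \ref{rem12}.

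With this dictionary in hand, I would run the cycle as follows. To reach hardness in the sense of Definition \ref{def11} from any of the compact-core reformulations, I simply note that a compact subset $K$ of $H$ is a compact subset of $X$, so Theorem \ref{thm13} applies verbatim. To move between a condition phrased with $H \setminus G$ and one phrased with $\overline{H \setminus G}$, I use fact (a) in one direction and, in the other, the trivial inclusion $H \setminus G \subseteq \overline{H \setminus G}$ together with the observation that a subset of a completely separated set is completely separated from the same complement. To move between the ``completely separated from the complement of a realcompact set'' phrasing and a ``contained in a realcompact cozero set'' phrasing, I use fact (b) and its converse from Remark \ref{rem12}, keeping the same compact core $K$ fixed throughout.

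The main obstacle I anticipate is bookkeeping rather than depth: ensuring that a single compact core $K \subseteq H$ is carried unchanged through every implication, and that each time I enlarge $H \setminus G$ (to its closure, or to a cozero neighbourhood) the realcompactness of the ambient set is genuinely preserved. This is exactly where Theorem \ref{thm8} and the fact that closed subsets of realcompact sets are realcompact must be invoked with care, and I would double-check at each step that the direction of containment ($F \subseteq C \subseteq P$ versus $X \setminus P \subseteq X \setminus C$) is the one the separating function actually supplies, together with the caveat that the converse in (b) is cleanest when the set in question is genuinely a zero set.
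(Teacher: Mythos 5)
Your dictionary for passing between ``$H\bs V$ is completely separated from the complement of a realcompact set'' and ``$H\bs V$ sits inside a realcompact cozero set'' is exactly the engine of the paper's proof: the paper likewise takes the two disjoint zero sets $Z_{1}\supseteq H\bs V$ and $Z_{2}\supseteq X\bs P$ supplied by complete separation, observes $X\bs Z_{2}\subseteq P$, and invokes Theorem \ref{thm8} to conclude that the cozero set $X\bs Z_{2}$ is realcompact; the converse direction is, as you say, just the fact that disjoint zero sets are completely separated, as in Remark \ref{rem12}. The paper also anchors everything on Theorem \ref{thm13} with a single compact core $K\subseteq H$, as you propose. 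Two bookkeeping remarks: the conditions of the theorem ask for an intermediate \emph{zero set} $Z_{1}$ with $H\bs V\subset Z_{1}\subset X\bs Z_{2}$, whereas your fact (b) only produces the cozero set $C=\{x:f(x)<1\}$, so you should also record, say, $Z_{1}=\{x\in X:f(x)\le\frac{1}{2}\}$ (complete separation gives this for free); and the transitions you set up between $H\bs G$ and its closure $\overline{H\bs G}$ address conditions that do not occur in this theorem --- both listed conditions are phrased with $H\bs V$ itself.

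The genuine gap concerns the third condition. The theorem's list is: (1) $H$ is hard in $X$; (2) there is a compact $K\subseteq H$ such that for every open neighbourhood $V$ of $K$ there are a zero set $Z_{1}$ and a realcompact cozero set $X\bs Z_{2}$ with $H\bs V\subset Z_{1}\subset X\bs Z_{2}$; (3) the same statement with $Z_{1}$ required to be a \emph{regular hard} zero set. Your plan establishes (1)$\Leftrightarrow$(2), but it contains no mechanism for upgrading $Z_{1}$ to a regular closed set, and that upgrade is the only step of the paper's proof not already present in your dictionary: starting from the separating function $g$ with $g(Z'_{1})=\{0\}$ and $g(Z_{2})=\{1\}$, the paper replaces $Z'_{1}$ by the level set $Z_{1}=\{x\in X:g(x)\le\frac{1}{2}\}$, which still misses $Z_{2}$, is asserted to satisfy $cl_{X}int_{X}Z_{1}=Z_{1}$, and is hard because it lies in the realcompact cozero set $X\bs Z_{2}$. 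Without some such construction (or an explicit appeal to the fact that two completely separated sets can be separated by a regular zero set), the implication (1)$\Rightarrow$(3) is missing from your argument; the reverse implication (3)$\Rightarrow$(1) is covered by your dictionary, since it simply ignores regularity.
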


\begin{enumerate}
\item $H$ is hard in $X$.
\item There exists a compact subset $K$ of $H$ such that for any open
neighbourhood $V$ of $K$, there exist a zero set $Z_{1}$ and a
realcompact cozero set $X\bs Z_{2}$ such that $H\bs V\subset Z_{1}\subset X\bs Z_{2}$.
\item There exists a compact subset $K$ of $H$ such that for any open
neighbourhood $V$ of $K$, there exist a regular hard zero set $Z_{1}$
and a realcompact cozero set $Z_{2}$ with $H\bs V\subset Z_{1}\subset X\bs Z_{2}$.
\end{enumerate}
\begin{proof}
$(1)\Rightarrow(2):$ Let $H$ is hard in $X$. So there exists a
compact subset $K$ of $H$ such that for any open neighbourhood $V$
of $K$, $H\bs V$ is completely separated from the complement of
a realcompact subspace $P$ of $X$. i.e. $H\bs V$ and $X\bs P$
are completely separated. Then there exist two disjoint zero sets
$Z_{1}$ and $Z_{2}$ such that $H\bs V\subset Z_{1}$ and $X\bs P\subset Z_{2}$
with $Z_{1}\cap Z_{2}=\phi$. So $Z_{1}\subset X\bs Z_{2}$ and $X\bs P\subset Z_{2}\Rightarrow X\bs Z_{2}\subset P$.
As $P$ is realcompact and $X\bs Z_{2}$ is a cozero subset of $P$,
this implies $X\bs Z_{2}$ is a realcompact cozero set in $X$ and
$H\bs V\subset Z_{1}\subset X\bs Z_{2}$.

$(2)\Rightarrow(1):$ Let there exists a compact set $K$ of $H$
such that for any open neighbourhood $V$ of $K$, there exist a zero
set $Z_{1}$ and a realcompact cozero set $X\bs Z_{2}$ such that
$H\bs V\subset Z_{1}\subset X\bs Z_{2}$. Therefore $Z_{1}\cap Z_{2}=\phi$,
then $H\bs V$and $Z_{2}$ are completely separated where $Z_{2}$
is the complement of a realcompact subset $X\bs Z_{2}$. Hence $H$
is hard in $X$.

$(1)\Rightarrow(3):$ Let $H$ is hard in $X$. Then as $(1)\Rightarrow(2)$
and $(2)\Rightarrow(1)$, so there exists a compact set $K$ in $H$
such that for any open neighbourhood $V$ of $K$, there exist a zero
set $Z'_{1}$ and a real compact cozero set $X\bs Z_{2}$ such that
$H\bs V\subset Z'_{1}\subset X\bs Z_{2}$. i.e. $Z'_{1}\cap Z_{2}=\phi$,
so $Z'_{1}$ and $Z_{2}$ are completely separated. Then there exists
$g\in C(X)$ such that $g(Z'_{1})=\{0\}$ and $g(Z_{2})=\{1\}$. Let
$Z_{1}=\{x\in X:g(x)\le\frac{1}{2}\}$ and $Z'_{2}=\{x\in X:g(x)\ge\frac{1}{2}\}$.
Therefore $Z'_{1}\subset Z_{1}$, $Z_{2}\subset Z'_{2}$ and $Z_{1}\cap Z_{2}=\phi$.
Also $cl_{X}int_{X}Z_{1}=Z_{1}\Rightarrow Z_{1}$ is a regular closed
zero set and $Z_{1}\subset X\bs Z_{2}$ where $X\bs Z_{2}$ is a realcompact
cozero set in $X$. Hence $Z_{1}$ is a regular hard zero set and
$X\bs Z_{2}$ is a realcompact cozero set such that $H\bs V\subset Z_{1}\subset X\bs Z_{2}$.

$(3)\Rightarrow(1):$ Let the given condition holds. i.e $H\bs V\subset Z_{1}\subset X\bs Z_{2}$.
Then $Z_{1}\cap Z_{2}=\phi$. So $H\bs V$ and $Z_{2}$ are completely
separated where $Z_{2}$ is the complement of a realcompact subset
of $X$. Hence $H$ is hard in $X$.
\end{proof}
Likewise the definition of hard set, we now give definition of nearly
pseudocompact space. Again its original definition involves $\beta X$.
We here adopted the intrinsic characterization of nearly pseudocompact
space given by Henriksen and Rayburn in their paper \cite{hr80},
mentioned in Theorem 3 (5) in this paper.
\begin{defn}
\label{def14}A space $X$ is nearly pseudocompact if $X$ can be
expressed as $X_{1}\cup X_{2},$ where $X_{1}$ is a regular closed
almost locally compact pseudocompact subset and $X_{2}$ is regular
closed anti-locally realcompact and $int_{X}(X_{1}\cap X_{2})=\emptyset.$
\end{defn}

The following facts are immediate from the definition of the nearly
pseudocompact space. If $X$ is almost locally compact nearly pseudocompact
space then $X=X_{1}$. Hence $X$ is pseudocompact. On the other side
if $X$ is anti-locally realcompact space then we may take $X_{1}=\phi$,
which follows from the definition that $X$ is nearly pseudocompact.
We have the following theorem already done by Henriksen and Rayburn
in \cite{hr80}. But they have proved the theorem involving $\beta X$.
Here we have given an alternate proof without using $\beta X$.
\begin{thm}
\label{thm15}The followings are equivalent.
\end{thm}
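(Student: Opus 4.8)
Reading the four equivalent conditions as: $(1)$ $X$ is nearly pseudocompact in the sense of Definition~\ref{def14}; $(2)$ every hard set is compact; $(3)$ every regular hard set is compact; and $(4)$ every decreasing sequence of non-empty regular hard sets has non-empty intersection, the plan is to run the cycle $(1)\Rightarrow(3)\Rightarrow(2)\Rightarrow(3)$ and $(3)\Rightarrow(4)\Rightarrow(3)$ and then close with $(3)\Rightarrow(1)$. Two edges are essentially free. Since a regular hard set is hard, $(2)\Rightarrow(3)$ is immediate, and $(3)\Rightarrow(4)$ is just the finite intersection property of a decreasing family of non-empty compact sets. For $(3)\Rightarrow(2)$ I would use a covering trick: given a hard set $H$, Theorem~\ref{thm13} provides a compact core $K\subseteq H$; from any open cover of $H$ extract a finite subfamily covering $K$ and let $V$ be its union, an open neighbourhood of $K$; the hardness characterisation furnishes a regular hard zero set $Z_1$ with $H\bs V\subset Z_1$, and $H\bs V$, being closed in $X$ and contained in the compact set $Z_1$ (compact by $(3)$), is compact, so finitely many more members of the cover catch $H\bs V$ and we obtain a finite subcover of $H$.

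For $(1)\Rightarrow(3)$ I would exploit the decomposition $X=X_1\cup X_2$. Let $H$ be regular hard with compact core $K$, and for open $V\supseteq K$ let $W=X\bs Z_2$ be the realcompact cozero set with $H\bs V\subset W$ coming from the hardness characterisation. The crucial point is that $W$ misses $X_2$: if $p\in W\cap X_2$, then $W\cap X_2$ is closed in the realcompact set $W$, hence realcompact, and it is a neighbourhood of $p$ in $X_2$, contradicting the anti-local-realcompactness of $X_2$. Hence $(H\bs V)\cap X_2=\emptyset$, i.e. $H\cap X_2\subseteq V$ for every such $V$, so $H\cap X_2\subseteq K$ because $K$ is compact in the Tychonoff space $X$. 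It then remains to upgrade the realcompact set $H\bs V\subseteq W\subseteq X_1$ to a compact one, for which I would lean on the pseudocompactness and almost-local-compactness of $X_1$ together with the equivalence (recorded after Definition~\ref{def5}) that a space is compact iff it is realcompact and pseudocompact; the covering trick above then yields compactness of $H$.

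The returns to $(3)$ from $(4)$ and to $(1)$ rest on an auxiliary lemma that I would prove first: \emph{every hard set is realcompact}. Indeed, for a hard $H$ with compact core $K$ and a $z$-ultrafilter $\mathcal{U}$ on $H$ with the countable intersection property, either some zero set $Z\in\mathcal{U}$ is disjoint from $K$ --- then $Z$ lies in some $H\bs V\subset W$, so $Z$ is realcompact as a closed subset of the realcompact $W$ and its trace is fixed by Theorem~\ref{thm6} --- or every zero set of $\mathcal{U}$ meets the compact set $K$, forcing a cluster point in $K$; either way $\mathcal{U}$ is fixed. Granting this, a non-compact regular hard set is realcompact but not compact, hence not pseudocompact, so it carries an unbounded $f\in C(H)$; the zero sets $\{x\in H:|f(x)|\ge n\}$ are closed in $H$, hence hard by Remark~\ref{rem12}, and after replacing them by the regular hard zero sets supplied by the hardness characterisation they form a decreasing sequence of non-empty regular hard sets with empty intersection. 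This delivers $(4)\Rightarrow(3)$ by contraposition.

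Finally, for $(3)\Rightarrow(1)$ I would argue contrapositively with the natural candidates $X_2=cl_X\{x:x\text{ has no realcompact neighbourhood}\}$ and $X_1=cl_X\{x:x\text{ has a compact neighbourhood}\}$, showing that a failure of $X=X_1\cup X_2$ or of $int_X(X_1\cap X_2)=\emptyset$ allows one to manufacture a non-compact regular hard set, contradicting $(3)$. I expect this last direction to be the main obstacle: it is exactly where Henriksen and Rayburn invoked $\beta X$, and the difficulty is to reconstruct the decomposition purely from the separation-and-realcompactness data of Definition~\ref{def11} and Theorem~\ref{thm13}, with no recourse to points of $\beta X$ or $\upsilon X$. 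The two technical lemmas I would want firmly in hand before attempting it are the realcompactness of hard sets above and the upgrade of a realcompact cozero subset of the pseudocompact, almost locally compact $X_1$ to a compact set.
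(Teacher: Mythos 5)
Your overall architecture --- taking Definition~\ref{def14} as the meaning of (1), proving that (1) forces compactness of hard sets via a covering trick through the decomposition $X=X_1\cup X_2$, and returning by contraposition --- is exactly the paper's architecture, and several of your steps are sound: the trivial edge (2)$\Rightarrow$(3), the covering reduction of compactness of $H$ to compactness of $H\bs V$, the observation that the realcompact cozero set $W$ supplied by hardness must miss $X_2$ (your argument for this is correct), and your auxiliary lemma that every hard set is realcompact, which the paper never states explicitly but silently uses in the proof of Theorem~\ref{thm20}. Note, however, that Theorem~\ref{thm15} as stated in the paper has only three items; the decreasing-sequence condition belongs to the quoted Henriksen--Rayburn theorem of Section 2, so your entire (3)$\Leftrightarrow$(4) branch is extraneous here.

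There are two genuine gaps. First, in your (1)$\Rightarrow$(3) you say it ``remains to upgrade the realcompact set $H\bs V\subseteq W\subseteq X_1$ to a compact one'' by leaning on pseudocompactness of $X_1$; but pseudocompactness does not pass to arbitrary closed subsets, only (for instance) to regular closed ones, so ``$H\bs V$ is closed in the pseudocompact $X_1$'' gives you nothing. The paper threads this needle by arranging a \emph{regular} realcompact zero set $Z$ with $H\bs V\subseteq int_X Z\subseteq Z$: then $int_X Z\subseteq X_1$, so $Z=cl_X int_X Z$ is regular closed in $X_1$, hence pseudocompact, hence (being also realcompact) compact, and $H\bs V$ is a closed subset of a compact set. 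You in fact have such a $Z_1$ available from the characterization following Theorem~\ref{thm13} --- you invoke it in your (3)$\Rightarrow$(2) --- but your proposal never makes this regular-closed step, and without it the direction fails. Second, and more seriously, you do not prove (3)$\Rightarrow$(1) at all; you only name the candidate sets $X_1$, $X_2$ and declare the direction the main obstacle. This is the crux of the theorem, and the paper's construction is concrete: if $X$ is not nearly pseudocompact, then $X_1\neq\emptyset$ and $int_X X_1$ is not relatively pseudocompact, so some $f\geq 1$ is unbounded there, yielding a $C$-embedded copy $N$ of $\mathbb{N}$; almost local compactness gives cozero sets $R_n$ with compact closures inside $X_1$, so $P=\cup_n R_n$ is a realcompact cozero set containing $N$ (its closure-union is $\sigma$-compact, hence realcompact); since $N$ is $C$-embedded it can be completely separated from $X\bs P$, producing a regular zero set $W$ with $N\subseteq W\subseteq P$ --- a regular hard zero set containing a closed copy of $\mathbb{N}$, hence non-compact, contradicting (3). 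Without this construction (or a substitute for it) the equivalence is not established.
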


\begin{enumerate}
\item A space $X$ is nearly pseudocompact.
\item Every hard set is compact.
\item Every regular hard set is compact.
\end{enumerate}
\begin{proof}
$(1)\Rightarrow(2):$ Suppose $X$ is nearly pseudocompact. $X=$$X_{1}\cup X_{2},$
where $X_{1}$ is a regular closed almost locally compact pseudocompact
subset and $X_{2}$ is regular closed anti-locally realcompact and
$int_{X}(X_{1}\cap X_{2})=\emptyset.$ Let $H$ be a hard set. There
exists a compact subset $K\subseteq H$ satisfying the property of
hardness. Let $\mathcal{U}=\{U_{\alpha}:\alpha\in\lambda\}$ be an
open cover of $H.$ There exist finitely many indices $\alpha_{1},\alpha_{2},...,\alpha_{n}$
such that $K\subseteq\cup_{i=1}^{n}U_{\alpha_{i}}=V(say).$ Then there
exists a regular zero set $Z$ such that $H\bs V\subseteq int_{X}Z\subseteq Z$
and $Z$ is realcompact. That means $int_{X}Z\cap X_{2}=\emptyset,$
otherwise $X_{2}$ would contain a point having realcompact neighbourhood,
a contradiction. So $int_{X}Z\subseteq X_{1}.$ Then $cl_{X}int_{X}Z$
being a regular closed subset of the pseudocompact space $X_{1}$
and $Z,$ is both pseudocompact and realcompact. Hence it is compact.
Now $H\bs V$ being a closed subset of $cl_{X}int_{X}Z$, is also
compact and to cover it we need another finite sub-collection of $\mathcal{U}$
and hence as a conclusion there exists atmost finitely many members
of $\mathcal{U},$ to cover $H.$ Hence $H$ is compact.

$(2)\Rightarrow(3):$Trivial

$(3)\Rightarrow(1)$ : Suppose $X$ is not nearly pseudocompact. Then
$X\neq X_{2},$ otherwise $X$ would be anti-locally realcompact and
hence nearly pseudocompact. Then $X_{1}\neq\emptyset$ and also $X_{1}$
is not pseudocompact and hence $int_{X}X_{1}$ is not relatively pseudocompact
as the closure of open relatively pseudocompact subset is pseudocompact.
There exists $f\in C(X)$ such that $f\geq1$ and is unbounded on
$int_{X}X_{1}$ and hence contains a copy $N$ of $\mathbb{N}$ along
which $f$ tends to infinity. Now for each $n\in N,$ there exists
a cozero set $R_{n}$, such that $cl_{X}R_{n}$ is compact and is
contained in $X_{1}$. So $N\subseteq\cup_{n}R_{n}\subseteq\cup_{n}cl_{X}R_{n},$
which is a $\sigma$-compact subset of $X$ and hence realcompact.
As countable union of cozero set is again a cozero set, $\cup_{n}R_{n}=P(say)$,
a cozero set contained in the realcompact set $\cup_{n}cl_{X}R_{n}$.
Hence $P$ is a realcompact cozero set containing $N.$ As $N$ is
$C$-embedded in $X,$ $N$ and $X\bs P$ can be completely separated.
So there exists a regular zero set $W$ such that $N\subset W\subset P$.
Hence $W$ is a regular hard zero set in $X$. As $N$ is $C$-embedded
in $X$, $N$ is closed in $X$ and hence in $W$. This follows that
$W$ is not compact, a contradiction. So $X$ is nearly pseudocompact.
\end{proof}
The following theorem is already proved by Mitra and Acharyya {[}Theorem
2.5, \cite{ma05}{]} but again using $\beta X$. Here we give an alternate
proof not involving $\beta X$.
\begin{thm}
\label{th16}$X$ is nearly pseudocompact if and only if $C_{H}(X)\subseteq C^{*}(X),$where
$C_{H}(X)=\{f\in C(X):\mbox{support of }f$ $\mbox{is hard in }X\}.$
\end{thm}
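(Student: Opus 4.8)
The plan is to prove both implications; the forward one falls straight out of Theorem \ref{thm15}, while the converse is the substantive half and I would handle it by contraposition. For the forward implication, suppose $X$ is nearly pseudocompact and let $f\in C_H(X)$. By definition the support $S=cl_X(X\bs Z(f))$ is hard, so Theorem \ref{thm15} forces $S$ to be compact; as $f$ is continuous it is bounded on $S$, and it vanishes off $S$ since any point outside the support lies outside $X\bs Z(f)$. Hence $f$ is bounded on all of $X$, i.e. $f\in C^*(X)$, which gives $C_H(X)\subseteq C^*(X)$.

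For the converse I would assume $X$ is not nearly pseudocompact and manufacture a member of $C_H(X)\bs C^*(X)$. The idea is to recycle the configuration built in the proof of $(3)\Rightarrow(1)$ of Theorem \ref{thm15}: from the failure of near pseudocompactness one obtains a $C$-embedded copy $N$ of $\mathbb{N}$ and a realcompact cozero set $P\supseteq N$, together with a function $k\in C(X)$ with values in $[0,1]$ satisfying $k\equiv 0$ on $N$ and $k\equiv 1$ on $X\bs P$ (the latter coming from the complete separation of $N$ and $X\bs P$). I would then set $\phi=\max(1-2k,0)$, so that $\phi\equiv 1$ on $N$ while $X\bs Z(\phi)=\{x:k(x)<\tfrac12\}$, and extend the unbounded assignment $n\mapsto n$ on $N$ to some $\tilde h\in C(X)$ using the $C$-embedding of $N$. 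The candidate function is $h=\tilde h\cdot\phi$.

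It then remains to check the two properties of $h$. On $N$ we have $h(n)=n\cdot 1=n$, so $h$ is unbounded and $h\notin C^*(X)$. For the support, $X\bs Z(h)\subseteq X\bs Z(\phi)=\{x:k(x)<\tfrac12\}$, so $cl_X(X\bs Z(h))$ is contained in the zero set $\{x:k(x)\leq\tfrac12\}$, which lies inside the realcompact cozero set $P$ and is therefore hard by Remark \ref{rem12}; being a closed subset of this hard set, the support of $h$ is itself hard, again by Remark \ref{rem12}. Thus $h\in C_H(X)$, contradicting $C_H(X)\subseteq C^*(X)$, so $X$ must be nearly pseudocompact.

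The only delicate point is this reverse direction, where $h$ must be simultaneously unbounded and have hard support. These two demands pull against each other: extending $n\mapsto n$ off $N$ by $C$-embeddedness alone gives no control over where $h$ fails to vanish, whereas trimming the support carelessly could destroy the unboundedness along $N$. The cutoff $\phi$---equal to $1$ on $N$ yet vanishing outside the zero set $\{x:k(x)\leq\tfrac12\}\subseteq P$---is exactly what reconciles the two, and the fact that $P$ is a realcompact cozero set is what certifies, through Remark \ref{rem12}, that the trimmed support is hard.
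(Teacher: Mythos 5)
Your proof is correct and follows essentially the same route as the paper: the forward direction via Theorem \ref{thm15}, and the converse by contraposition, reusing the $C$-embedded copy $N$ of $\mathbb{N}$ inside a realcompact cozero set $P$ from the proof of Theorem \ref{thm15} and multiplying an extension of $n\mapsto n$ by a cutoff function. Your $\phi=\max(1-2k,0)$ is just an explicit construction of the paper's function $g$ (equal to $1$ on $N$, vanishing on a neighbourhood of $X\bs P$), and your hardness check for the support via Remark \ref{rem12} spells out what the paper leaves implicit.
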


\begin{proof}
If $X$ is nearly pseudocompact, every hard set is compact. So $C_{H}(X)$=
$C_{K}(X)\subseteq C^{*}(X).$ Conversely, if $X$ is not nearly pseudocompact,
it follows from the proof of the previous theorem that there
exists $N$, a copy of $\mathbb{N}$, $C$-embedded in $X$, contained
in a realcompact cozero set $P.$ There exists a continuous function $f$ satisfying $f(n)=n$, for all $n\in N$.
Choose $g\in C(X)$ such that $g=1$ on $N$ and equals to $0$ on a neighbourhood of 
$X\bs P.$ Then $cl_X (X\bs Z(fg)) \subseteq P$ is
hard in $X$. So $fg \in C_H (X)$ but $fg$ is unbounded on $N$. Hence $fg\notin C^*(X)$. 
\end{proof}
We begin our discussion with the following definitions of $hz$-filter
and $hz$-ultrafilter.
\begin{defn}
\label{def17}A $z$-filter is said to be $hz$-filter if it has a
base consisting of hard zero sets in $X$. Maximal $hz$-filter is
called $hz$-ultrafilter.
\end{defn}

Likewise we can define prime $hz$-filter. However it is not that
much relevant in this paper. It is further to be noted that the maximal
$hz$-filter and maximal $z$-filter containing an $hz$-filter are
essentially same as follows from the following proposition.
\begin{prop}
\label{prop18}Any $z$-filter containing a hard zero set is $hz$-filter.
\end{prop}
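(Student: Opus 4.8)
The plan is to exhibit an explicit base of hard zero sets for the given $z$-filter, thereby verifying the defining condition of an $hz$-filter directly. Recall that a subfamily $\mathcal{B}$ of a $z$-filter $\mathcal{F}$ is a base for $\mathcal{F}$ precisely when every member of $\mathcal{F}$ contains some member of $\mathcal{B}$. So I would simply take $\mathcal{B}$ to be the collection of all hard zero sets lying in $\mathcal{F}$ and show it is a base.

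Let $H_{0}\in\mathcal{F}$ be the hard zero set supplied by the hypothesis, and let $Z\in\mathcal{F}$ be arbitrary. Since $\mathcal{F}$ is a $z$-filter, it is closed under finite intersections, so $H_{0}\cap Z\in\mathcal{F}$; and being the intersection of two zero sets, $H_{0}\cap Z$ is again a zero set. The crucial observation is that $H_{0}\cap Z$ is a closed subset of the hard set $H_{0}$, hence by Remark \ref{rem12} it is itself hard in $X$. Thus $H_{0}\cap Z$ is a hard zero set belonging to $\mathcal{F}$ and contained in $Z$.

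Since $Z$ was arbitrary, every member of $\mathcal{F}$ contains a hard zero set from $\mathcal{F}$, so $\mathcal{B}$ is a base for $\mathcal{F}$ consisting of hard zero sets, and $\mathcal{F}$ is an $hz$-filter. I do not anticipate any real obstacle: the whole argument rests on Remark \ref{rem12} (every closed subset of a hard set is hard) together with the closure of a $z$-filter under finite intersection. The only point requiring minor care is the unwinding of the definition of a base, namely checking that the witnessing set $H_{0}\cap Z$ simultaneously lies in $\mathcal{F}$ and sits inside $Z$.
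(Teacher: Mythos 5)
Your proof is correct and follows essentially the same route as the paper: both arguments intersect the given hard zero set with the members of the filter (the paper does this with an arbitrary base $\mathcal{B}$, you with the filter itself), and both rest on the same two facts, namely closure of a $z$-filter under finite intersection and the observation in Remark \ref{rem12} that a closed subset of a hard set is hard. No gap to report.
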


\begin{proof}
If $\script{B}$ is a base for a $z$-filter $\script{A}$ and $H$
be a hard zero set in $\script{A}$, then $\script{B}\cap H=\{H\cap B|B\in\script{B}\}$
is a base of $\script{A}$ consisting of hard zero sets. So this fact
immediately tells us that maximal $hz$-filter is the maximal $z$-filter
containing $hz$-filter.
\end{proof}
We have the following characterization of nearly pseudocompact spaces
using the ideas of $hz$-filter and $hz$-ultrafilters.
\begin{thm}
\label{thm19}The followings are equivalent.
\end{thm}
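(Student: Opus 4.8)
The plan is to route the topological condition of near pseudocompactness through the single bridging fact, recorded in the preliminaries, that a zero set is compact precisely when it belongs to no free $z$-filter (Lemma 4.10 of \cite{gj60}), together with Proposition \ref{prop18}, which lets me pass freely between the notion of an $hz$-filter and that of a $z$-filter containing a hard zero set. Concretely, I expect the three conditions to be that $X$ is nearly pseudocompact, that every $hz$-ultrafilter is fixed, and that every $hz$-filter is fixed, so the task is to loop these together.

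First I would reduce near pseudocompactness to a statement about hard \emph{zero} sets. By Theorem \ref{thm15}, $X$ is nearly pseudocompact iff every hard set is compact, and this trivially forces every hard zero set to be compact. For the reverse reduction I would assume every hard zero set is compact, take an arbitrary hard set $H$ with compact $K \subseteq H$ witnessing hardness, and given any open cover first cover $K$ by finitely many members whose union I call $V$; invoking the characterization in the theorem following Theorem \ref{thm13}, I can trap $H \setminus V$ inside a regular hard zero set $Z_{1}$. Since $Z_{1}$ is compact by hypothesis and $H \setminus V$ is closed in it, $H \setminus V$ is compact, so finitely many further members of the cover suffice and $H$ is compact. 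This identifies near pseudocompactness with the condition that every hard zero set is compact.

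Next I would translate this into filter language. By Lemma 4.10 of \cite{gj60}, every hard zero set is compact iff no hard zero set lies in any free $z$-filter, i.e.\ iff there is no free $z$-filter containing a hard zero set; by Proposition \ref{prop18} these are exactly the $hz$-filters, so the condition becomes that every $hz$-filter is fixed. To reach the $hz$-ultrafilter form I would use that any $hz$-filter extends to a maximal one and that enlarging a filter can only shrink its intersection: if $\mathcal{F}$ is a free $hz$-filter and $\mathcal{U} \supseteq \mathcal{F}$ is an $hz$-ultrafilter, then $\bigcap \mathcal{U} \subseteq \bigcap \mathcal{F} = \emptyset$, so $\mathcal{U}$ is free as well; hence a free $hz$-filter exists iff a free $hz$-ultrafilter exists, and ``every $hz$-ultrafilter is fixed'' is equivalent to the preceding conditions. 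I expect the main obstacle to be the first reduction, from arbitrary hard sets to hard zero sets, where the compactness transfer genuinely requires combining the covering argument with the trapping of $H \setminus V$ in a hard zero set while keeping the compact witness $K$ fixed across the chosen neighbourhood $V$; once this is in place the remaining equivalences are formal consequences of Lemma 4.10 of \cite{gj60}, Proposition \ref{prop18}, and the freeness-preserving extension of filters.
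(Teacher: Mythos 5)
Your proof is correct, but it takes a genuinely different route from the paper's in the one substantive direction. Both arguments dispose of $(1)\Rightarrow(2)$ the same way (Theorem \ref{thm15} makes every hard zero set compact, and by Lemma 4.10 of \cite{gj60} a $z$-filter containing a compact zero set cannot be free), and $(2)\Rightarrow(3)$ is trivial in both. The difference is in getting back to near pseudocompactness: the paper closes the cycle $(3)\Rightarrow(1)$ by contraposition through Theorem \ref{th16} --- if $X$ is not nearly pseudocompact there is an unbounded $f\in C_{H}(X)$, and the hard zero sets $Z_{n}=\{x\in X:|f(x)|\geq n\}$ explicitly generate a free $hz$-ultrafilter. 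You never touch Theorem \ref{th16}; instead you interpolate the condition ``every hard zero set is compact'' and prove it equivalent to (1) by a covering argument (fix the compact witness $K\subseteq H$, cover it by finitely many members of the given cover to form $V$, trap $H\setminus V$ in a regular hard zero set via the theorem following Theorem \ref{thm13}, and use compactness of that zero set), while its equivalence with (2) and (3) is a purely formal dictionary: Lemma 4.10 of \cite{gj60}, Proposition \ref{prop18} identifying $hz$-filters with $z$-filters containing a hard zero set, and the observation that enlarging a free filter only shrinks its intersection. The paper's route buys brevity, since Theorem \ref{th16} is already available and hands over the free $hz$-ultrafilter in two lines; your route buys independence from Theorem \ref{th16} and records a sharpening of Theorem \ref{thm15} that is worth stating on its own: $X$ is nearly pseudocompact if and only if every hard \emph{zero} set is compact. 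Note, however, that neither route escapes the copy-of-$\mathbb{N}$ construction, since you use Theorem \ref{thm15} in both directions and its proof of $(3)\Rightarrow(1)$ is exactly that construction; it is merely packaged differently.
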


\begin{enumerate}
\item X is nearly pseudocompact
\item Every $hz$-filter is fixed
\item Every $hz$-ultrafilter is fixed.
\end{enumerate}
\begin{proof}
$(1)\imply(2)$ As $X$ is nearly pseudocompact, every hard set is
compact by theorem \ref{thm15}, every $hz$-filter is fixed.

$(2)\imply(3)$ Trivial.

$(3)\imply(1)$ Suppose $X$ is not nearly pseudocompact. By theorem
\ref{th16}, there exists $f\in C_{H}(X)$ which is unbounded. Then
$Z_{n}=\{x\in X:|f(x)|\geq n\}$ being closed subset of the hard set
$cl_{X}(X\bs Z(f))$ is hard zero sets in $X$ and hence can be extended
to a $hz$-ultrafilter which is not fixed. So we arrive at a contradiction.
\end{proof}
Let $HX$ be the family of all $hz$-ultrafilters on $X$ and $\eta X$
be the family of all non-$hz$-ultrafilters on $X$. Then $\beta X=HX\cup\eta X.$
We may recall here that $\beta X$ is the family of all $z$-ultrafilters
on $X$.

The following theorem is very important to us.
\begin{thm}
\label{thm20}Every real $hz$-ultrafilter is fixed. That is, $RX\subset\eta X.$
\end{thm}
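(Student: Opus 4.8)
The plan is to argue by contradiction: I would assume that $\mathcal{U}$ is a \emph{free} real $hz$-ultrafilter and derive that it must in fact be fixed. Since $\mathcal{U}$ is an $hz$-ultrafilter, by Definition \ref{def17} together with Proposition \ref{prop18} it has a base of hard zero sets, so it contains at least one hard zero set $H$; by Theorem \ref{thm13} I may fix a compact set $K\subseteq H$ witnessing the hardness of $H$. Because $\mathcal{U}$ is real, it enjoys the countable intersection property (this is the content of the equivalence in Theorem \ref{thm6} together with theorem 5.7 of \cite{gj60}), a fact I will use at the very end.

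The first step is to manufacture, out of $H$, a zero set of $\mathcal{U}$ that is trapped inside a realcompact cozero set. Since $\mathcal{U}$ is assumed free, $\cap\mathcal{U}=\emptyset$, so no point of the compact set $K$ lies in every member of $\mathcal{U}$; for each $p\in K$ I can choose $Z_p\in\mathcal{U}$ with $p\notin Z_p$, and compactness of $K$ yields finitely many whose intersection $Z^{*}\in\mathcal{U}$ satisfies $K\cap Z^{*}=\emptyset$. Then $V=X\bs Z^{*}$ is an open neighbourhood of $K$, and the hardness of $H$ (in the equivalent zero-set form following Theorem \ref{thm13}) produces a zero set $Z_1$ and a realcompact cozero set $S=X\bs Z_2$ with $H\bs V\subseteq Z_1\subseteq S$; here the realcompactness of $S$ uses Theorem \ref{thm8}. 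Since $H\bs V=H\cap Z^{*}$ is a finite intersection of members of $\mathcal{U}$ it belongs to $\mathcal{U}$, and hence so does the larger zero set $Z_1$.

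The decisive step is to transfer the problem into the realcompact space $S$ via the trace $\mathcal{U}|_{S}=\{Z\cap S:Z\in\mathcal{U}\}$. Because $S$ is a cozero set it is $z$-embedded, so $\mathcal{U}|_{S}$ is a genuine $z$-filter on $S$; using that $Z_1\in\mathcal{U}$ with $Z_1\subseteq S$, I would check that $\mathcal{U}|_{S}$ is \emph{prime} (pulling a decomposition $W_1\cup W_2\in\mathcal{U}|_{S}$ back to zero sets of $X$, intersecting with $Z_1$, and invoking primeness of the $z$-ultrafilter $\mathcal{U}$) and that it inherits the countable intersection property (again intersecting any countable subfamily with $Z_1$ and applying the countable intersection property of $\mathcal{U}$). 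Since $S$ is realcompact, Theorem \ref{thm6} then forces $\mathcal{U}|_{S}$ to be fixed; any $p\in\bigcap\mathcal{U}|_{S}$ lies in $Z\cap S\subseteq Z$ for every $Z\in\mathcal{U}$, so $p\in\bigcap\mathcal{U}$, contradicting freeness. This proves every real $hz$-ultrafilter is fixed, and consequently a free real $z$-ultrafilter can never be an $hz$-ultrafilter, i.e. $RX\subseteq\eta X$. I expect the main obstacle to be exactly the verification that the trace $\mathcal{U}|_{S}$ is prime and has the countable intersection property; this is where the hypotheses ``$hz$'' (to secure the hard zero set and hence the realcompact cozero set $S$) and ``real'' (to secure the countable intersection property) are both genuinely used.
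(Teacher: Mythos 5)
Your proposal is correct, but it takes a genuinely different route from the paper's proof. The paper argues directly and very briefly: it takes the hard zero set $H\in\mathcal{U}$, forms the trace $\{H\cap Z:Z\in\mathcal{U}\}$ \emph{on $H$ itself}, observes that this is a prime $z$-filter on $H$ with the countable intersection property, and then concludes it is fixed because \emph{$H$ is realcompact} --- i.e.\ the paper leans on the fact that hard sets are realcompact, a fact it never proves (it is true, going back to Rayburn, but within this paper's self-imposed program it is an unstated lemma; moreover $H$, being merely a zero set, need not be $z$-embedded, so even the claim that the trace is a prime $z$-filter on $H$ is slightly delicate). You avoid that fact entirely: assuming $\mathcal{U}$ free, you use the compact witness $K$ of hardness plus freeness to produce $Z^{*}\in\mathcal{U}$ with $K\cap Z^{*}=\emptyset$, then use the zero-set form of hardness to trap a member $Z_{1}\in\mathcal{U}$ inside a realcompact cozero set $S$, and trace $\mathcal{U}$ onto $S$ instead of onto $H$. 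This costs you the contradiction set-up and some filter bookkeeping, but it buys two things: the argument is self-contained relative to results actually proved in the paper (Theorem \ref{thm6}, Theorem \ref{thm8}, Theorem \ref{thm13} and its zero-set corollary, and the $z$-embedding of cozero sets), and tracing onto a $z$-embedded set makes your verifications of primeness, upward closure, and the countable intersection property clean and rigorous --- each check goes through exactly as you sketch, by intersecting with $Z_{1}$ and pulling back along the $z$-embedding. It is worth noting that your opening steps (escaping $K$ via freeness, then landing in a realcompact cozero set) are precisely the technique the paper itself uses in Theorem \ref{thm21}, so your proof is very much in the spirit of the paper even though it diverges from the proof the paper actually gives for this statement.
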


\begin{proof}
Let $\script{U}$ be a real $hz$-ultrafilter. Let $H\in\script{U}$
be a hard zero set in $X$. Then $H\cap\script{U}=\{H\cap Z:Z\in\script{U}\}$
is a prime $z$-filter on $H$. Moreover as $\script{U}$ has countable
intersection property, $H\cap\script{U}$ has countable intersection
property. So $\bigcap(H\cap\script{U})$ is non empty as $H$ is realcompact
and hence every prime $z$-filter in $H$ with countable intersection
property is fixed. So $\script{U}$ is fixed.
\end{proof}
\begin{thm}
\label{thm21}If $q$ is a free $hz$-ultrafilter on $X.$ Then $q$
contains a regular hard zero set in $X.$
\end{thm}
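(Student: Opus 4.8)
The plan is to begin with an arbitrary hard zero set of $q$ and then exploit freeness to ``shrink below'' its compact core, so that the regular hard zero set delivered by the hardness characterization is forced into $q$ by upward closedness. Concretely, since $q$ is an $hz$-ultrafilter it is in particular a $z$-filter possessing a base of hard zero sets (Definition~\ref{def17}); so I would fix a hard zero set $H\in q$, and by Definition~\ref{def11} together with Theorem~\ref{thm13} choose a compact set $K\subseteq H$ witnessing the hardness of $H$. Freeness of $q$ says precisely that $\bigcap q=\emptyset$.

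Next I would manufacture a member of $q$ that misses $K$. For each $x\in K$, freeness provides a zero set $Z_x\in q$ with $x\notin Z_x$, so the cozero sets $X\bs Z_x$ form an open cover of the compact set $K$. Selecting a finite subcover and intersecting the corresponding $Z_x$ yields a zero set $Z^\ast\in q$ (finite intersections remain in the $z$-filter) with $K\cap Z^\ast=\emptyset$. Setting $H'=H\cap Z^\ast$, I obtain a member $H'\in q$ that is a zero set and satisfies $H'\cap K=\emptyset$; note $H'\neq\emptyset$ since members of a $z$-filter are nonempty. (If $K=\emptyset$ this step is vacuous: one simply takes $H'=H$ and $V=\emptyset$ below.)

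Now I would separate and invoke hardness. As $X$ is Tychonoff, hence regular, the compact set $K$ and the disjoint closed set $H'$ can be enclosed in disjoint open sets; let $V$ be an open neighbourhood of $K$ with $V\cap H'=\emptyset$, so that $H'\subseteq H\bs V$. Applying the property of hardness of $H$ to the neighbourhood $V$ of $K$, in the sharpened form given by condition~(3) of the characterization established immediately after Theorem~\ref{thm13}, produces a regular hard zero set $Z_1$ with $H\bs V\subseteq Z_1$. Consequently $H'\subseteq Z_1$. Since $H'\in q$, $Z_1$ is a zero set, and $q$ is a $z$-filter (thus closed under zero-set supersets), I conclude $Z_1\in q$, which is the desired regular hard zero set belonging to $q$.

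The main obstacle is the tension between freeness and the compact core $K$: the hardness characterization only governs $H\bs V$, which necessarily omits $K$, so one cannot expect $H$ itself to sit inside the regular hard zero set $Z_1$. The crux is therefore to use freeness at the outset to pass from $H$ to a member $H'$ of $q$ avoiding $K$; once that is arranged, the regularization of the enclosing zero set causes no trouble, being exactly the content of condition~(3). The only points I would verify with care are that $Z^\ast$ (and hence $H'$) genuinely lies in $q$, which follows from closure under finite intersections, and that the separation of the compact $K$ from the closed $H'$ is valid in a merely Tychonoff space, which holds because Tychonoff spaces are regular and $K$ is compact.
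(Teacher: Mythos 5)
Your proof is correct and takes essentially the same route as the paper's: both exploit freeness of $q$ to produce a member of $q$ disjoint from the compact core $K$ (you via a finite subcover, the paper via the finite intersection property on $K$), then apply the hardness property to an open neighbourhood of $K$ so that the resulting regular hard zero set contains that member, and conclude $Z_1\in q$ by upward closedness of $z$-filters. The only cosmetic difference is that the paper takes the neighbourhood of $K$ to be $X\backslash Z'$ directly, which makes your extra compact-versus-closed separation step unnecessary, and it regularizes the enclosing zero set by hand rather than citing condition (3).
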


\begin{proof}
Let $q$ be a free $hz$-ultrafilter on $X.$ Let $H$ be a hard zero
set in $q.$ Then there exists a compact set $K$ which satisfies
the property of hardness. Now every member of $q$ does not meet with
$K.$ If it would be so, $q$ turns out to be fixed $z$-ultrafilter.
So there exists a zero set $Z^{'}\in q$ such that $Z'\cap K=\emptyset.$
$X\bs Z'$ is therefore an open neighbourhood of $K$. Hence $Z'\cap H=H\backslash(X\bs Z')$
is completely separated by complement of a realcompact subset. So
there exists a regular zero set $Z_{0}$ containing $Z'\cap H$ and
is contained in a realcompact cozero set in $X.$ Then $Z_{0}$ is
a regular hard zero set containing $Z'\cap H.$ Hence $Z_{0}$ is
a member of $q$ as $Z'\cap H$ is a member of $q$.

We shall now develop a construction of nearly pseudocompactification
of a space $X$ in a formal way. For each space $X$, let $\delta X$
be the collection of all $hz$-ultrafilters along with every fixed
$z$-ultrafilter on $X$. So $\delta X=X\cup HX$. It is clear that
$\delta X$ is a subset of $\beta X$. Likewise we define $\overline{Z}=\{p\in\delta X:Z\in p\}$
for each $Z\in Z(X)$. Then $\{\overline{Z}:Z\in Z(X)\}$ is a base
for closed sets of some topology on $\delta X$. With respect to this
topology $\overline{Z}$ turns out to be $cl_{\delta X}Z$. In fact
this topology is indeed the subspace topology under $\beta X,$ as
already mentioned at the begining of this section. However treating
$\delta X$ as only a subspace of $\beta X$ does no way impede to
the progress of our motivation. We have later shown avoiding any properties
of $\beta X,$ that $\delta X$ is indeed a nearly pseudocompact space
in which $X$ is densely embedded.
\end{proof}
We already know that the following theorem is true for $\beta X$.
Without any modification it can be easily shown that the following
theorem is true in $\delta X$ also.
\begin{thm}
\label{thm22}Any two disjoint zero sets in $X$ has disjoint closure
in $\delta X$ and for any two zero sets $Z_{1},Z_{2}$ in $X$, $cl_{\delta X}Z_{1}\cap cl_{\delta X}Z_{2}=cl_{\delta X}(Z_{1}\cap Z_{2})$.
\end{thm}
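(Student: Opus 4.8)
The plan is to transport the standard argument for $\beta X$ (Theorem 6.5 of \cite{gj60}) essentially verbatim, the only point requiring care being the observation that every point of $\delta X$ is genuinely a $z$-ultrafilter on $X$, so that the $z$-filter axioms are available. Indeed, $\delta X = X\cup HX\subseteq\beta X$, and by Proposition \ref{prop18} each $hz$-ultrafilter coincides with the maximal $z$-filter containing it; together with the fixed $z$-ultrafilters corresponding to points of $X$, this shows that every $p\in\delta X$ is a $z$-ultrafilter. I would also invoke the defining identity $\overline{Z}=cl_{\delta X}Z$, which supplies the membership criterion $p\in cl_{\delta X}Z\iff Z\in p$ for each $Z\in Z(X)$.

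First I would establish the easy inclusion $cl_{\delta X}(Z_{1}\cap Z_{2})\subseteq cl_{\delta X}Z_{1}\cap cl_{\delta X}Z_{2}$ by monotonicity of closure, since $Z_{1}\cap Z_{2}\subseteq Z_{i}$ for $i=1,2$. For the reverse inclusion I would take $p\in cl_{\delta X}Z_{1}\cap cl_{\delta X}Z_{2}$; the membership criterion gives $Z_{1}\in p$ and $Z_{2}\in p$, and since $p$ is a $z$-filter it is closed under finite intersection, whence $Z_{1}\cap Z_{2}\in p$, i.e. $p\in cl_{\delta X}(Z_{1}\cap Z_{2})$. Note that $Z_{1}\cap Z_{2}$ is again a zero set, so $cl_{\delta X}(Z_{1}\cap Z_{2})$ is a legitimate basic closed set. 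This yields the equality $cl_{\delta X}Z_{1}\cap cl_{\delta X}Z_{2}=cl_{\delta X}(Z_{1}\cap Z_{2})$.

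The disjointness statement then drops out as the special case $Z_{1}\cap Z_{2}=\emptyset$: here $cl_{\delta X}(Z_{1}\cap Z_{2})=cl_{\delta X}(\emptyset)=\{p\in\delta X:\emptyset\in p\}=\emptyset$, because no proper $z$-ultrafilter contains the empty set. Hence $cl_{\delta X}Z_{1}\cap cl_{\delta X}Z_{2}=\emptyset$.

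There is no genuine obstacle in this proof; it is purely formal once the membership criterion $p\in cl_{\delta X}Z\iff Z\in p$ is in hand. The only point I would flag is the verification that every point of $\delta X$ obeys the $z$-filter axioms (closure under finite intersection and under supersets, and properness), which is precisely what validates the chain of equivalences $Z_{1},Z_{2}\in p\iff Z_{1}\cap Z_{2}\in p$. This is guaranteed by $\delta X\subseteq\beta X$ together with Proposition \ref{prop18}, and since these axioms hold for every point of the ambient $\beta X$, the argument transfers to the subspace $\delta X$ \emph{without any modification}, exactly as the remark preceding the statement asserts.
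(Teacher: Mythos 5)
Your proof is correct and follows essentially the same route as the paper's: both rest on the membership criterion $p\in cl_{\delta X}Z\iff Z\in p$ together with the $z$-filter axioms (closure under finite intersection, and the impossibility of containing $\emptyset$). The only cosmetic difference is that you obtain the disjointness assertion as the special case $Z_{1}\cap Z_{2}=\emptyset$ of the equality, whereas the paper proves it directly by the same two-line contradiction; your extra care in noting (via Proposition \ref{prop18}) that every point of $\delta X$ is genuinely a $z$-ultrafilter is a sound, if implicit, part of the paper's argument as well.
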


\begin{proof}
Let $Z_{1}$ and $Z_{2}$ be two disjoint zero sets in $X$. Suppose
$p\in cl_{\delta X}Z_{1}\cap cl_{\delta X}Z_{2}$. Then $Z_{1}\in p$
and $Z_{2}\in p$, but $Z_{1}\cap Z_{2}=\phi$, a contradiction. Hence
$cl_{\delta X}Z_{1}\cap cl_{\delta X}Z_{2}=\emptyset$.

Let $p\in cl_{\delta X}Z_{1}\cap cl_{\delta X}Z_{2}$. So $Z_{1}$,
$Z_{2}\in p$. Hence $Z_{1}\cap Z_{2}\in p$. Thus $p\in\overline{Z_{1}\cap Z_{2}}$.
Other side follows trivially.
\end{proof}
\begin{thm}
\label{thm23}If $Z$ is a hard zero set, $cl_{\delta X}Z$ is compact
subset of $\delta X$ and also if $H$ be a hard set in $X$ then
$cl_{\delta X}H$ is compact.
\end{thm}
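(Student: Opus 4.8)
The plan is to treat the two assertions in parallel, the first (hard zero set $Z$) being the clean prototype and the second (hard set $H$) a refinement carrying one genuine extra difficulty. In both cases I would verify compactness of the relevant closure through the closed-set form of the Alexander subbase lemma: since $\{cl_{\delta X}W:W\in Z(X)\}$ is a base, hence a subbase, for the closed sets of $\delta X$, a subspace of $\delta X$ is compact as soon as every subfamily of these basic closed sets meeting the subspace with the finite intersection property has nonempty total intersection inside the subspace.

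For $Z$ a hard zero set, first I would note that the traces $cl_{\delta X}W\cap cl_{\delta X}Z=cl_{\delta X}(W\cap Z)$ (Theorem \ref{thm22}) form a base for the closed sets of $cl_{\delta X}Z$, and that each $W\cap Z$, being a closed subset of the hard set $Z$, is itself a hard zero set (Remark \ref{rem12}). Given a family $\{cl_{\delta X}(W_\alpha\cap Z)\}$ with the finite intersection property, iterating Theorem \ref{thm22} converts finite intersections of these closures into $cl_{\delta X}$ of finite intersections of the $W_\alpha\cap Z$; since $cl_{\delta X}Y$ is empty only when $Y$ is (for $Y\neq\emptyset$ any $x\in Y$ gives $A_x\in cl_{\delta X}Y$), the hard zero sets $\{W_\alpha\cap Z\}$ themselves have the finite intersection property. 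They generate a $z$-filter containing a hard zero set, which is an $hz$-filter by Proposition \ref{prop18}; extending it to an $hz$-ultrafilter $\mathcal U$ gives a point of $HX\subseteq\delta X$ with every $W_\alpha\cap Z\in\mathcal U$, so $\mathcal U$ lies in the whole intersection, and $cl_{\delta X}Z$ is compact.

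For a general hard set $H$ I would run the same machine with the family $\mathcal G=\{W_\alpha\}\cup Z[H]$, where $Z[H]=\{W\in Z(X):H\subseteq W\}$ and $cl_{\delta X}H=\{p\in\delta X:Z[H]\subseteq p\}$. Using points that witness the finite intersection property of $\{cl_{\delta X}W_\alpha\cap cl_{\delta X}H\}$, one checks that $\mathcal G$ again has the finite intersection property, so it extends to a $z$-ultrafilter $\mathcal U$. If $\mathcal U$ is fixed, then $\bigcap\mathcal U$ meets $\bigcap Z[H]=cl_XH=H$ (in a Tychonoff space a closed set is the intersection of the zero sets containing it), producing a point of $H$ in every $cl_{\delta X}W_\alpha\cap cl_{\delta X}H$, and we are done. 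The whole difficulty is therefore concentrated in the case that $\mathcal U$ is free, where I must show $\mathcal U$ still lands in $\delta X$, i.e. that $\mathcal U$ is an $hz$-ultrafilter.

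To do this I would use freeness together with the compact $K\subseteq H$ from the definition of hardness (Definition \ref{def11}): as in Theorem \ref{thm21}, freeness produces a $Z'\in\mathcal U$ with $Z'\cap K=\emptyset$, so $V:=X\bs Z'$ is an open neighbourhood of $K$ and hardness separates $H\cap Z'=H\bs V$ from the complement of a realcompact set $P$ by disjoint zero sets $Z_a\supseteq H\cap Z'$ and $Z_b\supseteq X\bs P$. Here $X\bs Z_b$ is a cozero subset of the realcompact $P$, hence realcompact by Theorem \ref{thm8}, so if I can find $W_0\in Z[H]$ with $W_0\cap Z'\cap Z_b=\emptyset$, then $W_0\cap Z'\subseteq X\bs Z_b$ is a zero set inside a realcompact cozero set, i.e. a hard zero set (Remark \ref{rem12}), lying in $\mathcal U$ and forcing $\mathcal U\in HX\subseteq\delta X$. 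The hard part will be producing this $W_0$, equivalently completely separating the closed set $H$ from the disjoint zero set $D:=Z'\cap Z_b$; in a Tychonoff space disjointness alone is not enough. The piece $H\cap Z'\subseteq Z_a$ is completely separated from $D\subseteq Z_b$ by the disjoint zero sets $Z_a,Z_b$, and any limit point of $H\bs Z'$ that lies in $Z'$ must fall in $H\cap Z'\subseteq Z_a$ and hence miss $D$; the truly delicate point is the part of $H\bs Z'$ lying outside $P$, where it may meet $Z_b$, and I expect to control it by shrinking $V$ around the compact $K$ and exploiting that $K$ is the sole obstruction to hardness. Once $\mathcal U$ is shown to contain a hard zero set it is an $hz$-ultrafilter, it clusters at the whole family, and $cl_{\delta X}H$ is compact; this free case is the intrinsic substitute for the $\beta X$ fact that hardness of $H$ forces $cl_{\beta X}H$ to miss $cl_{\beta X}(\upsilon X\bs X)$.
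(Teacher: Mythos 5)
Your treatment of the first assertion (a hard \emph{zero} set $Z$) is correct and is essentially the paper's own argument: Theorem \ref{thm22} plus Remark \ref{rem12} turn the finite intersection property of basic closed sets into the finite intersection property of the hard zero sets $W_\alpha\cap Z$, Proposition \ref{prop18} lets you extend to an $hz$-ultrafilter, and that ultrafilter is a point of $\delta X$ lying in the whole intersection. The problem is the second assertion. In the free-ultrafilter case you must show that a free $z$-ultrafilter $\mathcal{U}\supseteq Z[H]$ contains a hard zero set, and your route is to produce $W_0\in Z[H]$ with $W_0\cap Z'\cap Z_b=\emptyset$, i.e.\ to completely separate $H$ from $D:=Z'\cap Z_b$; you yourself leave this open (``I expect to control it by shrinking $V$''). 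This is a genuine gap, and it cannot be closed the way you hope. Disjointness of a closed set from a zero set gives no complete separation in a Tychonoff space, and Definition \ref{def11} gives you no purchase here: hardness only constrains $H\bs V$ for neighbourhoods $V$ of $K$, so once a candidate set such as $D$ is disjoint from \emph{all} of $H$, applying hardness to the neighbourhood $X\bs D$ of $K$ yields an empty condition, and shrinking $V$ changes nothing. Concretely, run the ultrafilter dichotomy on $Z_b$: either some member of $\mathcal{U}$ misses $Z_b$ --- in which case that member lies in the realcompact cozero set $X\bs Z_b$ (Theorem \ref{thm8}) and is already hard by Remark \ref{rem12}, so you are done without any $W_0$ --- or else $Z_b\in\mathcal{U}$, hence $D=Z'\cap Z_b\in\mathcal{U}$; but then $D$ meets every member of $Z[H]\subseteq\mathcal{U}$, so the $W_0$ you want \emph{provably does not exist}. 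Thus the entire content of your free case is the claim $Z_b\notin\mathcal{U}$, for which you offer no argument; establishing it intrinsically amounts to re-proving Rayburn's Lemma 1.3 (the equivalence of Definition \ref{def11} with the $\beta X$-definition of hardness), which is exactly the $\beta X$-dependent fact this paper is structured to avoid.

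The paper sidesteps all of this by proving the second assertion with an open-cover argument that reduces to the first, never deciding where an arbitrary free $z$-ultrafilter over $Z[H]$ lives. Given an open cover $\{V_\alpha\}$ of $cl_{\delta X}H$ in $\delta X$, finitely many members cover the compact witness $K\subseteq H$ supplied by Theorem \ref{thm13}; call their union $W$. Hardness applied to the neighbourhood $W\cap X$ of $K$ gives a hard zero set $Z$ with $H\bs W\subseteq Z$, and since $W$ is open, $cl_{\delta X}H\bs W\subseteq cl_{\delta X}(H\bs W)\subseteq cl_{\delta X}Z$, which is compact by the first part; finitely many further members of the cover then finish. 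You should replace your free/fixed ultrafilter analysis for general $H$ by this reduction (or supply an independent proof that every free $z$-ultrafilter containing $Z[H]$ is $hz$, which is a substantially harder task); as it stands, your proof of the second assertion is incomplete.
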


\begin{proof}
Let $Z$ be a hard zero set in $X$. Let $\{Z_{\alpha}:\alpha\in\lambda\}$
be a family of zero sets in $X$ such that $\{\overline{Z}_{\alpha}\cap\overline{Z}:\alpha\in\lambda\}$
is a family of basic closed sets in $\overline{Z}$ having finite
intersection property. Hence $\{Z_{\alpha}\cap Z:\alpha\in\lambda\}$
is a family of hard zero sets in $X$ satisfying finite intersection
property and hence can be extended to a $hz$-ultrafilter $q\in\delta X$.
As $Z_{\alpha},Z\in q$ for all $\alpha$, $q\in\overline{Z}$, $\overline{Z}_{\alpha}$
for all $\alpha$. So $q\in\bigcap_{_{\alpha}}Z_{\alpha}\cap Z$.
Hence $\overline{Z}$ is compact.

Second part: let $H$ be a hard set in $X$. There exists a compact
set $K$ in $H$ satisfying the property of hardness. Let $\{V_{\alpha}:\alpha\in\Lambda\}$
be an open cover of $cl_{\delta X}H$ in $\delta X$. There exists
$\alpha_{1},\alpha_{2},...,\alpha_{n}$ such that $K\subset\bigcup_{i=1}^{n}V_{\alpha_{i}}=W$.
Then there exists a zero set $Z$ and a realcompact cozero set $P$
such that $H\bs W\subset Z\subset P$. It is clear that $Z$ is a
hard zero set in $X$. As $W$ is open in $\delta X$, $cl_{\delta X}H\bs W\subset cl_{\delta X}Z$.
By the first part of the theorem, $cl_{\delta X}Z$ is compact. Hence
$cl_{\delta X}H\bs W$ is also compact. Another set of finitely many
members from the above cover are required to cover $cl_{\delta X}H\bs W$.
So $cl_{\delta X}H$ can be covered by finitely many members of $\{V_{\alpha}:\alpha\in\Lambda\}$.
Therefore $cl_{\delta X}H$ is compact.
\end{proof}
\begin{thm}
\label{thm24}Let $K$ be a compact set in $\delta X$, then $K\cap X$
is hard in $X$. If $H$ be a closed set in $X$ such that $cl_{\delta X}H$
is compact in $\delta X$, then $H$ is hard in $X.$
\end{thm}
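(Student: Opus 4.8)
Theorem \ref{thm24} has two parts, and the plan is to treat them in turn, using the intrinsic characterization of hard sets (Definition \ref{def11} and Theorem \ref{thm13}) together with the compactness results already established in Theorems \ref{thm23} and \ref{thm22}.

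For the first statement, suppose $K$ is compact in $\delta X$ and set $H = K\cap X$. Since $X$ is closed in $\delta X$ (being the set of fixed $z$-ultrafilters, whose traces on $X$ are exactly the zero-set filters converging in $X$), $H$ is closed in $X$. To verify hardness via Theorem \ref{thm13}, I must produce a compact subset of $H$ witnessing the hardness property. The natural candidate is $K\cap X$ itself if it is already compact, but in general it need not be; instead I would argue as follows. Consider a point $q\in K\setminus X$, i.e.\ a free $hz$-ultrafilter in $K$. By Theorem \ref{thm21}, $q$ contains a regular hard zero set $Z_q$, and $cl_{\delta X}Z_q$ is compact by Theorem \ref{thm23}. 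The plan is to cover the "free part" $K\setminus X$ of the compact set $K$ by finitely many such $\overline{Z_q}$, extract a single hard zero set $Z$ whose $\delta X$-closure contains $K\setminus X$, and then show that $H\setminus \operatorname{int}_X Z$, being what remains, is controlled by a compact subset $K_0 = H\cap Z'$ for an appropriate compact witness. The main obstacle here is that covering $K\setminus X$ does not by itself pin down the compact set $K_0\subseteq H$ required by the hardness definition; I expect to use the compactness of $K$ to reduce to finitely many hard zero sets and then take $K_0$ to be the (compact, by Theorem \ref{thm23}) intersection of $H$ with the complement of a suitable open neighbourhood, verifying that $H\setminus G$ lands inside a realcompact cozero set for each open $G\supseteq K_0$.

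For the second statement, the argument runs more cleanly in reverse. Suppose $H$ is closed in $X$ with $cl_{\delta X}H$ compact. The goal, again via Theorem \ref{thm13}, is to exhibit a compact $K\subseteq H$ so that for each open neighbourhood $G$ of $K$, the set $H\setminus G$ is completely separated from the complement of a realcompact set. I would take $K = cl_{\delta X}H \cap X = H$ if $H$ is already compact; otherwise the relevant compact witness is the set of limit points, and I would argue that $cl_{\delta X}H \setminus X$ consists of $hz$-ultrafilters, each containing a hard zero set. Given an open $G\supseteq K$ in $X$, I would extend $G$ to an open set in $\delta X$ covering the compact $cl_{\delta X}H$ together with the finitely many basic closed neighbourhoods $\overline{Z_i}$ coming from hard zero sets $Z_i\in q_i$ for the free points $q_i$, use compactness to get a finite subcover, and thereby trap $H\setminus G$ inside a finite union of hard (hence, by Remark \ref{rem12}, realcompact-cozero-dominated) zero sets. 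Complete separation then follows because $H\setminus G$ is contained in a hard zero set $Z$ with $Z\subseteq X\setminus Z_2$ for a realcompact cozero $X\setminus Z_2$, exactly the condition in Theorem \ref{thm13}.

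The step I expect to be genuinely delicate in both directions is the passage between a cover of the compact set in $\delta X$ and the selection of the single compact witness $K\subseteq H$ demanded by the intrinsic hardness criterion: the definition quantifies over \emph{all} open neighbourhoods $G$ of $K$, so I cannot fix $G$ in advance, and must instead show that one compact $K$ works uniformly. The resolution I anticipate is to let $K$ be precisely the compact set $K\cap X$ (first part) or $cl_{\delta X}H\cap X$ intersected with the non-isolated behaviour (second part), and then for an arbitrary $G\supseteq K$ use that $cl_{\delta X}H\setminus \operatorname{int}_{\delta X}(cl_{\delta X}G)$ is a compact subset of the free $hz$-ultrafilters, each of which supplies a regular hard zero set by Theorem \ref{thm21}; finitely many of these cover by compactness, and their union is a hard zero set sitting inside a realcompact cozero set, delivering the required complete separation.
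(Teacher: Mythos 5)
Your proposal has a genuine gap, and it sits exactly where you yourself flagged the difficulty: the choice of the compact witness required by Definition~\ref{def11}/Theorem~\ref{thm13}. Your proposed resolution --- take the witness to be $K\cap X$ in the first part --- begs the question: $K\cap X$ is precisely the set you are trying to prove hard, and it need not be compact; if it were, hardness would be trivial and the theorem would have no content. The paper's proof hinges on identifying the correct witness, namely $T=K\cap cl_{X}(\eta X\cap X)$, the trace of $K$ on the set of points of $X$ having no hard (equivalently, no realcompact) neighbourhood; this set is closed in $\delta X$, so $T$ is an honestly compact subset of $H=K\cap X$. Given a neighbourhood $V$ of $T$, the paper then works in $\beta X$: the compact set $K\setminus V$ misses $cl_{\beta X}\eta X$, so the two can be separated by disjoint zero-set neighbourhoods $Z_{1},Z_{2}$ in $\beta X$, and $X\setminus(Z_{2}\cap X)$ is realcompact because every free real $z$-ultrafilter lies in $\eta X$; this gives the complete separation of $H\setminus(V\cap X)$ from the complement of a realcompact set. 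Nothing in your outline produces this witness or this separation, and the points it is designed to absorb --- points of $X$ with no hard neighbourhood --- are exactly the ones your argument cannot handle.

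Three concrete steps fail. (i) ``$X$ is closed in $\delta X$'' is false: $X$ is dense in $\delta X$. (The set $H=K\cap X$ is closed in $X$ simply because $K$ is closed in the Hausdorff space $\delta X$.) (ii) Your covering argument is applied to $K\setminus X$, which need not be compact precisely because $X$ need not be open in $\delta X$ (for $X=\mathbb{Q}$ one has $\delta X=\beta\mathbb{Q}$, and $\beta\mathbb{Q}\setminus\mathbb{Q}$ is not compact); moreover the sets $cl_{\delta X}Z_{q}$ with which you cover it are closed sets, not neighbourhoods of $q$, so no finite subcover can be extracted even where compactness holds. (iii) The claim that $cl_{\delta X}H\setminus int_{\delta X}(cl_{\delta X}G)$ ``is a compact subset of the free $hz$-ultrafilters'' is false: it contains the points of $H\setminus G$ lying outside $int_{\delta X}(cl_{\delta X}G)$, and these are points of $X$, not free ultrafilters; among them may be points with no hard neighbourhood, for which Theorem~\ref{thm21} supplies nothing. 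Finally, note that the second assertion of the theorem needs no separate argument at all: since $H$ is closed in $X$ one has $cl_{\delta X}H\cap X=H$, so it is the first assertion applied to the compact set $cl_{\delta X}H$ --- this is exactly how the paper disposes of it.
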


\begin{proof}
Let $H=K\cap X$ is a closed subset of $X$ and let $T=K\cap cl_{X}(\eta X\cap X)$
is a compact subset of $H$. Let $V$ be an open neighbourhood of
$T$ in $\beta X$. Then $K\bs V$ is a compact set not intersecting
$cl_{\beta X}\eta X$. So there exist zero set neighbourhoods $Z_{1}$
and $Z_{2}$ of $K\bs V$ and $cl_{\beta X}\eta X$ respectively in
$\beta X$ with $Z_{1}\cap Z_{2}=\phi$. Then $cl_{\beta X}(Z_{2}\cap X)\subset\eta X$.
Then complement of $Z_{2}\cap X$ in $X$ is realcompact containing
$Z_{1}\cap X$, which again contains $H\bs(V\cap X)$. Hence $H$
is hard as any neighbourhood of $T$ in $X$ is indeed of the form
$V\cap X$ where $V$ is open in $\beta X$.

Second part is trivial as $cl_{\delta X}H\cap X=H$.
\end{proof}
\begin{thm}
A zero set is hard in $X$ if and only if it is not contained in any
z-filter which is not $hz$.
\end{thm}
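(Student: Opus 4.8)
The plan is to reduce the statement to a transparent criterion about membership of hard zero sets, and then dispatch each implication quickly. First I would record the observation that, combining Proposition \ref{prop18} with Definition \ref{def17}, a $z$-filter is $hz$ \emph{if and only if} it contains at least one hard zero set. Indeed, the ``if'' direction is precisely Proposition \ref{prop18}, while the ``only if'' direction is immediate because an $hz$-filter has, by definition, a base of hard zero sets and so contains hard zero sets. Consequently a $z$-filter fails to be $hz$ exactly when it contains no hard zero set at all, and the theorem becomes equivalent to: a zero set $Z$ is hard if and only if every $z$-filter containing $Z$ contains a hard zero set.

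For the forward implication, I would simply note that if $Z$ is hard and $\mathcal{F}$ is any $z$-filter with $Z \in \mathcal{F}$, then $\mathcal{F}$ contains the hard zero set $Z$ itself, so by Proposition \ref{prop18} it is $hz$. Hence $Z$ lies in no non-$hz$ $z$-filter, which is what the forward direction asserts.

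For the converse I would argue by contraposition, constructing an explicit non-$hz$ $z$-filter containing $Z$ under the assumption that $Z$ is not hard. Since the empty set is (vacuously) hard, a non-hard $Z$ is nonempty, so the principal family $\mathcal{F} = \{W \in Z(X) : Z \subseteq W\}$ is a genuine $z$-filter (closed under finite intersections and zero-set supersets, and omitting $\emptyset$) which obviously contains $Z$. The key point is that $\mathcal{F}$ contains no hard zero set: every member $W$ of $\mathcal{F}$ satisfies $Z \subseteq W$, and if such a $W$ were hard, then $Z$, being a zero set (hence closed) contained in the hard set $W$, would itself be hard by Remark \ref{rem12}, contradicting the assumption. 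Therefore $\mathcal{F}$ is a $z$-filter containing $Z$ but no hard zero set, so it is not $hz$, and the contrapositive is complete.

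I expect no serious computational obstacle, since the whole argument is structural. The one step that genuinely carries the proof is the converse, where the correct move is to pass to the contrapositive and invoke the hereditary property of hardness under closed subsets (Remark \ref{rem12}); once one notices that a non-hard $Z$ forces \emph{every} zero set containing it to be non-hard, the naive principal filter generated by $Z$ already supplies the required non-$hz$ $z$-filter, and there is no need to engineer a cleverer filter.
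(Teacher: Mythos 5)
Your proof is correct, and the forward direction coincides with the paper's: both observe that a $z$-filter containing a hard zero set is automatically $hz$ by Proposition \ref{prop18}. The converse, however, is a genuinely different route. You argue by contraposition: if $Z$ is not hard, then the principal $z$-filter $\{W\in Z(X):Z\subseteq W\}$ contains $Z$, and since hardness is inherited by closed subsets (Remark \ref{rem12}), none of its members can be hard; combined with the equivalence that a $z$-filter is $hz$ if and only if it contains a hard zero set (Definition \ref{def17} together with Proposition \ref{prop18}), this filter is not $hz$. Your side conditions are also handled correctly: the empty set is hard (it is compact, or take $K=\emptyset$ in Definition \ref{def11}), so a non-hard $Z$ is nonempty and the principal family is a legitimate $z$-filter. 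The paper instead proves the converse directly and through the $\delta X$ machinery: assuming $Z$ lies only in $hz$-filters, any family of basic closed sets of $cl_{\delta X}Z$ with the finite intersection property extends to a $z$-ultrafilter, which by hypothesis is $hz$ and hence a point of $\delta X$ lying in the intersection; thus $cl_{\delta X}Z$ is compact, and Theorem \ref{thm24} yields hardness. The trade-off is this: your argument is more elementary and self-contained, needing neither the construction of $\delta X$ nor Theorems \ref{thm23}--\ref{thm24}; the paper's argument, because it only ever applies the hypothesis to the $z$-\emph{ultrafilter} it constructs, in fact establishes the formally stronger statement that $Z$ is hard as soon as every $z$-ultrafilter containing it is $hz$. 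Your principal filter is not an ultrafilter, so your method does not yield that strengthening (a non-hard $Z$ may well belong to $hz$-ultrafilters, e.g.\ fixed ones at points where $Z$ meets a compact set). Both are valid proofs of the theorem as stated.
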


\begin{proof}
If $H$ is hard zero set in $X$, as discussed above, any $z$-filter
containing $H$ is $hz$-filter. So it can not be member of any non-$hz$-filter.

For the converse, suppose $H$ is a zero set not containing in any
non-$hz$-filter on $X$. Let $\{\overline{Z}_{\alpha}\cap cl_{\delta X}H\}_{\alpha}$
be a family of basic closed sets in $cl_{\delta X}H$ having finite
intersection property. Then $\{Z_{\alpha}\}_{\alpha}\cup\{H\}$ satisfies
finite intersection property and hence can be extended to a $z$-ultrafilter
$q$. For all $\alpha$, $Z_{\alpha}\in q$ and $H\in q$. By assumption
$q$ is an $hz$-ultrafilter. So $q\in\delta X$. Hence $q\in\bigcap_{\alpha}(cl_{\delta X}H\cap\overline{Z}_{\alpha})$.
Hence $cl_{\delta X}H$ is compact. By theorem \ref{thm24} $H$ is
hard in $X$.
\end{proof}
\begin{thm}
$f\in C_{RC}(X)$ if and only if $Z(f)\in q$, for all $q\in RX$,
that is $RX\subseteq cl_{\beta X}Z(f)$.
\begin{proof}
Let $f\in C_{RC}(X)$. Suppose $Z(f)\notin p$ for some $p\in RX$.
There exists $Z\in p$ so that $Z\cap Z(f)=\emptyset$. Then $Z\subseteq X\bs Z(f)$
which is realcompact. So $Z$ is hard in $X$. Then $p\notin RX$,
a contradiction as every real $hz$-ultrafilter is fixed.

Conversely, suppose $f\in C(X)$ such that $Z(f)\in q$, for all $q\in RX$.
Let $\mathcal{U}$ be a $z$-ultrafilter in $X\backslash Z(f)=D$
(say) with countable intersection property. Then $i^{\#}(\mathcal{U})$
is a prime $z$-filter with countable intersection property and hence
is contained in a real $z$-ultrafilter (say) $p$ on $X.$ If $Z(f)\in p$,
then $\{x\in X:|f(x)|\leq\frac{1}{n}\}$ is a member of $p$, for
all $n$ and $\{x\in X:|f(x)|\geq\frac{1}{n}\}\notin p$, for all
$n.$ As $i^{\#}(\mathcal{U})$ is a prime $z$-filter, $\{x\in X:|f(x)|\leq\frac{1}{n}\}\in i^{\#}(\mathcal{U})$,
for all $n$. Hence $\{x\in X:|f(x)|\leq\frac{1}{n}\}\cap D\in\mathcal{U}$.
As $\mathcal{U}$ is real $z$-ultrafilter on $D$, $\bigcap_{n}\{x\in X:|f(x)|\leq\frac{1}{n}\}\cap D\neq\emptyset$.
So there exists a point $z\in D$, such that $|f(z)|\leq\frac{1}{n}$,
for all $n$. Hence $f(z)=0$, which is a contradiction. So $Z(f)\notin p$.
So $p$ can not be member of $RX.$ So $p$ must be fixed. As $Z(f)\notin p$,
by our convention ($*$), $p\notin Z(f)$. So $p\in D$. As $i^{\#}(\mathcal{U})\subset p$,
$p\in\cap i^{\#}(\mathcal{U})$. As $p\in D$ and $D$ being $z$-embedded,
$p\in\cap(i^{\#}(\mathcal{U})\cap D)=\mathcal{\cap U}$. So $\mathcal{\cap U}$
is fixed. Hence $D$ is realcompact and therefore $f\in C_{RC}(X)$.
\end{proof}
\end{thm}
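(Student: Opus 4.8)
The plan is to read the displayed equivalence as two implications, after first noting that by the definition $\overline{Z(f)}=\{p\in\beta X:Z(f)\in p\}=cl_{\beta X}Z(f)$, the condition ``$Z(f)\in q$ for all $q\in RX$'' is literally the inclusion $RX\subseteq cl_{\beta X}Z(f)$. So only the equivalence ``$f\in C_{RC}(X)\iff Z(f)\in q$ for all $q\in RX$'' needs proof, where $f\in C_{RC}(X)$ means the cozero set $X\bs Z(f)$ is realcompact.

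For the forward implication I would argue by contradiction. Assume $X\bs Z(f)$ is realcompact but some $q\in RX$ omits $Z(f)$. Since $q$ is a $z$-ultrafilter, $Z(f)\notin q$ forces a member $Z\in q$ with $Z\cap Z(f)=\emptyset$, that is $Z\subseteq X\bs Z(f)$. As $Z$ is a zero set sitting inside a realcompact cozero set, Remark \ref{rem12} makes $Z$ hard in $X$; then Proposition \ref{prop18} shows $q$ is an $hz$-ultrafilter. But $q$ is real, so Theorem \ref{thm20} forces $q$ to be fixed, contradicting $q\in RX$. This direction is short and uses only the three cited facts.

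For the converse I would use the realcompactness criterion of Theorem \ref{thm6}: writing $D=X\bs Z(f)$, it suffices to prove that every $z$-ultrafilter $\mathcal{U}$ on $D$ with the countable intersection property is fixed. The natural move is to transport $\mathcal{U}$ to $X$ via the inclusion $i:D\to X$: then $i^{\#}(\mathcal{U})$ is a prime $z$-filter on $X$ with the countable intersection property, so by Lemma \ref{lem7} it lies in a unique real $z$-ultrafilter $p$ on $X$. The crux is to show $Z(f)\notin p$; if $Z(f)\in p$, realness of $p$ together with primeness of $i^{\#}(\mathcal{U})$ would place each $\{x:|f(x)|\le\frac1n\}$ in $i^{\#}(\mathcal{U})$, hence $\{x:|f(x)|\le\frac1n\}\cap D$ in $\mathcal{U}$, and the countable intersection property of $\mathcal{U}$ would produce a point of $D$ that is a common zero of $f$, which is impossible since $D=X\bs Z(f)$. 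Once $Z(f)\notin p$, the hypothesis rules out $p\in RX$, so $p$, being real, must be fixed; by the convention $(*)$ its limit point lies in $D$. Finally, because $D$ is a cozero set and hence $z$-embedded, I would transfer this fixedness back, obtaining $p\in\bigcap(i^{\#}(\mathcal{U})\cap D)=\bigcap\mathcal{U}$, so $\mathcal{U}$ is fixed and $D$ is realcompact.

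The main obstacle is this converse, and within it the bookkeeping that links $\mathcal{U}$ on $D$ with $p$ on $X$: one must keep straight that $i^{\#}$ preserves both primeness and the countable intersection property, that $\mathcal{U}=i^{\#}(\mathcal{U})\cap D$ by $z$-embeddedness of $D$, and that a fixed real $z$-ultrafilter whose limit point lies in $D$ restricts to a fixed ultrafilter on $D$. The forward direction is essentially immediate once Theorem \ref{thm20} and Remark \ref{rem12} are in hand.
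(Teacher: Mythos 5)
Your proposal is correct and follows essentially the same route as the paper's own proof: the forward direction via Remark \ref{rem12}, Proposition \ref{prop18} and Theorem \ref{thm20}, and the converse by pushing a $z$-ultrafilter on $D=X\bs Z(f)$ forward along the inclusion, invoking Lemma \ref{lem7}, ruling out $Z(f)\in p$ by the same primeness/countable-intersection contradiction, and transferring fixedness back through $z$-embeddedness of the cozero set $D$. The only slip is attributing the step $\{x:|f(x)|\le\frac1n\}\in i^{\#}(\mathcal{U})$ to ``realness of $p$''; what is actually used is the assumption $Z(f)\in p$ (which excludes $\{x:|f(x)|\ge\frac1n\}$ from $p$) together with primeness of $i^{\#}(\mathcal{U})\subseteq p$ — realness of $p$ enters only later, to conclude that $p$ is fixed once $p\notin RX$.
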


\begin{thm}
If $p\in\delta X\backslash X$, then $p$ has a compact neighbourhood
in $\delta X.$
\end{thm}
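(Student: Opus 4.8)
The plan is to exhibit an explicit compact neighbourhood of $p$, namely the $\delta X$-closure of the regular hard zero set that $p$ carries. Since $p\in\delta X\bs X$, it is a free $hz$-ultrafilter, so Theorem \ref{thm21} applies and $p$ contains a regular hard zero set $Z_{0}$. The first step is to extract from the \emph{proof} of Theorem \ref{thm21} slightly more than its statement: there $Z_{0}$ arises as $Z_{0}=\{x\in X:h(x)\le\frac{1}{2}\}$ for some $h\in C(X)$ with $0\le h\le1$, obtained by completely separating a member $M=Z'\cap H$ of $p$ from the complement of a realcompact set, so that $M\subseteq\{x:h(x)=0\}\subseteq int_{X}Z_{0}$. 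Recording this interior witness $M\in p$ is the crucial extra piece of information.

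Next I would produce the basic open set sandwiching $p$ inside $cl_{\delta X}Z_{0}=\overline{Z_{0}}$. Put $Z_{1}=\{x\in X:h(x)\ge\frac{1}{2}\}$, a zero set of $X$. Two facts need checking. First, $X\bs Z_{0}=\{x:h(x)>\frac{1}{2}\}\subseteq Z_{1}$, equivalently $X\bs Z_{1}\subseteq Z_{0}$. Second, $Z_{1}\cap M=\emptyset$ because $M\subseteq\{h=0\}$; since $M\in p$ and $p$ is a $z$-ultrafilter (a maximal $hz$-filter is a $z$-ultrafilter by Proposition \ref{prop18}), a zero set disjoint from a member of $p$ cannot lie in $p$, whence $Z_{1}\notin p$, i.e. $p\in\delta X\bs\overline{Z_{1}}$.

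Then I would verify the inclusion $\delta X\bs\overline{Z_{1}}\subseteq\overline{Z_{0}}$. If $q\in\delta X$ and $Z_{1}\notin q$, then (as every element of $\delta X$ is a $z$-ultrafilter, either some $A_{x}$ or an $hz$-ultrafilter) some $W\in q$ is disjoint from $Z_{1}$, so $W\subseteq X\bs Z_{1}\subseteq Z_{0}$ and therefore $Z_{0}\in q$, that is $q\in\overline{Z_{0}}$. Since $\overline{Z_{1}}$ is a basic closed set, $\delta X\bs\overline{Z_{1}}$ is open, so we have produced an open set with $p\in\delta X\bs\overline{Z_{1}}\subseteq\overline{Z_{0}}$, proving $\overline{Z_{0}}$ is a neighbourhood of $p$. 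Finally $Z_{0}$ is hard, so $\overline{Z_{0}}=cl_{\delta X}Z_{0}$ is compact by Theorem \ref{thm23}, and this is the desired compact neighbourhood.

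The main obstacle is the second fact above, namely securing $Z_{1}\notin p$; equivalently, guaranteeing that $p$ actually has a member buried in $int_{X}Z_{0}$ rather than merely clustering on the boundary of $Z_{0}$. The bare statement ``$p$ contains some regular hard zero set'' does not by itself furnish such an interior member, which is exactly why I would reach back into the construction in Theorem \ref{thm21} to retain the inner witness $M\subseteq\{h=0\}$ and the defining function $h$. Once $h$ and $M$ are in hand, everything else is a routine translation between membership in $z$-ultrafilters and the basic closed sets $\overline{Z}$.
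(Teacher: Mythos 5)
Your proof is correct, but it is not the proof the paper gives; the two share a skeleton (the free ultrafilter $p$ must escape the compact set $K$ witnessing hardness, complete separation then produces a hard zero set, and Theorem \ref{thm23} supplies compactness), yet they execute it quite differently. The paper never invokes Theorem \ref{thm21}: it works topologically inside $\delta X$, separating $p$ from $K$ by disjoint \emph{open sets} $U,V$ of $\delta X$ (tacitly using that $\delta X$ carries the Hausdorff subspace topology from $\beta X$), applying hardness to the neighbourhood $V\cap X$ of $K$ to get $H\backslash V\subseteq Z_{1}\subseteq X\backslash Z_{2}$ with $X\backslash Z_{2}$ realcompact, invoking Theorem \ref{thm22} to conclude $p\notin cl_{\delta X}Z_{2}$, and then taking a zero-set neighbourhood $W$ of $p$ in $\delta X$ missing $Z_{2}$; the compact neighbourhood it exhibits is $cl_{\delta X}(W\cap X)$. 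You instead stay entirely at the level of $z$-ultrafilter combinatorics and the closed base $\left\{\overline{Z}\right\}$: freeness is used filter-theoretically (a member $Z'\in p$ misses $K$, which is precisely the opening move in the proof of Theorem \ref{thm21}), the separating function $h$ is retained, and the compact neighbourhood is $\overline{Z_{0}}$ itself, certified by the sandwich $p\in\delta X\backslash\overline{Z_{1}}\subseteq\overline{Z_{0}}$. What your route buys is independence from Theorem \ref{thm22} and from any separation axiom of $\delta X$ --- only basic closed sets and standard $z$-ultrafilter facts are needed --- which is arguably even closer to the paper's stated goal of a from-scratch construction. Its cost, which you correctly identify as the crux, is that the bare statement of Theorem \ref{thm21} is too weak (membership $Z_{0}\in p$ alone does not make $\overline{Z_{0}}$ a neighbourhood, since $p$ could sit over the boundary of $Z_{0}$), forcing you to re-open that theorem's proof to keep the witness $M$ and the function $h$. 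A cleaner packaging of your argument would be to restate Theorem \ref{thm21} in the stronger form you actually use: every free $hz$-ultrafilter contains a member that is completely separated from the complement of a hard zero set also belonging to it.
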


\begin{proof}
: Let $p\in\delta X\backslash X.$ There exists a hard zero set $H$
in $p.$ Let $K$ be the compact subset of $H$ satisfying the property
of hardness. Let $U$ and $V$ be two $\delta X$- open sets such
that $p\in U$ and $K\subseteq V$ with $U\cap V=\emptyset$, so $p\notin cl_{\delta X}V$.
Now $H\backslash V=H\backslash(V\cap X)$ is contained in a zero set
$Z_{1}$ and is contained in a realcompact cozero set $X\backslash Z_{2}$,
where $Z_{2}$ is a zero set in $X.$ Then $p\notin cl_{\delta X}Z_{2}$
as $p\in cl_{\delta X}(H\backslash V)\subseteq Z_{1}$ and $Z_{1}\cap Z_{2}=\emptyset$.
Then there exists a zero set neighbourhood $W$ of $p$ in $\delta X$
such that $W\cap Z_{2}=\emptyset$. Then $W\cap X\subseteq X\backslash Z_{2}$.
Hence $W\cap X$ is hard in $X$ and $p\in W\subseteq cl_{\delta X}(W\cap X)$
which is compact. So $p$ has a compact neighbourhood in $\delta X.$
\end{proof}
\begin{thm}
$cl_{X}(\eta X\cap X)$ is closed in $\delta X$and is precisely the
set of points having no hard neighbourhood or equivalently any realcompact
neighbourhood.
\begin{proof}
It is clear that $cl_{X}(\eta X\cap X)$ is a closed in $X$. It is
enough to show that no point of $\delta X\bs X$ is a limit point
of $cl_{X}(\eta X\cap X)$. Let $p\in\delta X\bs X$. By previous
theorem there exists a compact neighbourhood $K$ of $p$ in $\delta X$
such that $p\in int_{\delta X}K\subseteq K$. Then $int_{\delta X}K\cap(\eta X\cap X)=\phi$,
otherwise suppose $x\in$ $int_{\delta X}K\cap(\eta X\cap X)\neq\phi$,
there exists a zero set $Z$ in $X$ containing $x$ and contained
in $int_{\delta X}K\cap(\eta X\cap X)$. Then $cl_{\delta X}Z$ being
a closed subset of $K$, is compact. So by theorem \ref{thm23} $Z$
is hard in $X$. As $x\in Z$, by our convention ($*$), $Z\in x$.
So $x$ is a fixed $hz$-ultrafilter and hence does not belongs to
$\eta X$, a contradiction. Therefore $int_{\delta X}K\cap(\eta X\cap X)=\phi$
which further implies $int_{\delta X}K\cap cl_{X}(\eta X\cap X)=\phi$.
Thus $cl_{X}(\eta X\cap X)$ is closed in $\delta X$.

Second part: it is clear that the set of points having hard neighbourhood
is an open subset of $X$ and does not intersect $(\eta X\cap X)$.
Hence it does not intersect $cl_{X}(\eta X\cap X)$. So no point of
$cl_{X}(\eta X\cap X)$ has hard (equivalently realcompact) neighbourhood
in $X$. Conversely suppose a point $p$ in $X$ does not belong to
$cl_{X}(\eta X\cap X)$, then $p$ does not belong to $cl_{\beta X}\eta X$.
Let $W$ and $L$ be zero set neighbourhoods of $cl_{\beta X}\eta X$
and $p$ respectively such that $W\cap L=\phi$ as $\beta X$ is Tychonoff.
Let $W\cap X=Z(f)$ and $L\cap X=Z(g)$. Then $\eta X$ is contained
in $cl_{\beta X}Z(f)$ and $Z(g)$ is a neighbourhood of $p$ in $X$.
Then $f\in C_{RC}(X)$. Since $Z(g)$ is contained in $X\bs Z(f)$
which is realcompact, $Z(g)$ is a hard neighbourhood of $p$. So
$cl_{X}(\eta X\cap X)$ is precisely the set of points having no hard
neighbourhood or equivalently any realcompact neighbourhood.
\end{proof}
\end{thm}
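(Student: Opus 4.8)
The plan is to prove the two assertions separately. \emph{Closedness in $\delta X$.} Since $\eta X\cap X\subseteq X$ and zero sets form a base for the closed sets of $X$, the set $cl_{X}(\eta X\cap X)$ is plainly closed in $X$; as $X$ is a subspace of $\delta X$ it therefore suffices to check that no point of $\delta X\bs X$ is a limit point of it. I would take $p\in\delta X\bs X$ and invoke the preceding theorem on compact neighbourhoods to get a compact $K$ with $p\in int_{\delta X}K$, then claim $int_{\delta X}K\cap(\eta X\cap X)=\emptyset$. Indeed, if some $x$ lay in this intersection I would pick a zero set $Z$ with $x\in Z\subseteq int_{\delta X}K\cap X$, so $cl_{\delta X}Z$ is a closed subset of the compact $K$ and hence compact; by Theorem \ref{thm24} $Z$ is then hard, and since $x\in Z$ this forces $x$ to be a fixed $hz$-ultrafilter, contradicting $x\in\eta X$. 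Thus $int_{\delta X}K$ is a $\delta X$-neighbourhood of $p$ missing $\eta X\cap X$, so $p$ is not a limit point.

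\emph{The easy inclusion.} Let $U$ be the set of points of $X$ possessing a hard neighbourhood; it is open. I would first observe $U\cap(\eta X\cap X)=\emptyset$: if $x\in int_{X}H$ with $H$ hard, choose a zero set $Z$ with $x\in Z\subseteq H$; by Remark \ref{rem12} $Z$ is a hard zero set through $x$, so $x$ is an $hz$-ultrafilter and $x\notin\eta X$. Being open and disjoint from $\eta X\cap X$, the set $U$ is disjoint from $cl_{X}(\eta X\cap X)$, so every point of $cl_{X}(\eta X\cap X)$ lacks a hard (equivalently, realcompact) neighbourhood.

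\emph{The reverse inclusion.} For the converse I would argue contrapositively: given $p\in X$ with $p\notin cl_{X}(\eta X\cap X)$, I aim to produce a hard neighbourhood of $p$ by passing to $\beta X$. Granting the key step $p\notin cl_{\beta X}\eta X$ (discussed below), since $\beta X$ is compact Hausdorff, hence normal, and $cl_{\beta X}\eta X$ is closed, I can choose disjoint zero-set neighbourhoods $L\ni p$ and $W\supseteq cl_{\beta X}\eta X$. Put $Z(f)=W\cap X$ and $Z(g)=L\cap X$. Then $\eta X\subseteq W\subseteq cl_{\beta X}Z(f)$, and since $RX\subseteq\eta X$ by Theorem \ref{thm20}, I get $RX\subseteq cl_{\beta X}Z(f)$; by the characterization $f\in C_{RC}(X)\iff RX\subseteq cl_{\beta X}Z(f)$ this gives $f\in C_{RC}(X)$, i.e.\ $X\bs Z(f)$ is a realcompact cozero set. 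Disjointness of $L$ and $W$ yields $Z(g)\subseteq X\bs Z(f)$, and $Z(g)$ is a neighbourhood of $p$; a zero set inside a realcompact cozero set is hard by Remark \ref{rem12}, so $Z(g)$ is the desired hard neighbourhood.

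\emph{The main obstacle.} The crux is the step $p\notin cl_{X}(\eta X\cap X)\imply p\notin cl_{\beta X}\eta X$, equivalently $cl_{\beta X}\eta X\cap X\subseteq cl_{X}(\eta X\cap X)$. The hypothesis only supplies a zero-set neighbourhood $Z(g)$ of $p$ all of whose \emph{points} are $hz$-ultrafilters, which is strictly weaker than $Z(g)$ being a hard zero set; so nothing yet prevents a \emph{free} non-$hz$ $z$-ultrafilter from containing $Z(g)$ and clustering at $p$. As a structural aid I would first note that $cl_{\beta X}\eta X$ cannot meet $\delta X\bs X$: a free $hz$-ultrafilter $r$ has a hard zero-set neighbourhood whose $\beta X$-closure is compact by Theorem \ref{thm23}, lies entirely in $HX$, and, by density of $X$, is a genuine $\beta X$-neighbourhood of $r$ disjoint from $\eta X$. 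The delicate remaining task is to exclude free non-$hz$ $z$-ultrafilters accumulating at $p\in X$; for this I would combine the closedness established in the first part with the characterization that a zero set is hard precisely when it lies in no non-$hz$ $z$-filter, using it to upgrade the pointwise condition ``every point of $Z(g)$ is $hz$'' to a hardness/separation statement strong enough to push $\eta X$ off $p$. I expect this upgrade to be the main difficulty of the proof.
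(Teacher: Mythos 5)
Your first part and the ``easy inclusion'' are correct and coincide with the paper's own argument (your citation of Theorem \ref{thm24} is in fact the right one; the paper cites Theorem \ref{thm23}, which is the reverse implication), and your auxiliary claim that $cl_{\beta X}\eta X$ cannot meet $\delta X\bs X$ is also correct. The step you flag as the main obstacle, namely $p\notin cl_{X}(\eta X\cap X)\imply p\notin cl_{\beta X}\eta X$, is exactly the point at which the paper offers nothing: its proof simply asserts ``then $p$ does not belong to $cl_{\beta X}\eta X$'' with no justification. Your diagnosis that this assertion carries the entire weight of the converse is accurate, and in fact it is \emph{equivalent} to the conclusion being proved: having a hard zero-set neighbourhood of $p$ is equivalent to $p\notin cl_{\beta X}\eta X$ (one direction is the paper's $C_{RC}$ argument; conversely, if $W$ is a hard zero-set neighbourhood of $p$, pick $h\in C(X)$ with $h(p)=1$ and $h=0$ off $int_{X}W$; then $Z=Z(h)$ satisfies $W\cup Z=X$ and $p\notin Z$, every non-$hz$ ultrafilter contains $Z$ by primeness and Proposition \ref{prop18}, so $\eta X\subseteq cl_{\beta X}Z$ while $p\notin cl_{\beta X}Z$). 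So the paper's proof of the second assertion is circular, and your proposal reproduces it faithfully up to the point where it honestly stalls.

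The more important point is that the ``upgrade'' you hope for does not exist: the converse inclusion, and hence the second assertion of the theorem, is false. Let $L_{n}=\{2^{-n}\}\times[0,\omega_{1})$ for $n\in\omega$, let $p_{\infty}=(0,\omega_{1})$, and let $X=\{p_{\infty}\}\cup\bigcup_{n}L_{n}$, a subspace of $\mathbb{R}\times[0,\omega_{1}]$ (so $X$ is Tychonoff and each $L_{n}$ is a clopen copy of $[0,\omega_{1})$). Every point of $L_{n}$ lies in a compact clopen zero set $\{2^{-n}\}\times[0,\alpha]$, and $\{p_{\infty}\}$ is itself a compact zero set, being $Z(g)$ for $g(t,\xi)=t$. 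Compact sets are hard (take $K=H$ in Definition \ref{def11}), so every point of $X$ lies in a hard zero set: $\eta X\cap X=\emptyset$ and therefore $cl_{X}(\eta X\cap X)=\emptyset$. Yet $p_{\infty}$ has no hard neighbourhood. Indeed, any closed neighbourhood $H$ of $p_{\infty}$ contains a tail $T_{m}=\{2^{-m}\}\times(\alpha,\omega_{1})$ of some (in fact cofinitely many) $L_{m}$; given any compact $K_{0}$, its trace on $L_{m}$ is bounded, say $K_{0}\cap L_{m}\subseteq\{2^{-m}\}\times[0,\gamma]$, so $V=(X\bs L_{m})\cup(\{2^{-m}\}\times[0,\gamma])$ is open, contains $K_{0}$, and $H\bs V$ contains a closed copy of a tail of $[0,\omega_{1})$, which is pseudocompact and noncompact, hence not realcompact by the fact quoted after Definition \ref{def5}; since $H\bs V$ is closed in $X$, complete separation of $H\bs V$ from the complement of a realcompact set $P$ would make that tail a closed, hence realcompact, subspace of $P$ --- impossible. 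So $p_{\infty}$ is a point with no hard (equivalently, no realcompact) neighbourhood that does not lie in $cl_{X}(\eta X\cap X)$; correspondingly $p_{\infty}\in cl_{\beta X}\eta X$, witnessed by $z$-ultrafilters extending the filters of clopen tails of the $L_{n}$'s, which contain no hard zero set and accumulate at $p_{\infty}$. This realizes precisely the scenario you warned about (a free non-$hz$ ultrafilter clustering at a point all of whose nearby points lie in hard zero sets), and it shows the obstruction is not a missing lemma but a counterexample: ``lying in a hard zero set'' cannot be upgraded to ``having a hard neighbourhood.'' Only the first assertion (closedness of $cl_{X}(\eta X\cap X)$ in $\delta X$) and the easy inclusion survive; the gap you identified is not yours to fill, because neither you nor the paper can.
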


\begin{thm}
Union of hard and compact set is hard.
\end{thm}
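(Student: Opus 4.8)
The plan is to prove the statement directly from Definition \ref{def11}, choosing the witnessing compact set cleverly so that the compact summand is absorbed into every relevant neighbourhood. Suppose $H$ is hard in $X$ with compact set $K_{H}$ satisfying the property of hardness, and let $C$ be a compact subset of $X$. First I would record that $H\cup C$ is closed in $X$: the set $H$ is closed because hard sets are closed, and $C$ is closed because $X$ is Tychonoff (hence Hausdorff) and compact subsets of Hausdorff spaces are closed.

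The key idea is to take $K=K_{H}\cup C$ as the candidate compact set witnessing the hardness of $H\cup C$; this is compact as a finite union of compact sets. Now let $V$ be any open neighbourhood of $K$. Since $C\subseteq K\subseteq V$, we have $C\bs V=\emptyset$, and therefore
\[
(H\cup C)\bs V=(H\bs V)\cup(C\bs V)=H\bs V.
\]
On the other hand, $K_{H}\subseteq K\subseteq V$, so $V$ is also an open neighbourhood of $K_{H}$. By the property of hardness of $H$, the set $H\bs V$ is completely separated from the complement of some realcompact subset $P$ of $X$. Combining the two observations, $(H\cup C)\bs V=H\bs V$ is completely separated from the complement of the realcompact set $P$.

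Since $H\cup C$ is closed in $X$ and the compact set $K=K_{H}\cup C$ satisfies the property of hardness, Definition \ref{def11} shows that $H\cup C$ is hard in $X$. I do not expect any genuine obstacle here: the only point requiring care is the set identity $(H\cup C)\bs V=H\bs V$, which hinges on the inclusion $C\subseteq V$ guaranteed by our choice of $K$. As an alternative, one could argue through $\delta X$ using Theorem \ref{thm23} and Theorem \ref{thm24}: the closure $cl_{\delta X}(H\cup C)=cl_{\delta X}H\cup C$ is compact as a union of the compact set $cl_{\delta X}H$ and the compact set $C$, whence $H\cup C$ is hard; but the direct argument above is shorter and avoids invoking the extension $\delta X$.
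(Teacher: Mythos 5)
Your proposal is correct and follows essentially the same route as the paper's own proof: both take the union $K_{H}\cup C$ of the witnessing compact set with the compact summand as the new compact set satisfying the property of hardness. The only difference is that you spell out the step the paper dismisses as ``straightforward to check,'' namely that $C\subseteq V$ forces $(H\cup C)\bs V=H\bs V$, so the hardness of $H$ finishes the argument.
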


\begin{proof}
As $H$ is hard, we have a compact set $T$ satisfying the property
of hardness. Let $K$ be a compact set. Then $W=K\cup T$ is also
compact. Then it is straight forward to check that any open neighbourhood
$G$ of $W$. $(H\cup K)\bs G$ can be completely separated from the
complement of realcompact cozero set. Hence $H\cup K$ is hard in
$X$.
\end{proof}
\begin{thm}
For any $p\in\eta X$, any $\beta X$- open neighbourhood of $p$
contains a free real $z$-ultrafilter.
\end{thm}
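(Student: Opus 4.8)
The plan is to reduce to a basic open neighbourhood and then derive a contradiction from the assumption that it omits every free real $z$-ultrafilter, the key tool being the earlier characterization of $C_{RC}(X)$ in terms of $RX$. First I would record what the hypothesis $p\in\eta X$ really says: since $p$ is a non-$hz$-ultrafilter, Proposition \ref{prop18} forces $p$ to contain no hard zero set at all, because any $z$-ultrafilter containing a hard zero set would be an $hz$-ultrafilter. This is the only place the hypothesis is used, so I isolate it at the start.

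Next I would pass from an arbitrary neighbourhood to a basic one. As $\{cl_{\beta X}Z:Z\in Z(X)\}$ is a base for the closed sets of $\beta X$, any open $U\ni p$ satisfies $\beta X\setminus U=\bigcap_\alpha cl_{\beta X}Z_\alpha$; since $p\notin\beta X\setminus U$, some $Z_\alpha\notin p$ and $p\in\beta X\setminus cl_{\beta X}Z_\alpha\subseteq U$. Writing $Z$ for this $Z_\alpha$, it suffices to find a free real $z$-ultrafilter lying in $\beta X\setminus cl_{\beta X}Z$, that is, one not containing $Z$. Because $p$ is a $z$-ultrafilter with $Z\notin p$, I can choose $Z'\in p$ with $Z\cap Z'=\emptyset$ (a zero set lies in a $z$-ultrafilter precisely when it meets every member). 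By the first paragraph $Z'$ is not hard.

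Now suppose, toward a contradiction, that $\beta X\setminus cl_{\beta X}Z$ contains no free real $z$-ultrafilter, i.e. $RX\subseteq cl_{\beta X}Z$. Writing $Z=Z(f)$ and applying the earlier theorem that $f\in C_{RC}(X)$ iff $RX\subseteq cl_{\beta X}Z(f)$, I conclude $f\in C_{RC}(X)$, so the cozero set $X\setminus Z$ is realcompact. But $Z\cap Z'=\emptyset$ gives $Z'\subseteq X\setminus Z$, a realcompact cozero set, so $Z'$ is hard by Remark \ref{rem12}. Since $Z'\in p$, this contradicts $p$ containing no hard zero set; hence $\beta X\setminus cl_{\beta X}Z\subseteq U$ already contains a free real $z$-ultrafilter, as required.

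I expect the crux to be spotting the bridge between the purely topological statement $RX\subseteq cl_{\beta X}Z$ and the realcompactness of the cozero set $X\setminus Z$: it is exactly the $C_{RC}(X)$-characterization that converts the failure of the conclusion into realcompactness, which in turn makes the disjoint member $Z'\in p$ hard and collides with $p\in\eta X$. The remaining ingredients — reducing to a basic neighbourhood and extracting a member of $p$ disjoint from $Z$ — are routine facts about the base of $\beta X$ and about $z$-ultrafilters.
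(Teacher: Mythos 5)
Your proof is correct and follows essentially the same route as the paper's: assume the basic neighbourhood $\beta X\setminus cl_{\beta X}Z$ omits all of $RX$, use the $C_{RC}(X)$-characterization to deduce that $X\setminus Z$ is realcompact, and then the member of $p$ disjoint from $Z$ becomes a hard zero set, contradicting $p\in\eta X$ via Proposition \ref{prop18}. The only difference is that you spell out the reduction from an arbitrary open neighbourhood to a basic one, a step the paper leaves implicit.
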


\begin{proof}
Let $p\in\eta X.$ So $p$ is a non-$hz$-ultrafilter on $X.$ For
$Z\in Z[X]$, let $p\in\beta X\backslash\overline{Z}$, Then $p$
is not a member of $\overline{Z}$, so $Z\notin p$. There exists
a member $W$ of $p$ such that $W\cap Z=\emptyset$. Suppose $Z$
belongs to all free real $z$-ultrafilter. Then $X\backslash Z$ is
realcompact. So $W$ is hard in $X$. This contradicts $p$ to be
non-$hz$-ultrafilter. So there exists a free real $z$-ultrafilter
$q$ such that $q\notin\overline{Z}$. Hence $q\in\beta X\backslash\overline{Z}$.
\end{proof}
The above theorem shows that $cl_{\beta X}\eta X$ is precisely the
$cl_{\beta X}RX.$

\begin{thm}
$\delta X$ is nearly pseudocompact.
\end{thm}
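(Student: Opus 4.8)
The plan is to verify near pseudocompactness of $\delta X$ directly through the $hz$-ultrafilter criterion of Theorem \ref{thm19} applied to the space $\delta X$: I will show that $\delta X$ admits no free $hz$-ultrafilter. So suppose $\mathcal{Q}$ is a free $hz$-ultrafilter on $\delta X$. The argument proving Theorem \ref{thm21} uses only the definition of hardness and is insensitive to the ambient space, so it applies verbatim in $\delta X$ and shows that freeness forces $\mathcal{Q}$ to contain a hard zero set $Z_{1}$ of $\delta X$ that lies inside a realcompact cozero set $R$ of $\delta X$. If such a $Z_{1}$ is compact, then $Z_{1}$ is a compact zero set belonging to the free $z$-filter $\mathcal{Q}$, contradicting Lemma 4.10 of \cite{gj60}; hence no free $hz$-ultrafilter exists and $\delta X$ is nearly pseudocompact.

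\textbf{Reducing compactness of $Z_{1}$ to a statement in $X$.} To prove $Z_{1}$ compact I would pass to $\beta X$, using that $X\subseteq\delta X\subseteq\beta X$ with $X$ dense and $C^{*}$-embedded gives $\beta(\delta X)=\beta X$, so that closures behave as in Theorem \ref{thm22}, and $\beta X\bs\delta X=\eta X\bs X$ is exactly the set of free non-$hz$-ultrafilters. Since $Z_{1}$ is closed in $\delta X$ we have $cl_{\beta X}Z_{1}\cap\delta X=Z_{1}$, so it suffices to show $cl_{\beta X}Z_{1}\subseteq\delta X$. A point $q\in cl_{\beta X}Z_{1}\bs\delta X$ is a free non-$hz$-ultrafilter whose underlying $z$-ultrafilter on $X$ contains $Z_{1}\cap X$. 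Thus the whole matter reduces to the single assertion that $Z_{1}\cap X$ is hard in $X$: granting this, Proposition \ref{prop18} makes every $z$-ultrafilter through $Z_{1}\cap X$ an $hz$-ultrafilter, contradicting $q\in\eta X$, whence $cl_{\beta X}Z_{1}=Z_{1}$ is closed in the compact space $\beta X$ and therefore compact.

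\textbf{The main obstacle.} By Remark \ref{rem12} the hardness of $Z_{1}\cap X$ comes down to the key transfer lemma: if $R$ is a realcompact cozero set of $\delta X$ then $S:=R\cap X$ is realcompact. This is the technical heart of the proof and the step I expect to be hardest. I would argue as follows. Write $R=coz(h)$ with $h\in C(\delta X)$; then $S=coz(h|_{X})$ is a cozero subset of $X$, hence $z$-embedded, and likewise $S$ is $z$-embedded in $R$. Let $\mathcal{U}$ be a real $z$-ultrafilter on $S$; its image $j^{\#}(\mathcal{U})$ under $S\hookrightarrow R$ is a prime $z$-filter on $R$ with the countable intersection property, so by realcompactness of $R$ and Theorem \ref{thm6} it is fixed at some $y\in R$. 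If $y\in S$ we are done, so the crux is to exclude $y\in R\bs X\subseteq\delta X\bs X$. Such a $y$ is a free $hz$-ultrafilter, hence hyperreal by Theorem \ref{thm20}, so some $f\in C(X)$ has $\{\,|f|\ge n\,\}\in y$ for every $n$. On the other hand, applying the realness of $\mathcal{U}$ to the bounded truncation $|f|\wedge(m+1)$ (which lies in $C^{*}(X)$ and so extends continuously to $\delta X$) produces an $m$ with $\{\,|f|\le m\,\}\cap S\in\mathcal{U}$; pushing this set forward through $j^{\#}$ and using that $y$ clusters $j^{\#}(\mathcal{U})$ forces $\{\,|f|\le m\,\}\in y$ as well. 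Then $\{\,|f|\le m\,\}$ and $\{\,|f|\ge m+1\,\}$ are disjoint members of the $z$-filter $y$, which is absurd. This incompatibility between $\mathcal{U}$ being real and its limit $y$ being a hyperreal $hz$-point of $\delta X\bs X$ is the delicate point I would write out most carefully; once it is secured, $y\in S$, so $\mathcal{U}$ is fixed and $S$ is realcompact, and since $Z_{1}\cap X\subseteq S$ is a zero set inside a realcompact cozero set, Remark \ref{rem12} gives its hardness and closes the argument.
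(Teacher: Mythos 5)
Your argument is correct in substance but follows a genuinely different route from the paper's. The paper verifies Definition \ref{def14} directly: writing $\delta X_{lc}$ for the set of points of $\delta X$ having compact neighbourhoods (a set containing $\delta X\setminus X$ by the preceding theorem), it takes $\delta X_{1}=cl_{\delta X}\delta X_{lc}$ and $\delta X_{2}=\delta X\setminus\delta X_{lc}$, and shows $\delta X_{1}$ is pseudocompact: an unbounded function would produce a $C$-embedded copy $N$ of $\mathbb{N}$ inside $\delta X_{lc}\cap X$, which then lies in a countable union of realcompact cozero ($z$-embedded) sets and is completely separated from the complement of that union, hence is hard, hence compact --- absurd; since $\delta X_{2}$ is regular closed and anti-locally realcompact, the decomposition witnesses near pseudocompactness. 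You instead run the Section 3 machinery inside $\delta X$ itself: Theorem \ref{thm19} reduces everything to the nonexistence of free $hz$-ultrafilters on $\delta X$, Theorem \ref{thm21} puts a hard zero set $Z_{1}\subseteq R$ ($R$ a realcompact cozero set of $\delta X$) into any such ultrafilter, and the contradiction comes from proving $Z_{1}$ compact. Your key new ingredient, absent from the paper, is the transfer lemma that $R\cap X$ is realcompact; combined with Remark \ref{rem12} and Proposition \ref{prop18} it forces $cl_{\beta X}Z_{1}\subseteq\delta X$ and hence $Z_{1}$ compact. The paper's proof is shorter because it exploits the already-established local compactness of $\delta X$ off $X$; yours shows that the $hz$-ultrafilter calculus is stable under passage to $\delta X$ and isolates a transfer principle (realcompactness and hardness reflect from $\delta X$ back to $X$) that has independent interest.

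Two steps need repair, both with tools you already have. First, the inference from $q\in cl_{\beta X}Z_{1}$ to $Z_{1}\cap X\in q$ requires $Z_{1}\cap X$ to be dense in $Z_{1}$, which can fail for an arbitrary zero set of $\delta X$ (a nonempty zero set of $\delta X$ may even miss $X$ entirely). It holds here because Theorem \ref{thm21} actually provides a \emph{regular} hard zero set: then $Z_{1}=cl_{\delta X}(int_{\delta X}Z_{1})$ and density of $X$ in $\delta X$ give $cl_{\beta X}Z_{1}=cl_{\beta X}(Z_{1}\cap X)$; so carry regularity through your argument. Second, in the crux of the transfer lemma, knowing $y\in\{p\in R:\bar{g}(p)\le m\}$ (where $\bar{g}\in C(\delta X)$ extends $|f|\wedge(m+1)$) does not by itself yield the membership $\{|f|\le m\}\in y$: a point of $\beta X$ can lie in the zero set of an extension while the trace of that zero set on $X$ fails to belong to the corresponding $z$-ultrafilter (take $X=\mathbb{N}$, $f(n)=m+1/n$, $y$ free). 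But you do not need membership: $y$ lying in that set gives $\bar{g}(y)\le m$, while $\{|f|\ge m+1\}\in y$ gives $y\in cl_{\delta X}\{|f|\ge m+1\}$, on which $\bar{g}\equiv m+1$ by continuity --- a direct numerical contradiction. Finally, the $C^{*}$-embedding of $X$ in $\delta X$ that you use to extend the truncation should be justified inside the paper's framework (for instance from Theorem \ref{thm22} together with normality of the compact Hausdorff space $\beta X$), since the paper deliberately avoids invoking the extension property of $\beta X$.
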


\begin{proof}
Let $\delta X_{lc}$ be the set of points in $\delta X$ having compact
neighbourhood. Then $\delta X\backslash X\subseteq\delta X_{lc}$.
As $\delta X_{lc}$ is an open subset of $\delta X,$ $\delta X_{lc}\cap X\neq\emptyset.$
And $cl_{\delta X}(\delta X_{lc}\cap X)\supset\delta X_{lc}.$ Hence
$cl_{\delta X}\delta X_{lc}=cl_{\delta X}(\delta X_{lc}\cap X).$
Let $\delta X_{1}=cl_{\delta X}\delta X_{lc}.$ Let $f\in C(\delta X)$
such that $f$ is unbounded on $\delta X_{1}.$ Then $f$ is unbounded
on $\delta X_{lc}\cap X$. Hence there exists a copy $N$ of $\mathbb{N}$,
$C$-embedded in $\delta X.$ Then each point of $N$ is contained
in realcompact cozero set. As countable union of realcompact cozero
sets (infact $z$-embedded sets) is realcompact cozero sets, $N$
is contained in a realcompact cozero set in $\delta X.$ As $N$ is
$C$-embedded in $\delta X,$ $N$ can be completely separated from
the complement of the realcompact cozero set. Hence $N$ is hard in
$\delta X$. Hence $N$ is compact which is absurd. So $\delta X_{lc}$
is relatively pseudocompact and hence $\delta X_{1}$ is pseudocompact.
Let $\delta X_{2}=\delta X\backslash\delta X_{lc}$is a regular closed
subset of $\delta X$ which is anti-locally realcompact. Clearly $int_{\delta X}(\delta X_{1}\cap\delta X_{2})=\emptyset.$
So $\delta X$ is nearly pseudocompact.
\end{proof}

\section{Uniqueness of $\delta X$ with respect to extension property}

In this section we shall show that the nearly pseudocompactification,
$\delta X$, of $X$ is unique, with respect to some properties. Prior
to that, we hereby introduce a subring $S(X)$ of $C(X)$ containing
$C^{*}(X)$.
\begin{defn}
Let $S(X)=\{f\in C(X):f$ is bounded on every hard set in $X\}$.
We call such $f$ as hard-bounded continuous map.
\end{defn}

Then clearly $C^{*}(X)\subset S(X)\subset C(X)$. Moreover as closed
subset of hard set is hard in $X$ and finite union of hard sets is
again a hard set, it follows that $S(X)$ is indeed a subring of $C(X)$.

We call a space $X$ to be $S$-embedded in $Y$ if every hard-bounded
continuous map on $X$ can be continuously extended to $Y$.
\begin{defn}
A continuous map $f:X\rightarrow Y$ is called $h2pc$ map if $f$
takes a hard set to a pre-compact set in $Y$. i.e. if $H$ is hard
in $X$ then $cl_{Y}f(H)$ is compact.
\end{defn}

\begin{example}
(i) Any hard map \cite{r76}(that takes hard set to a hard set ) from
a space to a nearly pseudocompact space is $h2pc$.

(ii) Every hard-bounded continuous map is $h2pc$: Infact, if $H$
is hard in $X$, $f$ is a hard-bounded continuous map, then $f(H)$
is bounded subset of $\mathds{R\Rightarrow}cl_{\mathds{R}}f(H)$ is
compact.

(iii) Then the inclusion map $i:X\rightarrow\delta X$ is also $h2pc$
map.

(iv) Any continuous map from a nearly pseudocompact space is also
$h2pc$ map: Infact if $H$ is hard in nearly pseudocompact space
$X$, $H$ is compact. Let $f:X\rightarrow Y$ be a continuous map.
Then $f(H)$ is compact $\Rightarrow cl_{Y}f(H)=f(H)$ is also compact.
So $f$ is $h2pc$.
\end{example}

\begin{thm}
Composition of two $h2pc$ map is also $h2pc$.
\end{thm}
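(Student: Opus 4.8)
The plan is to show that if $f:X\to Y$ and $g:Y\to Z$ are both $h2pc$ maps, then the composition $g\circ f:X\to Z$ is $h2pc$. By definition, I must take an arbitrary hard set $H$ in $X$ and verify that $cl_{Z}((g\circ f)(H))$ is compact in $Z$. The natural strategy is to apply the two hypotheses in sequence: first use that $f$ is $h2pc$ to control the image of $H$ under $f$, and then use that $g$ is $h2pc$ to control the image under $g$.

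First I would apply the hypothesis on $f$: since $H$ is hard in $X$ and $f$ is $h2pc$, the set $cl_{Y}(f(H))$ is compact in $Y$. Call this compact set $L=cl_{Y}(f(H))$. The next step is to recognize that $L$, being compact, is hard in $Y$. This is exactly the observation recorded earlier in the paper, namely that every compact set is hard (indeed Remark \ref{rem12} notes that a closed set completely separated from the complement of a realcompact set containing it is hard, with the compact case obtained by taking $K=\emptyset$; more directly, a compact set is trivially hard since one may take the compact witness $K$ to be the set itself). Thus $L$ is a hard set in $Y$.

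Now I would apply the hypothesis on $g$: since $L$ is hard in $Y$ and $g$ is $h2pc$, the set $cl_{Z}(g(L))$ is compact in $Z$. It remains to connect this to $cl_{Z}((g\circ f)(H))$. The key containment is $(g\circ f)(H)=g(f(H))\subseteq g(cl_{Y}(f(H)))=g(L)$, so $cl_{Z}((g\circ f)(H))\subseteq cl_{Z}(g(L))$, a closed subset of a compact set, hence compact.

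The argument is essentially a two-step funneling, and I do not anticipate a serious obstacle; the only point requiring care is the middle step, where one passes from ``$cl_{Y}(f(H))$ is compact'' to ``$cl_{Y}(f(H))$ is hard in $Y$,'' so that the second map's hypothesis can legitimately be invoked. This is precisely where the fact that compact sets are hard is indispensable, since the definition of $h2pc$ demands a \emph{hard} input set, not merely a compact one. Once that bridge is in place, the containment of images and the fact that a closed subset of a compact set is compact finish the proof cleanly.
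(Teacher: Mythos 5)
Your proof is correct, but it takes a genuinely different route from the paper's. The paper never invokes the $h2pc$ hypothesis on $g$ at all: it sets $L=cl_{Y}f(H)$, which is compact since $f$ is $h2pc$, and then uses only the \emph{continuity} of $g$ --- from $g(L)\subseteq cl_{Z}g(f(H))$ (continuity) together with the fact that $g(L)$ is compact, hence closed in the Hausdorff space $Z$ and contains $g(f(H))$, one gets the equality $cl_{Z}g(f(H))=g(L)$, which is compact. So the paper in fact proves the stronger statement that an $h2pc$ map followed by \emph{any} continuous map is $h2pc$ (compare Example (iv) in the same section). Your argument instead uses both hypotheses symmetrically, and its hinge is the bridge lemma that a compact set is hard, so that $L$ becomes a legitimate input for the $h2pc$ property of $g$; this parallels the pattern of Theorem \ref{thm35} (convert the intermediate set into one the second hypothesis accepts). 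That bridge is valid, but note that of your two justifications for it, only the second one works: Remark \ref{rem12} by itself does not give it, since a compact set need not be completely separated from the complement of a realcompact set containing it. The correct justification is the direct one you also give --- in Definition \ref{def11} take the witness $K$ to be the compact set $H$ itself, so that for every open neighbourhood $V$ of $K$ the set $H\bs V$ is empty and is trivially completely separated from the complement of any realcompact subset. With that reading, your proof is complete; the paper's is shorter, needs no auxiliary lemma, and yields a sharper conclusion.
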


\begin{proof}
Let $f:X\rightarrow Y$ and $g:Y\rightarrow Z$ be $h2pc$ maps. Let
$H$ be a hard set in $X$. Then $cl_{Y}f(H)$ is compact in $Y$.
By continuity, $g(cl_{Y}f(H))\subseteq cl_{Z}g(f(H))$. Now we have
$g(f(H))\subseteq g(cl_{Y}f(H))\subseteq cl_{Z}g(f(H))$. But $g(cl_{Y}f(H))$
is compact. Hence $g(cl_{Y}f(H))=cl_{Z}g(f(H))\Rightarrow gf$ is
$h2pc$ map.
\end{proof}
\begin{thm}
\label{thm35}Let $f:X\to Y$be a hard map and $g:Y\to Z$ be an $h2pc$
map. Then $g\circ f:X\to Z$ is an $h2pc$ map.
\end{thm}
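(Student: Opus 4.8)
The plan is to argue directly from the two definitions, exactly as in the preceding composition theorem but exploiting the stronger hypothesis on the first map. The point is that a hard map does not merely return a precompact image; it returns a genuinely hard set, which is precisely the kind of input the $h2pc$ condition on $g$ is designed to consume. So there is no intermediate closure to chase through, and the argument is in fact cleaner than the one for the composition of two $h2pc$ maps.

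Concretely, I would start by fixing an arbitrary hard set $H$ in $X$. Since $f:X\to Y$ is a hard map, by definition $f(H)$ is hard in $Y$. Now I would invoke the hypothesis on $g$: because $g:Y\to Z$ is $h2pc$ and $f(H)$ is a hard subset of $Y$, the set $cl_{Z}\,g(f(H))$ is compact in $Z$. The only remaining observation is the set-theoretic identity $(g\circ f)(H)=g(f(H))$, whence
\[
cl_{Z}\big((g\circ f)(H)\big)=cl_{Z}\,g(f(H)),
\]
which we have just seen is compact. Thus $g\circ f$ carries the hard set $H$ to a precompact set in $Z$, and since $H$ was arbitrary, $g\circ f$ is $h2pc$.

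I do not anticipate any genuine obstacle here. The one place to be slightly careful is to apply the definitions in the correct order: one must use that $f$ is a \emph{hard} map (not merely $h2pc$) so that $f(H)$ is literally hard in $Y$ and therefore qualifies as a legitimate argument for the $h2pc$ property of $g$; had $f$ been only $h2pc$, $f(H)$ need not be hard and the step would fail. Everything else is the immediate composition identity, so no calculation beyond invoking the definitions is required.
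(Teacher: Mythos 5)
Your proof is correct and is exactly the argument the paper has in mind: the paper states only that ``the proof of the above theorem is trivial,'' and the trivial proof is precisely your two-step application of the definitions ($f(H)$ is hard in $Y$ because $f$ is a hard map, hence $cl_{Z}\,g(f(H))=cl_{Z}\,\bigl((g\circ f)(H)\bigr)$ is compact because $g$ is $h2pc$). Your closing remark correctly identifies why the hardness of $f$ (rather than mere $h2pc$-ness) is what makes this one-line argument work, in contrast to the closure-chasing needed in the paper's preceding theorem on composing two $h2pc$ maps.
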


Though the proof of the above theorem is trivial, its importance is
reflected in the last paragaraph of this section.
\begin{thm}
Let $X$ be dense in $T$ and each point of $T\bs X$ is in $T$-closure
of a hard set in $X$. Then each point of $T\bs X$ is in the $T$-closure
of hard zero set (equivalently regular hard zero set) in $X$.
\end{thm}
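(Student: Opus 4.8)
The plan is to use the compact set witnessing hardness to peel the point $p$ away from the ``bad'' part of $H$, and then to invoke the fact that, away from this compact set, $H$ is trapped inside a regular hard zero set.

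First I would fix a point $p\in T\bs X$ together with a hard set $H$ in $X$ satisfying $p\in cl_{T}(H)$. By Theorem \ref{thm13} I may choose a compact set $K\subseteq H$ that satisfies the property of hardness. Since compactness is an absolute property, $K$ is compact as a subset of $T$ as well; and because $K\subseteq X$ while $p\notin X$, we have $p\notin K$. As $T$ is Tychonoff, hence Hausdorff, a point can be separated from a disjoint compact set, so there are disjoint $T$-open sets $U$ and $V$ with $p\in U$ and $K\subseteq V$. Put $G=V\cap X$, which is an $X$-open neighbourhood of $K$.

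Next I would apply the property of hardness to the neighbourhood $G$ of $K$: the set $H\bs G$ is completely separated from the complement of some realcompact set, and by the equivalent reformulation of hardness (condition (3) of the equivalence theorem immediately following Theorem \ref{thm13}) there is a regular hard zero set $Z$ in $X$ with $H\bs G\subseteq Z$. The key step is then to verify that $p\in cl_{T}(H\bs G)$. Given an arbitrary $T$-neighbourhood $N$ of $p$, I may shrink it so that $N\subseteq U$; since $p\in cl_{T}(H)$, the set $N$ meets $H$ in some point $x$. Then $x\in U$, so $x\notin V$ because $U\cap V=\emptyset$, and since $x\in H\subseteq X$ this forces $x\notin V\cap X=G$; thus $x\in(H\bs G)\cap N$. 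Hence every $T$-neighbourhood of $p$ meets $H\bs G$, so $p\in cl_{T}(H\bs G)\subseteq cl_{T}(Z)$, and $Z$ is the required (regular) hard zero set.

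The only delicate point is the interplay of the separation step with this final closure argument: one must ensure that the neighbourhood $G$ used to shrink $H$ is manufactured from a $T$-open set $V$ that is kept disjoint from a neighbourhood $U$ of $p$, so that discarding $H\cap G$ cannot pull $p$ out of the closure. Everything else — the absoluteness of compactness, the Hausdorff separation of $p$ from the compact set $K$, and the passage from ``completely separated from the complement of a realcompact set'' to an actual regular hard zero set — is routine given the results already established.
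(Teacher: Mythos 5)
Your proof is correct and takes essentially the same route as the paper's: choose the compact set $K\subseteq H$ witnessing hardness (Theorem \ref{thm13}), separate $p$ from $K$ by disjoint $T$-open sets $U,V$, apply the hardness property to the neighbourhood $V\cap X$ to trap $H\setminus V$ inside a regular hard zero set $Z$, and conclude $p\in cl_{T}(H\setminus V)\subseteq cl_{T}Z$. The only cosmetic difference is that you spell out the neighbourhood argument for $p\in cl_{T}(H\setminus G)$ directly, where the paper argues via $p\notin cl_{T}(V\cap X)$; the two are the same observation.
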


\begin{proof}
Let $p\in T\bs X$ and $H$ be a hard set in $X$ such that $p\in cl_{T}H$.
Let $K$ be the compact subset of $H$ satisfying the property of
hardness. As $p\notin K$, $\exists$ open sets $U$, $V$ in $T$
such that $p\in U$ and $K\subset V$ with $U\cap V=\phi$. Since
$K$ satisfies the property of hardness, there exist a zero set $Z_{1}$
and a cozero set $X\bs Z_{2}$ in $X$ where $Z_{2}$, a zero set,
such that $H\bs V\subset Z_{1}\subset X\bs Z_{2}$ and $X\bs Z_{2}$
is realcompact. This implies that $Z_{1}$ is a hard zero set in $X$.
(Infact, here we may even choose $Z_{1}$ to be regular zero set.
Then $Z_{1}$ becomes a regular hard zero set). Now $p\notin cl_{T}(V\cap X)$
as $U\cap V=\phi$, but $p\in cl_{T}H.$ So $p\in cl_{T}(H\bs V)$.
Hence $p\in cl_{T}Z_{1}$.
\end{proof}
\begin{thm}
Suppose $X$ is dense in $T$ such that every point $p\in T\bs X$
is in the $T$-closure of some hard set in $X$. Then the followings
are equivalent.
\end{thm}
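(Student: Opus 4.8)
The plan is to prove all the equivalences by constructing an explicit homeomorphism between $T$ and $\delta X$ fixing $X$ pointwise, and then showing that each listed condition is exactly what makes that map well defined, bijective and bicontinuous. The construction mimics the classical identification of a compactification with $\beta X$ via trace $z$-ultrafilters (Chapter~6 of \cite{gj60}), but carried out with hard zero sets in place of arbitrary zero sets, so that Proposition~\ref{prop18} and Theorems~\ref{thm22}, \ref{thm23}, \ref{thm24} become the working tools.

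First I would associate to each $p\in T\bs X$ its trace filter $A_{p}=\{Z\in Z(X):p\in cl_{T}Z\}$. By the preceding theorem, $p$ lies in the $T$-closure of a hard zero set, so $A_{p}$ contains a hard zero set; by Proposition~\ref{prop18} every $z$-filter containing $A_{p}$ is then an $hz$-filter. The first key step is to show, using the separation hypothesis (disjoint hard zero sets having disjoint $T$-closures, equivalently $X$ being $S$-embedded in $T$), that $A_{p}$ is a prime $z$-filter extending to a unique $hz$-ultrafilter $q_{p}$; this defines $\Phi:T\to\delta X$ with $\Phi|_{X}=\mathrm{id}$. For surjectivity I would take a free $hz$-ultrafilter $q\in HX$, invoke Theorem~\ref{thm21} to obtain a regular hard zero set $Z_{0}\in q$, and use compactness of $cl_{T}Z_{0}$ to produce a cluster point $p\in T\bs X$ of the trace of $q$; the separation condition then forces $q_{p}=q$.

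Next I would verify that $\Phi$ is a homeomorphism. Since $\{cl_{T}Z:Z\in Z(X)\}$ and $\{cl_{\delta X}Z:Z\in Z(X)\}$ are bases for the closed sets of $T$ and $\delta X$ respectively, it suffices to establish $\Phi(cl_{T}Z)=cl_{\delta X}Z$ for every hard zero set $Z$, which reduces to the equivalence $p\in cl_{T}Z\iff Z\in q_{p}$; continuity in both directions then follows formally from the definition of $A_{p}$ together with Theorem~\ref{thm22} on the $\delta X$ side and its $T$-analogue furnished by the hypotheses. The remaining implications among the listed conditions I would close by the standard complete-separation argument: two disjoint hard zero sets are completely separated by a member of $S(X)$, whose extension to $T$ witnesses disjointness of closures, and conversely disjoint closures in $T$ yield the extension.

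The main obstacle will be proving that $A_{p}$ genuinely determines a single $hz$-ultrafilter and that $\Phi$ is injective; this is precisely where the hypothesis that every point of $T\bs X$ sits in the $T$-closure of a hard set is indispensable, since it anchors $A_{p}$ by a hard zero set and, once disjoint hard zero sets are known to have disjoint $T$-closures, prevents $A_{p}$ from splitting between two distinct $hz$-ultrafilters. The second delicate point is surjectivity: it rests on $cl_{T}Z$ being compact for hard $Z$ (Theorem~\ref{thm23} on the $\delta X$ side), which is exactly the clause ruling out $T$ being a proper quotient or a strict subspace of $\delta X$.
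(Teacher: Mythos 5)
Your overall strategy cannot work, because the five conditions do \emph{not} characterize $\delta X$: they are not equivalent to ``$T$ is homeomorphic to $\delta X$ by a map fixing $X$,'' so no argument whose conclusion is such a homeomorphism can prove the theorem. Concretely, take any intermediate extension $X\subseteq T\subsetneq\delta X$ with the subspace topology, e.g.\ $T=X\cup\{q\}$ for a single free $hz$-ultrafilter $q$ (for $X=\mathbb{R}$ one has $\delta\mathbb{R}=\beta\mathbb{R}$, every closed set is hard, and one may take $T=\mathbb{R}\cup\{q\}$ with $q\in\beta\mathbb{R}\setminus\mathbb{R}$). Then $X$ is dense in $T$, the point $q$ lies in the $T$-closure of any hard zero set belonging to $q$, and all of (1)--(5) hold in $T$: (3), (4) and (5) pass down from $\delta X$ (Theorem \ref{thm22} and convergence of $q$ to itself), and (1), (2) follow by extending to $\delta X$ and restricting. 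Yet $T$ is not homeomorphic to $\delta X$ by any map fixing $X$ pointwise, since such a map would carry the single point of $T\setminus X$ onto all of $\delta X\setminus X$. The precise step of yours that breaks is surjectivity: you invoke ``compactness of $cl_{T}Z_{0}$'' for a hard zero set $Z_{0}$, but nothing in the hypotheses or in (1)--(5) makes $T$-closures of hard sets compact --- that is exactly the \emph{additional} hypothesis imposed in the corollary following this theorem in the paper, and it is what separates $\delta X$ from the intermediate extensions above. Without it, the trace of a free $hz$-ultrafilter need not cluster in $T$ at all.

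There is a second, independent gap: even if a homeomorphism $T\cong\delta X$ were available, condition (1) --- continuous extendability to $T$ of \emph{every} $h2pc$ map into an \emph{arbitrary} space $Y$ --- would not follow formally; you would still need to prove the extension property for $\delta X$ itself. In the paper this is the implication $(5)\Rightarrow(1)$, proved by pushing the convergent fixed or $hz$-ultrafilter $\mathcal{U}$ forward to the prime $z$-filter $\tau^{\#}(\mathcal{U})$ on $Y$ and observing that the sets $cl_{Y}\tau(H_{Z})$, where $H_{Z}\in\mathcal{U}$ is a hard zero set with $H_{Z}\subseteq\tau^{-1}(Z)$, are compact (this is exactly where the $h2pc$ hypothesis enters) and have the finite intersection property, so that $\tau^{\#}(\mathcal{U})$ is fixed and $\tau^{*}(p)$ can be defined as its limit. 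Your proposal never touches this compactness argument on the $Y$ side. The paper's actual structure is the cycle $(1)\Rightarrow(2)\Rightarrow(3)\Rightarrow(4)\Rightarrow(5)\Rightarrow(1)$, where $(1)\Rightarrow(2)$ is trivial, $(2)\Rightarrow(3)\Rightarrow(4)$ are classical $C^{*}$-embedding facts, and $(4)\Rightarrow(5)$ is essentially your trace-filter idea: the zero-set neighbourhood traces at $p$ together with a hard zero set through $p$ generate a $z$-ultrafilter, which is $hz$ by Proposition \ref{prop18}, and is unique and convergent by (4). That portion of your sketch is sound; the repair is to discard the homeomorphism, keep the trace-filter argument as $(4)\Rightarrow(5)$, and supply the filter-pushforward argument for $(5)\Rightarrow(1)$.
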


\begin{enumerate}
\item If $\tau$ is a $h2pc$ map from a space $X$ to $Y$ then $\tau$
can be continuously extended upto $T$.
\item $X$ is $S$-embedded in $T$.
\item For any two zero sets $Z_{1}$, $Z_{2}$ in $X$, $Z_{1}\cap Z_{2}=\phi\Rightarrow cl_{T}Z_{1}\cap cl_{T}Z_{2}=\phi$.
\item For any two zero sets $Z_{1}$, $Z_{2}$ in $X$, $cl_{T}(Z_{1}\cap Z_{2})=cl_{T}Z_{1}\cap cl_{T}Z_{2}$.
\item Each point of $T$ is the limit of either a fixed $z$-ultrafilter
or a $hz$-ultrafilter on $X$.
\end{enumerate}
\begin{proof}
$(1)\Rightarrow(2):$ Trivial as every hard-bounded continuous map
is $h2pc$ map.

$(2)\Rightarrow(3):$ Trivially follows from Theorem 6.4 \cite{hr80}
as $C^{*}(X)\subset S(X)$.

$(3)\Rightarrow(4):$ Trivially follows from Theorem 6.4 \cite{hr80}.

$(4)\Rightarrow(5):$ Let $p\in T$. If $p\in X$, then $p$ is the
limit of unique fixed $z$-ultrafilter $A_{p}=\{Z\in Z[X]:p\in Z\}$.
So suppose now, $p\in T\bs X$. There exists a hard zero set $H$
in $X$ such that $p\in cl_{T}H$. Let $\mathcal{N}_{p}$ be the family
of all zero set neighbourhoods of $p$. Then $\{Z\cap X:Z\in\mathcal{N}_{p}\}\cup\{H\}$
satisfies finite intersection property. Hence can be extended to a
$z$-ultrafilter $\mathcal{U}$ on $X$. As $H\in\mathcal{U}$, $\mathcal{U}$
is then a $hz$-ultrafilter. That $\mathcal{U}$ is unique and converges
to $p$, follows the same line of arguments as given in Theorem 6.4
\cite{hr80}.

$(5)\Rightarrow(1):$ Let $\tau:X\rightarrow Y$ be a $h2pc$ map.
Let $p\in T$. Then $\mathcal{U}\rightarrow p$ where $\mathcal{U}$
is either a fixed $z$-ultrafilter on $X$ or $\mathcal{U}$ is a
$hz$-ultrafilter on $X$ by $(5)$. Now $\tau^{\#}(\mathcal{U})$
is a prime $z$-filter on $Y$. If $\tau^{\#}(\mathcal{U})$ is fixed
then there exists $y_{p}\in Y$ such that $\bigcap\tau^{\#}(\mathcal{U})=\{y_{p}\}$.
Then we shall define $\tau^{*}(p)=y_{p}$. Now if $\mathcal{U}$ is
fixed then $p\in X$ and $\mathcal{U}=A_{p}$. $Z\in\tau^{\#}(\mathcal{U})\Leftrightarrow\tau^{-1}(Z)\in A_{p}\Leftrightarrow p\in\tau^{-1}(Z)\Leftrightarrow\tau(p)\in Z.$
So $\tau(p)\in\bigcap\tau^{\#}(\mathcal{U})\Rightarrow\tau^{*}(p)=\tau(p)$.
This also shows that $\tau^{*}$ is an extension of $\tau$. Now suppose
$p\in T\bs X$, then there exists unique $hz$-ultrafilter $\mathcal{U}$
on $X$ which converges to $p$. Let $Z\in\tau^{\#}(\mathcal{U})\Rightarrow\tau^{-1}(Z)\in\mathcal{U}\Rightarrow$there
exists a hard zero set $H_{Z}$ in $\mathcal{U}$ such that $H_{Z}\subset\tau^{-1}(Z)\Rightarrow\tau(H_{Z})\subseteq Z\Rightarrow cl_{Y}(\tau(H_{Z}))\subseteq Z$.
Now $cl_{Y}\tau(H_{Z})$ is compact and $\{cl_{Y}\tau(H_{Z}):Z\in\tau^{\#}(\mathcal{U})\}$
satisfies finite intersection property. Hence $\bigcap_{Z\in\mathcal{U}}cl_{Y}\tau(H_{Z})\neq\phi$.
Clearly $\bigcap_{Z\in\mathcal{U}}cl_{Y}\tau(H_{Z})\subseteq\bigcap\tau^{\#}(\mathcal{U})$.
So $\bigcap\tau^{\#}(\mathcal{U})\neq\phi$. Let $y_{p}\in\tau^{\#}(\mathcal{U})\Rightarrow\tau^{\#}(\mathcal{U})=\{y_{p}\}.$
So we have defined $\tau^{*}:\delta X\rightarrow Y$ which is an extension
of $\tau$. That $\tau^{*}$ is continuous, follows again the same
line of arguments given in Theorem 6.4 \cite{hr80}.
\end{proof}
By construction of $\delta X$, it is clear that $\delta X$ is precisely
the collection of limits of all fixed $z$-ultrafilter and $hz$-ultrafilter
on $X$. Further if $p\in\delta X$, there exists a compact neighbourhood
$K$ of $p$ in $\delta X$. Then $K\cap X$ is hard in $X$ and $p\in cl_{\delta X}(K\cap X)$.
So from the previous theorem we conclude the following theorem.
\begin{thm}
\label{thm39}For each space $X$, there exists a nearly pseudocompactification
$\delta X$ of $X$ which satisfies the following equivalent properties.
\end{thm}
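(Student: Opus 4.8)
The final Theorem \ref{thm39} asserts that for each space $X$ there exists a nearly pseudocompactification $\delta X$ satisfying ``the following equivalent properties.'' This is a summary or packaging theorem: it collects the constructed object $\delta X$ together with the list of equivalent characterizations proved in the immediately preceding theorem. So the proof will not require any genuinely new argument; instead it must (i) invoke the already-established fact that $\delta X$ is nearly pseudocompact, and (ii) verify that $\delta X$, with the inclusion $X\hookrightarrow\delta X$, meets the hypotheses of the previous equivalence theorem, so that its five equivalent conditions apply to $T=\delta X$.

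**Plan of proof.**

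The plan is to set $T=\delta X$ and check that the pair $(X,\delta X)$ satisfies the standing hypothesis of the preceding theorem, namely that $X$ is dense in $\delta X$ and every point of $\delta X\bs X$ lies in the $\delta X$-closure of some hard set in $X$. Density of $X$ in $\delta X$ is part of the construction (the sets $\overline{Z}=cl_{\delta X}Z$ form a closed base and $X$ is dense). For the second half, I would use the theorem stating that each $p\in\delta X\bs X$ has a compact neighbourhood $K$ in $\delta X$: then $K\cap X$ is hard in $X$ by Theorem \ref{thm24}, and since $p\in int_{\delta X}K\subseteq cl_{\delta X}(K\cap X)$ (density again), $p$ lies in the $\delta X$-closure of the hard set $K\cap X$. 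This is exactly the hypothesis needed.

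Having verified the hypotheses, I would then simply note that the previous theorem's five equivalent statements hold with $T=\delta X$, and separately recall that $\delta X$ has already been shown to be nearly pseudocompact and to contain $X$ as a dense subspace, i.e. it is a nearly pseudocompactification of $X$. The statement that it ``satisfies the following equivalent properties'' is then immediate: $\delta X$ is one admissible choice of the extension $T$ in the earlier theorem, so all of the listed equivalent conditions are available for it.

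**The main obstacle.**

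The only real step requiring care is confirming the covering hypothesis ``every point of $\delta X\bs X$ is in the $\delta X$-closure of a hard set in $X$,'' since everything else is bookkeeping. This hinges on combining the compact-neighbourhood result for points of $\delta X\bs X$ with Theorem \ref{thm24} (compact sets in $\delta X$ meet $X$ in hard sets) and on the density of $X$ to conclude $p\in cl_{\delta X}(K\cap X)$. I expect no genuine difficulty here beyond stitching these earlier results together; the theorem is essentially a restatement, so the proof is a short verification that $\delta X$ is an instance of the abstract extension $T$ treated previously.
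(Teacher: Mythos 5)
Your reduction to the preceding equivalence theorem, and your verification of its hypotheses, is exactly the paper's argument: $X$ is dense in $\delta X$ by construction, each $p\in\delta X\setminus X$ has a compact neighbourhood $K$ in $\delta X$, the set $K\cap X$ is hard in $X$ by Theorem \ref{thm24}, and $p\in cl_{\delta X}(K\cap X)$ by density. The gap is in your final step. Satisfying the hypotheses of the preceding theorem only makes conditions (1)--(5) \emph{mutually equivalent} for $T=\delta X$; it does not make any of them true. ``Being an admissible choice of $T$'' is not enough. Concretely: for $X=\mathbb{N}$ discrete, every subset is hard (take $K=\emptyset$ and $P=\mathbb{N}$ in the definition of hardness, $\mathbb{N}$ being realcompact), so the one-point compactification $T=\mathbb{N}\cup\{\infty\}$ also satisfies the standing hypotheses, since $\infty\in cl_{T}\mathbb{N}$ and $\mathbb{N}$ is hard in itself; yet condition (3) fails for this $T$, because the evens and the odds are disjoint zero sets whose $T$-closures both contain $\infty$. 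For that admissible $T$ all five conditions fail together, so the inference ``hypotheses hold, hence the five conditions hold'' is invalid as stated.

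What is missing is a direct verification of at least one of the five conditions for $T=\delta X$, after which the equivalence delivers the remaining ones. This is cheap, but it must be said. The paper supplies it by observing that, by construction, $\delta X$ is precisely the collection of limits of all fixed $z$-ultrafilters and $hz$-ultrafilters on $X$ (each point of $\delta X$ \emph{is} such an ultrafilter and converges, in the closed-base topology, to itself), which is condition (5). Alternatively, conditions (3) and (4) for $T=\delta X$ are exactly the content of Theorem \ref{thm22}, proved earlier. Adding one sentence of this kind closes the gap; the rest of your proposal coincides with the paper's proof.
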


\begin{enumerate}
\item Any $h2pc$ map from a space $X$ to $Y$ has continuous extension
upto $\delta X$.
\item $X$ is $S$-embedded in $\delta X$.
\item For any two zero sets $Z_{1},$$Z_{2}$ in $X$, $Z_{1}\cap Z_{2}=\phi\Rightarrow cl_{\delta X}Z_{1}\cap cl_{\delta X}Z_{2}=\phi$.
\item For any two zero sets $Z_{1},$$Z_{2}$ in $X$, $cl_{\delta X}(Z_{1}\cap Z_{2})=cl_{\delta X}Z_{1}\cap cl_{\delta X}Z_{2}$.
\item Every fixed $z$-ultrafilter and $hz$-ultrafilter converges in $\delta X$.
\end{enumerate}
Next theorem shows that $\delta X$ is unique with respect to certain
properties.
\begin{cor}
Let $X$ be dense in $Y$ such that any $h2pc$ map on $X$ has continuous
extension to $Y$ and $Y$-closure of each hard set in $X$ is compact
in $Y$, then $Y$ is homeomorphic with $\delta X$.
\end{cor}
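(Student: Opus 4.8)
The plan is to prove uniqueness by exhibiting a homeomorphism between $Y$ and $\delta X$ using the universal property established in Theorem \ref{thm39}. The key observation is that both $Y$ and $\delta X$ satisfy symmetric hypotheses: each contains $X$ densely, each is a target to which every $h2pc$ map on $X$ extends, and in each the closure of a hard set in $X$ is compact. Since the inclusion $i\colon X\to\delta X$ is $h2pc$ (Example (iii)) and by hypothesis $Y$ admits continuous extensions of all $h2pc$ maps, I would first extend $i$ to a continuous map $\phi\colon Y\to\delta X$ fixing $X$ pointwise. Symmetrically, the inclusion $j\colon X\to Y$ is $h2pc$ by the same reasoning, and since $\delta X$ satisfies property (1) of Theorem \ref{thm39}, $j$ extends to a continuous map $\psi\colon\delta X\to Y$ fixing $X$ pointwise.

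Next I would argue that $\phi$ and $\psi$ are mutually inverse. The composite $\psi\circ\phi\colon Y\to Y$ is a continuous map agreeing with the identity on the dense subset $X$; since $Y$ is Hausdorff (being Tychonoff) and $X$ is dense in $Y$, two continuous maps into a Hausdorff space that agree on a dense set are equal, so $\psi\circ\phi=\mathrm{id}_Y$. The same argument applied to $\phi\circ\psi\colon\delta X\to\delta X$, which agrees with the identity on the dense set $X$ inside the Hausdorff space $\delta X$, gives $\phi\circ\psi=\mathrm{id}_{\delta X}$. Hence $\phi$ is a continuous bijection with continuous inverse $\psi$, i.e. a homeomorphism.

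The step that needs genuine care is verifying that the extensions $\phi$ and $\psi$ actually exist and are well defined, rather than the formal inverse computation. For $\psi$ this is immediate from Theorem \ref{thm39}(1) applied to the $h2pc$ inclusion $j\colon X\to Y$. For $\phi$ the subtlety is that one must invoke the hypothesis on $Y$, not Theorem \ref{thm39}: the statement ``any $h2pc$ map on $X$ has continuous extension to $Y$'' is precisely what lets us extend $i\colon X\to\delta X$. Here one should check that $i$ is indeed $h2pc$, which is exactly Example (iii), and that the extra hypothesis that the $Y$-closure of each hard set in $X$ is compact is what makes $Y$ eligible as a codomain in the symmetric role played by $\delta X$ in the previous theorem. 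The main obstacle, then, is confirming that the hypotheses on $Y$ are strong enough to run the extension argument in both directions; once that symmetry is secured, the density-and-Hausdorff uniqueness of extensions finishes the proof with no further computation.

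\begin{proof}
By Example (iii), the inclusion $i\colon X\to\delta X$ is an $h2pc$ map. Since by hypothesis every $h2pc$ map on $X$ extends continuously to $Y$, and since the $Y$-closure of each hard set in $X$ is compact, $Y$ satisfies the hypotheses of the previous theorem; in particular $i$ extends to a continuous map $\phi\colon Y\to\delta X$ with $\phi|_X=\mathrm{id}_X$. Symmetrically, the inclusion $j\colon X\to Y$ is $h2pc$, and since $\delta X$ satisfies property (1) of Theorem \ref{thm39}, $j$ extends to a continuous map $\psi\colon\delta X\to Y$ with $\psi|_X=\mathrm{id}_X$.

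Now $\psi\circ\phi\colon Y\to Y$ is continuous and agrees with $\mathrm{id}_Y$ on $X$. As $X$ is dense in the Hausdorff space $Y$, and two continuous maps into a Hausdorff space that coincide on a dense subset are equal, we obtain $\psi\circ\phi=\mathrm{id}_Y$. Likewise $\phi\circ\psi\colon\delta X\to\delta X$ is continuous and agrees with $\mathrm{id}_{\delta X}$ on the dense subset $X$ of the Hausdorff space $\delta X$, so $\phi\circ\psi=\mathrm{id}_{\delta X}$. Therefore $\phi$ is a continuous bijection with continuous inverse $\psi$, and hence $Y$ is homeomorphic with $\delta X$.
\end{proof}
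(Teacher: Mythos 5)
Your proof is correct and follows essentially the same route as the paper: both extend the inclusion $X\to\delta X$ to a map on $Y$ (using the corollary's hypothesis) and the inclusion $X\to Y$ to a map on $\delta X$ (using Theorem \ref{thm39}, with the compactness-of-closures hypothesis making that inclusion $h2pc$), then conclude the composites are identities since they fix the dense set $X$ in a Hausdorff space. Your write-up is in fact more explicit than the paper's about the density-plus-Hausdorff step; the only blemish is the superfluous (and not really needed) claim that ``$Y$ satisfies the hypotheses of the previous theorem,'' since the extension of $i$ follows directly from the corollary's own hypothesis.
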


\begin{proof}
We note that the inclusion map $i_{\delta X}:X\rightarrow\delta X$
is $h2pc$ map and $\delta X$-closure of hard set in $X$ is compact.
As per the conditions, the inclusion map $i_{Y}:X\rightarrow Y$ is
also a $h2pc$ map. Now $\exists$ $f:\delta X\rightarrow Y$ and
$g:Y\rightarrow\delta X$ such that $f|_{X}=I_{X}$ and $g|_{X}=I_{X}$.
Then $f\circ g=I_{Y}$ and $g\circ f=I_{\delta X}\Rightarrow f$ is
a homeomorphism. So $Y$ is homeomorphic with $\delta X$.
\end{proof}
\begin{cor}
If $f:X\to Y$ be a hard map from a space $X$ to a nearly pseudocompact
space $Y,$then $f$ can be uniquely extended to a continuous map
$f^{*}:\delta X\to Y$.
\end{cor}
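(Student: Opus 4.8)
The plan is to reduce the statement to the extension property already established in Theorem \ref{thm39}. The crucial observation is that a hard map into a nearly pseudocompact space is automatically an $h2pc$ map. Indeed, let $H$ be any hard set in $X$. Since $f$ is a hard map, $f(H)$ is hard in $Y$; and since $Y$ is nearly pseudocompact, Theorem \ref{thm15} guarantees that every hard set in $Y$ is compact, so $f(H)$ is compact and therefore closed in $Y$. Consequently $cl_{Y}f(H)=f(H)$ is compact, which is exactly the defining condition for $f$ to be $h2pc$. This is precisely the content of Example (i), so the first step is simply to record that $f$ is $h2pc$.

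Having recognized $f$ as an $h2pc$ map from $X$ to $Y$, I would invoke property (1) of Theorem \ref{thm39}: any $h2pc$ map from $X$ to a space $Y$ admits a continuous extension to $\delta X$. Applying this to $f$ yields a continuous map $f^{*}:\delta X\to Y$ with $f^{*}|_{X}=f$, which establishes existence.

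For uniqueness I would use that $X$ is densely embedded in $\delta X$ together with the fact that $Y$, being Tychonoff, is Hausdorff. Any two continuous maps $\delta X\to Y$ agreeing on the dense subset $X$ must agree on all of $\delta X$, so the extension $f^{*}$ is unique.

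Since each ingredient is furnished by a result stated earlier, I do not expect a genuine obstacle. The only point requiring any care is the opening reduction, namely verifying that the hardness of the image forces $h2pc$ behaviour; but this is immediate once one recalls that in a nearly pseudocompact space the hard sets are exactly the compact sets.
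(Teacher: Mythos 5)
Your proposal is correct and follows exactly the paper's route: recognize (as in Example (i)) that a hard map into a nearly pseudocompact space is $h2pc$ because hard sets in such a space are compact by Theorem \ref{thm15}, then invoke property (1) of Theorem \ref{thm39} to get the extension. Your explicit uniqueness argument (density of $X$ in $\delta X$ plus $Y$ Hausdorff) is a detail the paper leaves tacit, but it is the standard and intended justification.
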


\begin{proof}
It trivially follows from the theorem \ref{thm39}, as any hard map
from a space to a nearly pseudocompact space is $h2pc$ map.
\end{proof}
Acknowledgements: The authors sincerely acknowledge the support received
from DST FIST programme (File No. SR/FST/MSII/2017/10(C))

\end{document}